\DeclareSymbolFont{cyrletters}{OT2}{wncyr}{m}{n}
\DeclareMathSymbol{\Sha}{\mathalpha}{cyrletters}{"58}
\def\@tocline#1#2#3#4#5#6#7{\relax
  \ifnum #1>\c@tocdepth % then omit
  \else
    \par \addpenalty\@secpenalty\addvspace{#2}%
    \begingroup \hyphenpenalty\@M
    \@ifempty{#4}{%
      \@tempdima\csname r@tocindent\number#1\endcsname\relax
    }{%
      \@tempdima#4\relax
    }%
    \parindent\z@ \leftskip#3\relax \advance\leftskip\@tempdima\relax
    \rightskip\@pnumwidth plus4em \parfillskip-\@pnumwidth
    #5\leavevmode\hskip-\@tempdima
      \ifcase #1
       \or\or \hskip 1em \or \hskip 2em \else \hskip 3em \fi%
      #6\nobreak\relax
    \dotfill\hbox to\@pnumwidth{\@tocpagenum{#7}}\par
    \nobreak
    \endgroup
  \fi}
\newenvironment{psmallmatrix}
  {\left(\begin{smallmatrix}}
  {\end{smallmatrix}\right)}
\numberwithin{equation}{subsection}
\newtheorem{thmx}{Theorem}
\newenvironment{ccoro}[1]
  {\innercustomthm}
  {\endinnercustomthm}
\newtheorem{theorem}[subsection]{Theorem}
\newtheorem{lemma}[subsection]{Lemma}
\newtheorem*{coron}{Corollary}
\newtheorem{prop}[subsection]{Proposition}
\theoremstyle{definition}
\newtheorem{defn}[subsection]{Definition}
\theoremstyle{remark}
\newtheorem{remark}[subsection]{Remark}
\newtheorem{exam}[subsubsection]{Example}
\newtheorem{nonexam}[subsubsection]{Non-example}
\newtheorem{almexam}[subsubsection]{Potential example}
\newcommand{\mZ}{\mathbf{Z}}
\newcommand{\mR}{\mathbf{R}}
\newcommand{\mQ}{\mathbf{Q}}
\newcommand{\mF}{\mathbf{F}}
\newcommand{\mC}{\mathbf{C}}
\newcommand{\mN}{\mathbf{N}}
\newcommand{\cO}{\mathcal{O}}
\newcommand{\fW}{\mathfrak{W}}
\newcommand{\ff}{\mathfrak{f}}
\newcommand{\fp}{\mathfrak{p}}
\newcommand{\fm}{\mathfrak{m}}
\newcommand{\fP}{\mathfrak{P}}
\newcommand{\fl}{\mathfrak{l}}
\newcommand{\fL}{\mathfrak{L}}
\newcommand{\fq}{\mathfrak{q}}
\newcommand{\fQ}{\mathfrak{Q}}
\newcommand{\lifaf}{\Longleftrightarrow}
\newcommand{\st}{\text{ such that }}
\newcommand{\nl}{\newline}
\newcommand{\rra}{\rightarrow}
\newcommand{\aad}{\text{ and }}
\newcommand{\ZZ}[1]{\mathbf{Z} / #1 \mathbf{Z}}
\newcommand{\brk}[1]{ \left\lbrace #1 \right\rbrace }
\newcommand{\brc}[1]{ \left[ #1 \right] }
\newcommand{\inv}{^{-1}}
\newcommand{\mof}{\text{ of }}
\newcommand{\mfor}{\text{ for }}
\newcommand{\mif}{\text{ if }}
\newcommand{\tth}{^{\text{th}}}
\newcommand{\gap}{\hspace{.5em}}
\newcommand{\iso}{\cong}
\newcommand{\pwr}[1]{\left( #1 \right)}
\newcommand{\lrra}{\longrightarrow}
\newcommand{\unit}{^{\times}}
\newcommand{\frob}{\text{Frob}}
\newcommand{\uZZ}[1]{\left( \mathbf{Z} / #1 \mathbf{Z} \right)^{\times}}
\def\quotient#1#2{\raise1ex\hbox{$#1$}{\Large/} \lower1ex\hbox{$#2$}}
\newcommand{\Kcomment}[1]{}
\newcommand{\cdef}[1]{{\color{black}\textbf{\textsf{#1}}}}
\DeclareMathOperator{\mwhere}{where}
\DeclareMathOperator{\gal}{Gal}
\DeclareMathOperator{\id}{id}
\DeclareMathOperator{\Frob}{Frob}
\DeclareMathOperator{\GL}{GL}
\DeclareMathOperator{\Hom}{Hom}
\DeclareMathOperator{\Gal}{Gal}
\DeclareMathOperator{\Primes}{\textsf{Primes}}
\DeclareMathOperator{\Sel}{Sel}
\DeclareMathOperator{\ord}{ord}
\DeclareMathOperator{\cl}{cl}
\DeclareMathOperator{\cH}{H}
\DeclareMathOperator{\res}{res}
\begin{document}

\title{Selmer groups of twists of elliptic curves over $K$ with $K$-rational torsion points}
\author{Jackson S. Morrow}
\address{Department of Mathematics and Computer Science, Emory University,
Atlanta, GA 30322}
\email{\href{mailto:jmorrow4692@gmail.com}{\texttt{jmorrow4692@gmail.com}}}
\subjclass{Primary 11G05}
\keywords{Selmer groups, quadratic twists of elliptic curves}
\maketitle
\vspace*{-2em}
\begin{abstract}
We generalize a result of Frey \cite[Theorem]{frey1987selmer} on Selmer groups of twists of elliptic curves over $\mathbf{Q}$ with $\mathbf{Q}$-rational torsion points to elliptic curves defined over number fields of small degree $K$ with a $K$-rational point. We also provide examples of elliptic curves coming from \cite{zywinapossible} that satisfy the conditions of our Corollary \ref{D}.
\end{abstract}
\vspace*{-1em}
\doublespacing
\tableofcontents
\singlespacing
\vspace*{-2em}
\section{Introduction}
Let $\ell $ be an odd, rational prime and let $E/K$ be an elliptic curve defined over a number field $K$. The $K$-rational points $E(K)$ form a finitely generated group by the Mordell\--Weil theorem. Recall from \cite[Section~X.4]{silvermanAEC} that we have the following exact sequence
$$0 \rra E(K)/\ell E(K) \rra \Sel_{\ell}(E,K) \rra \Sha(E,K)[\ell] \rra 0,$$
where $\Sel_{\ell}(E,K)$ denotes the \cdef{$\ell$-Selmer group} and $\Sha(E,K)[\ell]$ is \cdef{$\ell$-Shafarevich\--Tate group}. If $K = \mQ$, then Frey \cite{frey1987selmer} provides explicit examples of quadratic twist of elliptic curves over $\mQ$ with $\mQ$-rational points of odd, prime order $\ell$ whose $\ell$-Selmer groups are non-trivial; a theorem of Mazur \cite{mazur1977rational} implies that $\ell \in \brk{3,5,7}$.

\begin{theorem}[\cite{frey1987selmer}]\label{1.1}
Suppose that $E/\mQ$ is an elliptic curve with a $\mQ$-rational torsion point $P$ of odd prime order $\ell$, and suppose that $P$ is not contained in the kernel of reduction modulo $\ell$; in particular, this means that $E$ is not supersingular modulo $\ell$ if $\ord_{\ell}(j_E) \geq 0$. Let $\widetilde{S}_E$ be the subset of odd primes dividing the conductor $N(E)$ of $E$ defined by
\begin{align*}
\widetilde{S}_E &:= \brk{p | N(E) :  p \equiv -1 \pmod \ell ,\, \ell \nmid \ord_p(\Delta_E)}, 
\end{align*}
where $j_E$ is the $j$-invariant of $E$ and $\Delta_E$ is the discriminant of $E$. Suppose that $\widetilde{S}_E = \emptyset$. Suppose that $d\equiv 3 \pmod 4$ is a negative, square-free integer coprime to $\ell N(E)$ satisfying:
\begin{enumerate}
\item if $\ord_{\ell}(j_E) < 0$, then $\pwr{\frac{d}{\ell}} = -1$;
\item if $p|N(E)$ is an odd prime, then 
$$\pwr{\frac{d}{p}} = \left\lbrace \begin{array}{lclc}
-1 & &\mif \ord_p(j_E) \geq 0; \\
-1 & & \mif\ord_p(j_E) < 0 \aad E/\mQ_p \text{ is a Tate curve}; \\
1 & & \text{ otherwise}.
\end{array}\right.$$
\end{enumerate}
Then we have that $\Sel_{\ell}(E^d,\mQ)$ is non-trivial if and only if $\ell$-torsion of the class group of $\mQ(\sqrt{d})$ is non-trivial.
\end{theorem}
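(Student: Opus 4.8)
The plan is to run a descent via the $\ell$-isogeny determined by the torsion point $P$. Write $\phi\colon E\to E':=E/\langle P\rangle$ for the quotient isogeny, with dual $\widehat\phi\colon E'\to E$; since $P$ is $\mQ$-rational, $E[\phi]\cong\mZ/\ell\mZ$ carries the trivial Galois action, and the Weil pairing then forces $E'[\widehat\phi]\cong\mu_\ell$. Twisting by the quadratic character $\chi_d$ cutting out $F:=\mQ(\sqrt d)$ gives an isogeny $\phi^d\colon E^d\to (E')^d$ with $E^d[\phi^d]\cong(\mZ/\ell\mZ)(\chi_d)$ and $(E')^d[\widehat\phi^d]\cong\mu_\ell(\chi_d)$. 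From the exact sequence $0\to E^d[\phi^d]\to E^d[\ell]\xrightarrow{\phi^d}(E')^d[\widehat\phi^d]\to 0$ one obtains the standard four-term exact sequence of Selmer groups
$$0\to \frac{(E')^d(\mQ)[\widehat\phi^d]}{\phi^d\bigl(E^d(\mQ)[\ell]\bigr)}\to \Sel_{\phi^d}(E^d,\mQ)\to \Sel_\ell(E^d,\mQ)\to \Sel_{\widehat\phi^d}((E')^d,\mQ).$$
Because $d$ is coprime to $\ell$, the module $\mu_\ell(\chi_d)$ has no nonzero $\mQ$-rational points, so the leftmost term vanishes. Thus the theorem reduces to two computations: (i) $\Sel_{\widehat\phi^d}((E')^d,\mQ)=0$, whence $\Sel_\ell(E^d,\mQ)\cong\Sel_{\phi^d}(E^d,\mQ)$; and (ii) $\Sel_{\phi^d}(E^d,\mQ)\cong\Hom(\CL(F),\mZ/\ell\mZ)$, which is nonzero precisely when $\CL(F)[\ell]\neq 0$.

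For (i), a class in $\Sel_{\widehat\phi^d}((E')^d,\mQ)\subseteq H^1(\mQ,\mu_\ell(\chi_d))$ is, after restriction to $F$ and passage to the $\chi_d$-eigenspace (legitimate since $\ell$ is odd), represented by an element of $\bigl(F^\times/(F^\times)^\ell\bigr)^-$, and it must satisfy the local Selmer condition at every place. The hypotheses are exactly what is needed to force these local conditions to be trivial: at a prime $p\mid N(E)$ one splits into the cases of split multiplicative (Tate), nonsplit multiplicative, and additive potentially multiplicative reduction of $E$, and the prescribed value of $\bigl(\tfrac{d}{p}\bigr)$ — which dictates whether $p$ is inert or split in $F$, and in the potentially multiplicative non-Tate case makes $E^d$ itself a Tate curve over $\mQ_p$ — kills the local image, while the hypothesis $\widetilde S_E=\emptyset$ rules out the one remaining obstruction coming from primes with $p\equiv -1\pmod\ell$ and $\ell\mid\ord_p(\Delta_E)$; at $\ell$ the condition ($\bigl(\tfrac{d}{\ell}\bigr)=-1$ when $\ord_\ell(j_E)<0$, and the good–ordinary behavior guaranteed by $P\notin\ker(\red_\ell)$ otherwise) handles the $\ell$-adic place; and $d<0$ with $d\equiv 3\pmod 4$ handles the place $\infty$ and the $2$-adic ramification. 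Hence the class is everywhere locally trivial, and therefore globally trivial.

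For (ii), restriction to $F$ followed by the $\chi_d$-eigenspace projection identifies $H^1(\mQ,(\mZ/\ell\mZ)(\chi_d))$ with $\Hom(G_F,\mZ/\ell\mZ)$; the eigenspace condition is harmless because complex conjugation acts by $-1$ on the class group of the imaginary quadratic field $F$, hence on all of its $\ell$-part. I would then check place by place that, under the congruence conditions on $d$, the local conditions defining $\Sel_{\phi^d}(E^d,\mQ)$ are exactly "unramified at every place of $F$": at $p\mid d$ the twist $E^d$ acquires good reduction over the ramified quadratic extension $F_{\fp}/\mQ_p$ (as $d$ is a square there), and the usual good-reduction, $p\neq\ell$ argument pins the condition down to the unramified classes; at $p\mid N(E)$ the same case analysis as in (i) identifies the local condition; at $\ell$ the good-ordinary/inert-Tate condition does likewise; and $\infty$ is controlled by $F$ being imaginary. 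By class field theory the everywhere-unramified homomorphisms $G_F\to\mZ/\ell\mZ$ are precisely $\Hom(\CL(F),\mZ/\ell\mZ)\cong\CL(F)/\ell\,\CL(F)$, which is nonzero iff $\CL(F)[\ell]\neq 0$. Combining with (i) yields that $\Sel_\ell(E^d,\mQ)$ is nontrivial if and only if $\CL(F)[\ell]$ is nontrivial.

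The main obstacle I anticipate is the local analysis at the primes $p\mid N(E)$ in step (i): correctly matching the reduction type of $E$ (and of $(E')^d$) at $p$, the effect of the quadratic twist by $d$, and the assigned Legendre symbol so as to see that the local image in $H^1(\mQ_p,\mu_\ell(\chi_d))$ vanishes — with the additive potentially-multiplicative ("otherwise") branch and the bookkeeping around $\ord_p(\Delta_E)$ versus the condition $\widetilde S_E=\emptyset$ being the delicate points. The archimedean and $\ell$-adic places, and the class-field-theoretic identification in step (ii), should be comparatively routine.
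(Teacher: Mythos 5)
Your isogeny-descent route, via $\phi\colon E\to E'=E/\langle P\rangle$ and its quadratic twist, is a genuinely different approach from the paper's (and from Frey's original): the paper works directly with the rank-two Galois module $E^d[\ell]$, constructs Selmer elements from dihedral extensions of degree $2\ell$ containing $K(\sqrt{d})$, and bounds the Selmer group by a prime-by-prime ramification analysis of the splitting fields $\overline{M}_1(\phi),\overline{M}_2(\phi)$ of Selmer cocycles (Sections~\ref{S3}--\ref{S4}). Decomposing the problem through the rank-one modules $(\mZ/\ell\mZ)(\chi_d)$ and $\mu_\ell(\chi_d)$ is a sensible simplification, and your step~(ii), identifying $\Sel_{\phi^d}(E^d,\mQ)$ with unramified homomorphisms $G_F\to\mZ/\ell\mZ$ and hence with $\Hom(\CL(F),\mZ/\ell\mZ)$, is essentially the paper's lower-bound construction (Lemma~\ref{4.3}) recast in isogeny language.

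The gap is in step~(i). The claim $\Sel_{\widehat\phi^d}\bigl((E')^d,\mQ\bigr)=0$ is too strong, and the argument for it is not sound: you pass from ``the local Selmer conditions are trivial'' to ``the class is everywhere locally trivial, and therefore globally trivial,'' but zero local conditions are only forced at primes dividing $N(E)$, at $\ell$, at $2$, at $\infty$, and at primes dividing $d$. At a good prime $p\nmid \ell d N(E)$ the $\widehat\phi^d$-Selmer condition is the \emph{unramified} subgroup of $H^1(\mQ_p,\mu_\ell(\chi_d))$, which is nontrivial whenever $p$ is inert in $F=\mQ(\sqrt{d})$ and $p\equiv-1\pmod{\ell}$ (Frobenius then acts on $\mu_\ell(\chi_d)$ by $-p\equiv 1\pmod{\ell}$); the hypothesis $\widetilde{S}_E=\emptyset$ constrains only divisors of $N(E)$, so such primes are not excluded. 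Thus a Selmer class need not be locally trivial away from the bad set, and $\Sel_{\widehat\phi^d}$ does not vanish in general; indeed (i)+(ii) would force the exact count $\#\Sel_\ell(E^d,\mQ)=\#\CL(F)[\ell]$, strictly stronger than the double divisibility the paper obtains. What you actually need is weaker and true: restricting to $F$ and using that $\cO_F^\times\otimes\mZ/\ell\mZ=0$ for $F$ imaginary quadratic and $\ell$ odd, every class in $\Sel_{\widehat\phi^d}$ is represented by $a\in F^\times$ with $(a)=I^\ell$, and $a\mapsto[I]$ gives an injection $\Sel_{\widehat\phi^d}\hookrightarrow\CL(F)[\ell]$. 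Combined with (ii) and the vanishing of the first term of the four-term exact sequence, this yields $\cl(F)[\ell]\mid\#\Sel_\ell(E^d,\mQ)\mid(\cl(F)[\ell])^2$ --- exactly Corollary~\ref{E} specialized to $K=\mQ$ --- and in particular the asserted ``if and only if.''
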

\begin{remark}
Frey actually proved a more explicit double divisibility statement \cite[Theorem]{frey1987selmer} concerning the $\ell$-Selmer group of $E^d$ and $\ell$-torsion of ray class groups, when $\widetilde{S}_E \neq \emptyset$; we completely generalize his double divisibility in Theorem \ref{C}. 
\end{remark}
\begin{comment}
\begin{coron}[Corollary~\cite{frey1987selmer}]
If $\widetilde{S}_E = \emptyset$, then $\Sel_{\ell}(E^d,\mQ)$ is non-trivial if and only if the $\ell$-part of the class group of $\mQ(\sqrt{d})$ is non-trivial.
\end{coron}
\end{comment}

Frey's idea was to obtain information about $\Sel_{\ell}(E^d,\mQ)$ when $E(\mQ)$ contains an element of order $\ell$. In particular, he studied the behavior of $E$ over local fields $\mQ_{\ell}$ and their algebraic closures $\overline{\mQ}_{\ell}$. His work illustrated to a deep relationship between $\ell$-ranks of Selmer groups and class groups of finite Galois extensions of exponent $\ell$. In this paper, we investigate the $\ell$-Selmer rank in families of quadratic twist of elliptic curves $E/K$ with $K$-rational points of odd prime order $\ell$. We use Frey's proof as a blueprint for our own, but the techniques we utilize come from class field theory. That being said, many of his arguments go through undisturbed.

In order to state our results, we first need to recall some facts concerning prime torsion of elliptic curves defined over number fields of small degree. We give a succinct summary of these results and refer the reader to \cite{sutherland2012torsion} for a more detailed synopsis. Let $S(n)$ denote the set of primes that can arise as the order of a rational point on an elliptic curve defined over a number field of degree $n$ and let \textsf{Primes}$(n)$ denote the set of primes bounded by $n$. By Merel\--Oesterl\'e's bound, we know that 
$$S(n) \subseteq \Primes((3^{n/2} + 1)^2).$$
The exact value of the set $S(n)$ is currently known for $n\leq 5$, but reasonable good bounds on $S(6)$ and $S(7)$ are given in \cite{derickx2012torsion}.
\begin{center}
\begin{tabular}{c|c|c}
$n$ & $S(n)$ & Reference \\
\hline
1 & $\Primes(7)$ & \cite{mazur1977rational} \\
2 & $\Primes(13)$ & \cite{kamienny1992torsion} \\
3 & $\Primes(13)$ &\cite{parent2003no} \\
4 & $\Primes(17)$ & \cite{kamienny2011torsion} \\
5 & $\Primes(19)$ & \cite{derickxtorsion}  \\
6 & $\subseteq \Primes(19) \cup \brk{37,73}$ & \cite{derickx2012torsion} 
%7 & $\subseteq \Primes(43) \cup \brk{59,61,67,73,113,127}$ & \cite{derickx2012torsion} \\
\end{tabular}
\end{center}
One can also consider the subset $S_{\mQ}(n)\subseteq S(n)$ corresponding to primes that can arise as the order of a rational point on an elliptic curve $E_K = E \times_{\mQ} K$ where $E$ is defined over $\mQ$ and $K$ is a number field of degree $n$. From \cite{alvarotorsion2013}, it is known that 
$$S_{\mQ}(n) \subseteq \Primes(13) \cup \brk{37} \cup \Primes(2n+1),$$
and \cite[Corollary~1.1]{alvarotorsion2013} states that for $1\leq n \leq 20$, 
$$
S_{\mQ}(n) = \left\lbrace
\begin{array}{cc}
\Primes(7) &\mfor n= 1,2 \\
\brk{2,3,5,7,13}& \mfor n = 3,4 \\
\Primes(13) &  \mfor n = 5,6,7 \\
\Primes(17) & \mfor n = 8\\
\Primes(19) & \mfor n = 9,10,11 \\
\Primes(19) \cup {37} &\mfor 12\leq n \leq 20. \\
\end{array}
\right.
$$

In this paper, we generalize the full double divisibility statement of \cite[Theorem]{frey1987selmer} to elliptic curves defined over small degree number fields $K$. We state explicit versions of our results, Theorems \ref{1.3},\ref{C} and Corollary \ref{E}, in Section \ref{S32} once we have established some notation.

\subsection*{Some remarks about the proofs}
The problem of constructing elements in the Selmer group is a classical question with many avenues of approach. Frey's condition that the elliptic curve $E/K$ have a $K$-rational point of odd prime power order $\ell>3$ has two immediate consequences. First, the image of Galois under the mod $\ell$ representation is conjugate to 
$$
\begin{psmallmatrix} 1 & * \\ 0 & *\end{psmallmatrix} \subset \GL_2(\mF_{\ell}),
$$
which will assist in our explicitly description the Galois structure of splitting fields of $\ell$-covers of $E/K$ and the splitting fields of elements in $\Sel_{\ell}(E^d,K)$. The second is that we can immediately identify a quotient of $\cH^1(\Gal(\overline{K}/K),E(\overline{K})[\ell])$, namely $\cH^1(\Gal(\overline{K}/K),\mu_{\ell})$. Frey's (and our) proof relies on an analysis of cocyles in $\cH^1(\Gal(\overline{K}/K),E(\overline{K})[\ell])$ and this fact will allow us to deduce local triviality in certain cases using Hilbert's Theorem 90. A laborious aspect of our proofs is the case by case analysis of how primes $\fp $ dividing $N(E)$ behave in the field $K(\sqrt{d})\cdot K(E[\ell])$ where $d \in \cO_K\unit/(\cO_K\unit)^2$ yields the quadratic twist $E^d$ of $E$ and $K(E[\ell])$ is the $\ell$-division field of $E/K$. 

\subsection*{Organization of paper} In Section \ref{S2}, we recall some classical facts from class field theory and algebraic number theory. In Section \ref{S32}, we state our main results, Theorems \ref{1.3},\ref{C} and Corollary \ref{E}. In Section \ref{S3}, we prove Theorem \ref{1.3}, which yields a single divisibility statement. In Section \ref{S4}, we prove the double divisibility statement of Theorem \ref{C} by investigating the Galois structure of splitting fields of $\ell$-covers of $E/K$ and the splitting fields of elements $\Sel_{\ell}(E^d,K)$. Finally in Section \ref{S6}, we provide explicit examples of elliptic curves over $\mQ$ coming from \cite{zywinapossible} that satisfy the Corollary \ref{D}. 

\subsection*{Acknowledgments}The author wishes to thank Ken Ono for initially suggesting this project, David Zureick-Brown for his guidance and patience in explaining the finer details and for his help generalizing the conditions of \cite{frey1987selmer} in a series of conversations, \cite{stackexchange} for help in Lemma \ref{3.1}, and \cite{mathoverflow} for help in Remark \ref{2.4}. The computations in this paper were performed using the \textsc{Magma} computer algebra system \cite{MR1484478}. For \textsc{Magma} code verifying the claims in Section \ref{S6}, we refer the reader to \cite{morrowSelmer2015}.

%\subsection*{Competing interests}
%The author declares that he has no competing interests.

\section{Background \& Notation}\label{S2}
Let $L/K$ be a Galois extension of $K$, with ring of integers $\cO_L \aad \cO_K.$ For any finite prime $\fP \in \cO_L$ lying over a prime $\fp \in \cO_K,$ let $D(\fP)$ denote the \cdef{decomposition group of $\fP ,$} let $I(\fP)$ denote \cdef{the inertia group of $\fP$} and let $\kappa' := \cO_L / \fP \aad \kappa= \cO_K / \fp$ be the residue fields of characteristic $q = p^n$. The Galois theory of the extension encodes the splitting and ramification of $\fP$ over $\fp$, in particular, we have the below correspondence
\begin{figure*}[h!]
$$
\xymatrix{
L   & {1} \ar@{-}[d]^{e=|I(\fP)|} \\
K^{I(\fP)}\ar@{-}[u]^{e=|I(\fP)|} & I(\fP) \ar@{-}[d]^{f=|D(\fP)|/e} \\
K^{D(\fP)} \ar@{-}[u]^{f=|D(\fP)|/e} & D(\fP) \ar@{-}[d]^{g = n/ef}\\
K \ar@{-}[u]^{g = n/ef} & \Gal (L/K) 
}
$$
\end{figure*}

The exact sequence
\begin{equation*}
1 \lrra I(\fP) \lrra D(\fP) \lrra \Gal (\kappa'/\kappa) \lrra 1
\end{equation*}
induces an isomorphism $D(\fP)/ I(\fP) \iso \Gal(\kappa'/\kappa).$ In particular, there is a unique element in $D(\fP) / I(\fP),$ denote by $\brc{\frac{L/K}{\fP}},$ which maps to the $q\tth$ power Frobenius map $\Frob_q \in \Gal (\kappa'/\kappa)$ under the isomorphism, where $q$ is the number of elements in $\kappa.$ The notation $\brc{\frac{L/K}{\fP}}$ is referred to as the \cdef{Artin symbol} of the extension $L/K$ at $\fP$. If $L/K$ is an abelian extension, then the Frobenius automorphism $\brc{\frac{L/K}{\fP}}$ is denoted $\pwr{\frac{L/K}{\fp}}$; this change in notation reflects the fact that the automorphism is determined by $\fp \in \cO_K$ independent of the primes $\fP \mof \cO_L$ above it.

\begin{defn}\label{2.1}
Now let $\fm$ be a modulus divisible by all (finite or infinite) ramified primes of an abelian extension  $L/K.$ There is therefore a canonically defined Frobenius element in $\gal (L/K)$ denoted $\frob_{\fp}$. The \cdef{Artin symbol} of $L/K$ is defined on the group of prime-to-$\fm$ fractional ideals, $I_{K}(\fm)$, by linearity:
\begin{eqnarray*}
\pwr{\frac{L/K}{\bullet}}\colon I_{K}(\fm) &\lrra &  \gal (L/K) \\
 \prod_{i=1}^m \fp_i^{n_i} &\longmapsto &\prod_{i=1}^m\frob_{\fp_i}^{n_i}.
\end{eqnarray*}
Therefore, we can extend the Artin symbol to give us a group homomorphism
\begin{equation*}
\Phi_{\fm}\colon I_K(\fm) \lrra  \gal (L/K)
\end{equation*}
called the \cdef{global Artin map}.
\end{defn}
In this note, we need a specific result concerning the Artin symbol and ramification theory for quadratic extensions $L/K$. 
\begin{lemma}\label{2.15}
Let $L/K$ be a quadratic extension, let $\fp$ be a prime ideal of $\cO_K$, let $\fm = \Delta_{L/K}$ in the definition of the global Artin map, and let $\fP$ denote some prime of $\cO_L$ lying above $\fp$, and let $\langle \delta \rangle = \Gal(L/K).$ Then:
\begin{enumerate}
\item $\fp$ is unramified and splits completely in $L$ $\lifaf$ $\pwr{\frac{L/K}{\fp}} = \id$,
\item $\fp$ is unramified and non-split in $L$ $\lifaf$ $\pwr{\frac{L/K}{\fp}} = \delta$,
\item $\fp$ is ramified in $L$ $\lifaf$ $\fp | \Delta_{L/K}$ where $\Delta_{L/K}$ denotes the relative discriminant of $L/K$.
\end{enumerate}
\end{lemma}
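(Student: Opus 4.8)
The plan is to unwind the decomposition/inertia picture from the preceding diagram in the special case $[L:K]=2$, where $\Gal(L/K)=\langle\delta\rangle\cong\mathbf{Z}/2\mathbf{Z}$, and match each of the three possible ramification behaviors of $\fp$ against the three possible values — or rather two, since $e,f,g$ are multiplicatively constrained by $efg=2$ — of the Artin symbol. Concretely, for a prime $\fP$ of $\cO_L$ over $\fp$ we have $e=|I(\fP)|$, $f=|D(\fP)/I(\fP)|$, and $g$ the number of primes above $\fp$, with $efg=2$. The Artin symbol is well-defined here precisely because $L/K$ is abelian and $\fm=\Delta_{L/K}$ is divisible by all ramified primes, so $\pwr{\frac{L/K}{\fp}}$ makes sense for every $\fp\nmid\Delta_{L/K}$, and for $\fp\mid\Delta_{L/K}$ statement (3) is simply the definition of the relative discriminant (a prime ramifies in $L/K$ iff it divides $\Delta_{L/K}$), which I would cite from standard algebraic number theory (e.g. Neukirch).

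For (1): if $\fp$ is unramified and splits completely then $e=1$, $g=2$, hence $f=1$, so $D(\fP)/I(\fP)=D(\fP)$ is trivial, and under the canonical isomorphism $D(\fP)/I(\fP)\cong\Gal(\kappa'/\kappa)$ the Frobenius — i.e. the Artin symbol $\pwr{\frac{L/K}{\fp}}$ — is the identity; conversely, if $\pwr{\frac{L/K}{\fp}}=\id$ then (since $\fp$ is unramified, so $e=1$) the element $\frob_\fp$ generating $D(\fP)/I(\fP)=D(\fP)$ is trivial, forcing $f=1$ and therefore $g=2$, which is exactly complete splitting. For (2): the only remaining unramified case is $e=1$, $f=2$, $g=1$, in which $D(\fP)=\Gal(L/K)=\langle\delta\rangle$ and the Frobenius generates it, so $\pwr{\frac{L/K}{\fp}}=\delta$; conversely $\pwr{\frac{L/K}{\fp}}=\delta\neq\id$ rules out the split case by (1), and rules out ramification because if $\fp$ were ramified it would divide $\fm$ and the symbol would be undefined (or, taking the cleaner bookkeeping, $\fp$ unramified is part of the hypothesis of the forward direction and in the quadratic case the three cases (1)–(3) are mutually exclusive and exhaustive, so $\pwr{\frac{L/K}{\fp}}=\delta$ together with unramifiedness leaves only the non-split case).

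I do not anticipate a genuine obstacle here: this is a direct specialization of the general decomposition-group formalism already set up, and the only external input is the characterization of ramified primes via the relative discriminant. The one point requiring mild care is the logical structure of the "if and only if" statements — one should first dispose of (3) (ramification $\iff$ $\fp\mid\Delta_{L/K}$), observe that among unramified primes "split completely" and "inert/non-split" are complementary, and then (1) and (2) follow in tandem from $efg=2$ and the identification of $\frob_\fp$ with the generator of $D(\fP)/I(\fP)$. I would present it as: prove (3) by definition of $\Delta_{L/K}$, then for $\fp$ unramified note $D(\fP)\cong\mathbf{Z}/f\mathbf{Z}$ with $f\in\{1,2\}$ and $\frob_\fp$ a generator, and read off (1) and (2) according to whether $f=1$ or $f=2$.
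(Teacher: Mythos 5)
Your proposal is correct and follows essentially the same route as the paper: dispose of (3) via the definition of the relative discriminant, then for unramified $\fp$ use the fundamental identity $efg=2$ to identify the two remaining cases with $f=1$ and $f=2$, reading off the Artin symbol as the generator of $D(\fP)/I(\fP)\cong\Gal(\kappa'/\kappa)$. The paper's proof is slightly more terse but proceeds by exactly this bookkeeping.
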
  
\begin{proof}
Part (3) follows from Definition \ref{2.1}. Since $\fp$ is unramified, we know that $|D(\fP)| = f$ where $f$ is the inertia degree of $\fP$ over $\fp$.  A prime $\fp$ splits completely in $L$ if and only if the ramification index $e$ of $\fP$ above $\fp$ and the inertia degree $f$ of $\fP$ above $\fp$ are equal to $1$. Hence, 
$$|D(\fP)| = [\kappa' : \kappa] = 1 \lifaf \ord \pwr{\frac{L/K}{\fp}} = 1 \lifaf \pwr{\frac{L/K}{\fp}}  = \id ,$$
which proves (1). For (2), our assumptions and the fundamental identity tell us that $e = 1$ and $g =1$ if and only if $f = 2$. Thus, 
$$|D(\fP)| = [\kappa' : \kappa] = 2 \lifaf \ord \pwr{\frac{L/K}{\fp}} = 2 \lifaf \pwr{\frac{L/K}{\fp}}  = \delta .$$
\end{proof}

In Theorem \ref{1.3}, we use Hecke characters to describe a subset of primes $\fp | N(E)$. We recall the definition of these characters and discuss how their values can encode information about ramification.
\begin{defn}
Let $\ff$ be a non-zero ideal of $\cO_K$, and let 
$$\chi_{\infty}\colon (\mR\unit)^{r_1} \times (\mC\unit)^{r_2} \lrra \mC\unit$$
be a continuous character where $[K:\mQ] = r_1 + 2r_2$. Then the character 
$$\chi_{\cH}\colon I(\ff) \lrra \mC\unit$$
is a \cdef{Hecke character} with \cdef{conductor $\ff$} and \cdef{infinity-type $\chi_{\infty}$} if the following diagram commutes:
$$
\xymatrixrowsep{.2in}
\xymatrixcolsep{.2in}
\xymatrix{
{} & P(\ff) \ar@{->}[rd]^{\chi_{\cH}}& {} \\
K_{\ff} \ar@{->}[ru]^{\alpha \mapsto (\alpha)} \ar@{->}[rd]^{\alpha \mapsto 1\otimes \alpha} & {} & \mC\unit \\
{} & (\mR\unit)^{r_1} \times (\mC\unit)^{r_2} \ar@{->}[ru]^{\chi_{\infty}} & {}
}
$$
where $I(\ff)$ is the group of fractional ideals corprime to $\ff$ and $P(\ff)$ is the group of principal ideals of $\cO_K$ relatively prime to $\ff$. A Hecke character is \cdef{primitive} if it is not induced from another classical Hecke character with conductor $\ff'|\ff$. 
\end{defn}
\begin{remark}\label{2.4}
Recall that there is a conductor-preserving correspondence between primitive Dirichlet characters of order $\ell$ and cyclic, degree $\ell$ number fields $k/\mQ$. From \cite[Theorem~3.7]{washington2012introduction},  the Dirichlet character $\chi$ corresponds to the fixed field $k$ of $\ker \chi \subseteq \uZZ{f_{\chi}} = \Gal(\mQ(\zeta_{f_{\chi}})/\mQ)$. For any prime $q$, 
$$\chi(q) = 0 \lifaf q \text{ ramifies in }k, \quad \aad \quad \chi(q) = 1 \lifaf q \text{ splits in }k.$$

By class field theory, any Hecke character $\chi_{\cH}$ of $K$ of order $\ell$ determines a cyclic extension $N/K$ of degree $\ell$. Moreover, the set of Hecke characters determining this cyclic extension equals $\{\chi_{\cH},\chi_{\cH}^2,\dots , \chi_{\cH}^{\ell - 1}\}$. These $\ell - 1$ Hecke characters have the same conductor $\ff$, and the determinant of $L/K$ equals their product $\ff^{\ell - 1}$ by the Hasse conductor-discriminant theorem. Thus for any prime ideal $\fq$ of $\cO_K$, we have that
$$\chi_{\cH}(\fq) = 0 \lifaf \fq \text{ ramifies in }N, \quad \aad \quad \chi_{\cH}(\fq) = 1 \lifaf \fq \text{ splits in }N.$$
\end{remark}

\subsubsection*{Notation} We set the following notation.
\begin{align*}
K &:=\text{ Galois number field,}\\
\ell &:= \text{ odd, rational prime in } S(n)\setminus\brk{2,3} \st \ell \nmid \cl(K) \aad \zeta_{\ell} \notin K, \\
L/K &:= \text{ algebraic extension of }K, \\
\fp &:= \text{ prime divisor of the rational prime }p \text{ in } \cO_K,\\
\fP &:= \text{ prime divisor of }\fp \text{ in } \cO_L,\\
K_{\fp} &:= \text{ completion of }K \text{ with respect to }\fp ,\\
L_{\fP} &:= \text{ completion of }L \text{ with respect to }\fP ,\\
S &:= \text{ finite set of primes of }\cO_K ,\\
M/L &:= \text{ Galois extension with abelian Galois group of exponent }\ell.
\end{align*}
More generally, lower case gothic font will denote a divisor of a rational prime of $\mQ$, and similarly, upper case gothic font will denote a divisor of a prime of $K$.

\begin{defn}
$M/L$ is said to be \cdef{little ramified outside $S$} if for primes $\fp\notin S$ and all $\fP_L|\fp$ one has
$$M\cdot L_{\fP}(\zeta_{\ell}) = L_{\fP}(\zeta_{\ell})(\sqrt[{\ell}]{u_1},\dots ,\sqrt[{\ell}]{u_k})$$
with $k\in \mN$ and $\ord_{\fP_L}(u_i) = 0$. Here $\zeta_{\ell}$ is a ${\ell}\tth$ root of unity, $u_1,\dots , u_k$ are elements in $L_{\fP}(\zeta_{\ell})$, and $\ord_{\fP_L}$ is the normed valuation belonging to $\fP_L$.
\end{defn}
If $M/L$ little ramified outside $S$, then $M/L$ is unramified at all divisors of primes $\fp\notin S \cup \brk{\fl}$.
\vspace*{-.2em}
\subsubsection*{Notation}We set the following notation, which comes directly from \cite{frey1987selmer}:
\begin{align*}
L_S &:= \text{ maximal abelian extension of exponent $\ell$ of $L$ which is}  \\ 
& \hspace*{1.7em}\text{ little ramified outside $S$}, \\
L_{S,u} &:= \text{ maximal subfield of $L_{S}$ which is unramified outside of $S$}, \\ 
H_S(L) &:= \text{  Galois group of $L_{S}/L$}, \\ 
H_{S,u}(L) &:= \text{ Galois group of $L_{S,u}/L$} ,\\
\cl_{S}(L)\brc{\ell} &:= \text{ order of $H_{S}(L)$}, \\
\cl_{S,u}(L)\brc{\ell} &:= \text{ order of $H_{S,u}(L)$}.
\end{align*}

\begin{remark}\label{2.6}
If $S = \emptyset$, we see that $\cl_{\emptyset,u}(L)$ is equal to the order of the subgroup of the divisor class group of $L$ consisting of elements of order $\ell$ which we denote by $\cl(L)[\ell]$. 
\end{remark}

Now assume that $L/K$ is normal with cyclic Galois group generated by an element $\gamma$ of order $\ell-1$. Take an extension $\widetilde{\gamma}$ to $L(\zeta_{\ell})$. Let $\chi_{\ell}$ be the cyclotomic character induced by the action of $\Gal(L(\zeta_{\ell})/K)$ on $\langle \zeta_{\ell} \rangle$. Then $\chi_{\ell}(\widetilde{\gamma})$ is determined by 
$$\widetilde{\gamma}(\zeta_{\ell}) = \zeta_{\ell}^{\chi_{\ell}(\widetilde{\gamma})}.$$
Let $M$ be normal over $K$ containing $L$ such that $\Gal(M/L)$ is abelian of exponent $\ell$. Then $\widetilde{\gamma}$ operates by conjugation on $$\Gal(M(\zeta_{\ell})/L(\zeta_{\ell})) \iso \Gal(M/L), $$
and this operation does not depend on choice of $\widetilde{\gamma}$. Hence the subgroup 
$$H(\chi_{\ell}):= \brk{\alpha \in \Gal(M/L) : \widetilde{\gamma}\alpha \widetilde{\gamma}\inv = \alpha^{\chi_{\ell}(\widetilde{\gamma})}} \subseteq \Gal (M/L)$$
is well-defined. In the special case that $M = L_S$, we denote the order of $H_{S}(L)(\chi_{\ell})$ by $\cl_S(L)_{\ell}(\chi_{\ell})$.

Now we shall consider an elliptic curve $E/K$ given by a Weierstrass equation $F(x,y) = 0$ with coefficients in $\cO_K$ and minimal discriminant $\Delta_E$. For any extension $L/K$, we denote the $L$-rational points of $E$ (including $\infty$) by $E(L)$. Let $\chi_{\cH}$ be a primitive Hecke character of order $\ell$ and let 
\begin{align*}
\widetilde{S}_E &:= \brk{\fp | N(E) : \chi_{\cH}(\fp) \neq 0 ,\ord_{\fp}(\Delta_E)\not\equiv 0 \pmod \ell} \\
S_E &:= \{\fp \in \widetilde{S}_E : \ord_{\fp}(j_E) < 0 \}.
\end{align*}
Let $d \in \cO_K\unit/(\cO_K\unit)^2$ and denote the twist of $E/K$ by $E^d/K$. Via the general theory of twists \cite[Section~X.2]{silvermanAEC}, we know that $E^d$ is isomorphic to $E$ over $K(\sqrt{d})$ but not over $K$. Let $G_K := \Gal(\overline{K},K)$ denote the absolute Galois group. Let $\fW(E^d,K)[\ell]$ be the set of elements of order $\ell$ in the kernel of 
$$\rho\colon \cH^1(G_K,E^d(\overline{K})) \lrra \bigoplus_{\fp \text{ prime}} \cH^1(\Gal(\overline{K_{\fp}}/K_{\fp}),E^d(\overline{K_{\fp}})).$$
The group of elements of order $\ell$ in the Selmer group of $E^d$, denoted by $\Sel_{\ell}(E^d,K)$ is given as the pre-image of $\fW(E^d,K)[\ell]$ by the map 
$$\alpha \colon \cH^1(G_K,E^d(\overline{K})[\ell]) \lrra \cH^1(G_K,E^d(\overline{K})).$$
There are two main cases we need to consider:
\subsubsection*{Case 1:} Assume that $\ord_{\fp}(j_E) \geq 0$. Then there is a finite extension $N/K$ such that $E$ has good reduction modulo all $\fP_N | \fp$ i.e., we find an elliptic curve $\widetilde{E}$ such that $\tilde{E}$ modulo $\fP_N$ is an elliptic curve over the residue field of $\fP_N$. $\widetilde{E}(\overline{N_{\fP}})$ contains a subgroup $\widetilde{E}_-(N_{\fP})$ consisting of points $(\widetilde{x},\widetilde{y})$ with $\ord_{\fP_N}(\widetilde{x}) < 0$. $\widetilde{E}_-$ is the \cdef{kernel of reduction modulo $\fP_N$}, and $\ord_{\fP_N}(\widetilde{x}/\widetilde{y})$ is the \cdef{level} of $(\widetilde{x},\widetilde{y})$. For ease of notation, we say that a point $(x,y) \in E(\overline{N_{\fP}})$ is in the kernel of the reduction modulo $\fP_N$ if its image $(\widetilde{x},\widetilde{y}) \in \widetilde{E}_-(\overline{N_{\fP}})$.
\subsubsection*{Case 2:} Assume that $\ord_{\fp}(j_E) < 0$. Then after an extension $L/K_{\fp}$  of degree $\leq 2$, $E$ becomes a Tate curve (via a theorem of Tate \cite[Theorem~C.14.1]{silvermanAEC}); in particular, one has a Tate parametrization 
$$\tau \colon \overline{L}\unit/\langle q \rangle \lrra E(\overline{L})$$
where $q$ is the $\fp$-adic period of $E$. One also has that $$j_E = \frac{1}{q} + \sum_{i=0}^{\infty} a_iq^i \quad \text{ with }a_i \in \mZ$$
and the points of order $\ell$ in $E(\overline{L})$ are of the form $\tau(\zeta_{\ell}^{\alpha}(q^{\beta/\ell}))$ where $\alpha,\beta \in \brk{1,\dots , \ell-1}$. 
\begin{defn}
If $F/K$ is a number field and $\fP_F | \fp$ we say that a point $(x,y) \in E(F_{\fP})$ is in the \cdef{connected component of the unity modulo $\fP_F$} if it is of the form $\tau(u)$ with $u$ a $\fP_F$-adic unit, and $(x,y)$ is in the kernel of the reduction modulo $\fP_F$ if $u-1 \in \fP_F$. 
\end{defn}
\begin{remark}\label{2.19}
One should notice that if $E$ is not a Tate curve over $K_{\fp}$ but over an extension of degree $2$ of $K_{\fp}$, then for all points $P \in E(K_{\fp})$, $2P$ is in the connected component of unity modulo $\fp$.
\end{remark}

\section{Statement of Results}\label{S32}
As mentioned above, \cite[Theorem]{frey1987selmer} gives a double divsibility statement involving the $\ell$-torsion of the Selmer group. First, we generalize his single divisibility to elliptic curves $E/K$ defined over number fields $K$ of finite degree with $K$-rational points of odd, prime order $\ell$. Recall that $S(n)$ is the set of primes that can arise as the order of a rational point on an elliptic curve defined over a number field of degree $n$.
\begin{thmx}\label{1.3}
Let $K$ be a Galois number field and choose $\ell \in S(n)\setminus\brk{2,3} $ such that $\ell \nmid \cl(K)$ and $\zeta_{\ell} \notin K $.
% and the ramification index $e_{\fl}(K/\mQ)$ satisfies $1>e_{\fl}(K/\mQ)/(\ell-1) - 1$.
Let $E/K$ be an elliptic curve over $K$ with a $K$-rational point $P$ of order $\ell$; let $\chi_{\cH}$ denote a primitive Hecke character of $K $ with order $\ell$; let $\fq$ denote a prime of $\cO_K$ that lies above 2; and let $\fl$ denote a prime of $\cO_K$ that lies above $\ell$. Suppose that $P$ is not contained in the kernel of reduction modulo $\fl$; in particular, this means that $E$ is not supersingular modulo $\fl$ if $\ord_{\fl}(j_E) \geq 0$. Let $S_E$ be the set of primes
\begin{align*}
S_E &:= \brk{\fp | N(E) : \ord_{\fp}(\Delta_{E}) \not\equiv 0 \pmod \ell , \, \chi_{\cH}(\fp) \neq 0, \aad \ord_{\fp}(j_{E}) < 0}.
\end{align*}
Suppose that $d \in \cO_K\unit/(\cO_K^{\times})^2$ is negative\footnote{We say that $d \in \cO_K\unit/(\cO_K^{\times})^2$ is negative if the image of $d$ under each real embedding is negative}, coprime to $\fl \cdot  N(E)$, and satisfies the following divisibility and Artin symbol conditions where $\langle \delta \rangle = \Gal(K(\sqrt{d})/K)$:
\begin{enumerate}
\item if $\fq | N(E)$, then $\fq | \Delta_{K(\sqrt{d})/K}$;
\item if $\ord_{\fl}(j_{E}) < 0$, then $\pwr{\frac{K(\sqrt{d})/K}{\fl}} = \delta$;
\item if $\fp|N(E)$ is a prime of $K$ with $\fp\notin S_E$, then 
\begin{enumerate}
\item[$\bullet$] if $\ord_{\fp}(j_{E}) \geq 0$, then $\pwr{\frac{K(\sqrt{d})/K}{\fp}} = \delta$;
\item[$\bullet$] if $\ord_{\fp}(j_{E}) < 0 \aad E/K_{\fp} \text{ is a Tate curve},$ then $\pwr{\frac{K(\sqrt{d})/K}{\fp}} = \delta$;
\item[$\bullet$] otherwise, $\pwr{\frac{K(\sqrt{d})/K}{\fp}} = \id$.
\end{enumerate}
\end{enumerate}
Then we have that the order of the $\ell$-torsion of the $S_E$-ray class group of $K(\sqrt{d})$ divides the order of $\Sel_{\ell}(E^d,K)$. More precisely, the single divisibility statement holds:
\begin{equation}\label{eqn2}
\cl_{S_E,u}(K(\sqrt{d}))[\ell] \Big{|} \# \Sel_{\ell}(E^d,K).
\end{equation}
\end{thmx}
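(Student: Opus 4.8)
The plan is to follow Frey's blueprint essentially verbatim: build an explicit subgroup $\mathcal S$ of $\cH^1(G_K,E^d(\overline K)[\ell])$ of order $\cl_{S_E,u}(K(\sqrt d))[\ell]$ out of the class group of $M_0:=K(\sqrt d)$ and the torsion point $P$, show $\mathcal S$ injects into $\cH^1(G_K,E^d(\overline K)[\ell])$, verify place by place that its image lies in $\Sel_\ell(E^d,K)$, and conclude \eqref{eqn2}.

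\emph{The source of the classes.} Since $P\in E(K)$ has order $\ell$, the submodule $\langle P\rangle$ and the Weil pairing give an exact sequence of $G_K$-modules $0\to \mZ/\ell\mZ\to E(\overline K)[\ell]\to\mu_\ell\to 0$ — the quotient is $\mu_\ell$ because the determinant of the mod-$\ell$ representation is the cyclotomic character $\chi_\ell$. Writing $\varepsilon\colon G_K\to\{\pm1\}\subseteq(\mZ/\ell\mZ)\unit$ for the quadratic character cutting out $M_0/K$ (legitimate since $\ell$ is odd), the quadratic twist yields
\[ 0\longrightarrow (\mZ/\ell\mZ)(\varepsilon)\longrightarrow E^d(\overline K)[\ell]\longrightarrow\mu_\ell(\varepsilon)\longrightarrow 0. \]
As $[M_0:K]=2$ is prime to $\ell$, inflation--restriction collapses to $\cH^1(G_K,(\mZ/\ell\mZ)(\varepsilon))\iso\Hom(G_{M_0},\mZ/\ell\mZ)^{-}$, the group of $\phi$ with $\phi(\delta\sigma\delta\inv)=-\phi(\sigma)$, where $\langle\delta\rangle=\Gal(M_0/K)$ (lifted arbitrarily). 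Inside this I take the image of $\Hom(\Gal(L_{S_E,u}/M_0),\mZ/\ell\mZ)$ (notation of the excerpt, with $L=M_0$); class field theory identifies it with the $\delta$-anti-invariant part of $H_{S_E,u}(M_0)$. Since $\ell$ is odd while $M_0/K$ is only quadratic — so an exponent-$\ell$ extension unramified over $M_0$ away from $S_E$ is already unramified over $K$ away from $S_E$ — the hypothesis $\ell\nmid\cl(K)$ forces the complementary ($\delta$-invariant) part to be trivial, so that $\#\mathcal S=\cl_{S_E,u}(M_0)[\ell]$. Let $\beta\colon\mathcal S\to\cH^1(G_K,E^d(\overline K)[\ell])$ be induced by $(\mZ/\ell\mZ)(\varepsilon)\hookrightarrow E^d(\overline K)[\ell]$.

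\emph{Injectivity of $\beta$.} From the long exact sequence of the displayed triple, $\ker\beta=\mu_\ell(\varepsilon)^{G_K}$ (using $E^d(K)[\ell]=0$, which holds because the mod-$\ell$ representation of $E^d$ has no trivial subquotient), and $\mu_\ell(\varepsilon)^{G_K}\neq 0$ only if $\chi_\ell=\varepsilon$, i.e.\ $K(\zeta_\ell)=M_0$. But $M_0/K$ is unramified at $\fl$ (as $d$ is coprime to $\fl$ and $\fl\nmid 2$), whereas $K(\zeta_\ell)/K$ is ramified at $\fl$; hence $K(\zeta_\ell)\neq M_0$ and $\beta$ is injective.

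\emph{The Selmer conditions (the main work).} It remains to show $\beta(\mathcal S)\subseteq\Sel_\ell(E^d,K)$, i.e.\ that for every prime $\fp$ of $K$ the restriction of a class in $\beta(\mathcal S)$ to $\cH^1(\Gal(\overline{K_\fp}/K_\fp),E^d(\overline{K_\fp}))$ vanishes, equivalently its restriction to $\cH^1(\Gal(\overline{K_\fp}/K_\fp),E^d(\overline{K_\fp})[\ell])$ lies in the Kummer image of $E^d(K_\fp)/\ell E^d(K_\fp)$. I would organize this by reduction type: (i) at $\fp\nmid \ell N(E)$ that is unramified in $M_0$, our classes are unramified and the unramified part of the local $\cH^1$ of $E^d(\overline{K_\fp})[\ell]$ equals the Kummer image; (ii) at $\fl$ one uses that $P$ is not in the kernel of reduction modulo $\fl$ together with condition (2); (iii) at $\fq\mid 2$ one uses condition (1); (iv) at the remaining $\fp\mid N(E)$ with $\fp\notin S_E$ one separates the potentially good case, the Tate case, and the potentially multiplicative non-Tate case: in each, the Artin symbol prescribed in (3) pins down the splitting of $\fp$ in $M_0$, after which Hilbert's Theorem 90 applied to the $\mu_\ell(\varepsilon)$-quotient — via the Tate parametrization, respectively via the kernel of reduction over the extension where $E$ acquires good reduction — forces the local class to be Kummer; (v) at the archimedean places, the local $\cH^1$ vanishes since $\ell$ is odd, while $d$ negative makes these places complex in $M_0$. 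I expect step (iv) to be the genuine obstacle: it requires the careful case-by-case analysis of how each $\fp\mid N(E)$ decomposes in $M_0\cdot K(E[\ell])$ and a matching computation of $E^d(K_\fp)/\ell E^d(K_\fp)$, and this is precisely where the clauses of (1)--(3) and the hypothesis on $P$ are consumed. Granting this, $\beta$ embeds a group of order $\cl_{S_E,u}(K(\sqrt d))[\ell]$ into $\Sel_\ell(E^d,K)$, giving \eqref{eqn2}.
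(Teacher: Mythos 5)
Your proposal does follow the paper's route — it is a cohomological repackaging of Frey's blueprint, and the paper's own proof proceeds through exactly the same two ingredients (the paper's Lemmas \ref{4.3} and \ref{4.4}): produce elements of $\Sel_\ell(E^d,K)$ out of $H_{S_E,u}(K(\sqrt d))$, and show $\delta$ acts as $-\id$ on that Galois group. However, there are two concrete gaps.

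\emph{The count $\#\mathcal{S}=\cl_{S_E,u}(K(\sqrt{d}))[\ell]$.} You argue that the $\delta$-invariant part of $H_{S_E,u}(K(\sqrt d))$ vanishes because the descended cyclic $\ell$-extension $M_2/K$ is unramified outside $S_E$ and $\ell\nmid\cl(K)$. But $\ell\nmid\cl(K)$ kills only those cyclic $\ell$-extensions of $K$ unramified \emph{everywhere}; it says nothing about an $M_2/K$ that ramifies at some $\fp\in S_E$, which your argument has not excluded. This is exactly where the paper's Lemma \ref{4.4} invokes the hypothesis $\chi_{\cH}(\fp)\neq 0$ for $\fp\in S_E$ (via Remark \ref{2.4}) to conclude that $M_2/K$ is unramified at the primes of $S_E$ as well, after which $\ell\nmid\cl(K)$ applies. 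Your sketch never consumes the Hecke-character hypothesis, which is a signal that this step is incomplete.

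\emph{Local triviality at $\fp\in S_E$.} Your case division (i)--(v) handles $\fp\nmid\fl N(E)$, the prime $\fl$, $\fq\mid 2$, $\fp\mid N(E)$ with $\fp\notin S_E$, and the archimedean places. The primes $\fp\in S_E$ are missing. These are precisely the primes at which $M/K(\sqrt d)$ is permitted to ramify, so the splitting field of your cocycle can be fully ramified there, and this is the case where the paper's Lemma \ref{4.3} does the most work: $\fp\in S_E$ forces $\ord_\fp(j_E)<0$ and $\ord_\fp(\Delta_E)\not\equiv 0\pmod{\ell}$, whence $E^d/K_\fp(\sqrt d)$ is a Tate curve, $P$ corresponds to an $\ell$th root of unity under the Tate parametrization (because $q$ is not an $\ell$th power), and Hilbert~90 over the cyclic degree-$\ell$ local extension makes the class locally trivial. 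Without this case the Selmer inclusion is unproven, and it is the one place where the definition of $S_E$ is actually used. You correctly anticipate that a case analysis at primes of bad reduction is ``the genuine obstacle,'' but you have the wrong primes in view: the delicate case is $\fp\in S_E$, not $\fp\mid N(E)$ with $\fp\notin S_E$.

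One smaller remark: your injectivity argument for $\beta$ asserts that $K(\zeta_\ell)/K$ is ramified at $\fl$, which can fail if $e_\fl(K/\mQ)\geq \ell-1$; Theorem \ref{1.3} imposes no degree bound on $K$, so this deserves a word. (The paper avoids the issue by never phrasing things this way: it keeps track of splitting fields $M$ of its cocycles directly.)
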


We also prove a stronger, more explicit version of Theorem \ref{1.3} in the form of a double divisibility statement, which completely generalizes \cite[Theorem]{frey1987selmer}. 
\begin{thmx}\label{C}
Let $K$ be a Galois number field of degree $n\leq 5$ such that $N_{K/\mQ}(\fq) = 2   \text{ for all }\fq|2$. Choose $\ell \in S(n)\setminus\brk{2,3} $ such that $\ell \nmid \cl(K)$ and $\zeta_{\ell} \notin K $. Let $E/K$ be an elliptic curve over $K$ with a $K$-rational point $P$ of order $\ell$; let $\chi_{\cH}$ denote a primitive Hecke character of $K $ with order $\ell$; let $\fq$ denote a prime ideal of $\cO_K$ that lies above 2; and let $\fl$ denote a prime ideal of $\cO_K$ that lies above $\ell$. If $[K:\mQ] = 5$ and $\ell = 5$, then we must make the added assumption that $(\ell)\cO_K$ is not totally ramified. Suppose that $P$ is not contained in the kernel of reduction modulo $\fl$; in particular, this means that $E$ is not supersingular modulo $\fl$ if $\ord_{\fl}(j_E) \geq 0$. Let $\widetilde{S}_E$ and $S_E$ be the sets of primes
\begin{align*}
\widetilde{S}_E &:= \brk{\fp | N(E) :\chi_{\cH}(\fp) \neq 0,\, \ord_{\fp}(\Delta_{E}) \not\equiv 0 \pmod \ell },\\
S_E &:= \{\fp \in \widetilde{S}_E : \ord_{\fp}(j_{E}) < 0\}.
\end{align*}
Suppose that $d \in \cO_K\unit/(\cO_K^{\times})^2$ is negative, coprime to $\fl \cdot  N(E)$, and satisfies the following divisibility and Artin symbol conditions where $\langle \delta \rangle = \Gal(K(\sqrt{d})/K)$:
\begin{enumerate}
\item if $\fq | N(E)$, then $\fq | \Delta_{K(\sqrt{d})/K}$;
\item if $\ord_{\fl}(j_{E}) < 0$, then $\pwr{\frac{K(\sqrt{d})/K}{\fl}} = \delta$;
\item if $\fp|N(E)$ is a prime of $K$ with $\fp\notin S_E$, then 
\begin{enumerate}
\item[$\bullet$] if $\ord_{\fp}(j_{E}) \geq 0$, then $\pwr{\frac{K(\sqrt{d})/K}{\fp}} = \delta$;
\item[$\bullet$] if $\ord_{\fp}(j_{E}) < 0 \aad E/K_{\fp} \text{ is a Tate curve},$ then $\pwr{\frac{K(\sqrt{d})/K}{\fp}} = \delta$;
\item[$\bullet$] otherwise, $\pwr{\frac{K(\sqrt{d})/K}{\fp}} = \id$.
\end{enumerate}
\end{enumerate}
Then we have the following double divisibility
\begin{equation}\label{eqn1}
\cl_{S_E,u}(K(\sqrt{d}))[\ell] \Big{|} \# \Sel_{\ell}(E^d,K) \Big{|} \cl_{\widetilde{S}_E,u}(K(\sqrt{d}))[\ell]\cdot \cl_{S_E}(K')[\ell](\chi_{\ell}),
\end{equation}
where $K'$ is the subfield of $K(\sqrt{d},\zeta_{\ell})$ of index 2 containing neither $\zeta_{\ell}$ nor $\sqrt{d}$. 
% in particular, $K'$ is the maximal real subfield of $K(\sqrt{d},\zeta_{\ell})$.
\end{thmx}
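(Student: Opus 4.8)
The lower bound $\cl_{S_E,u}(K(\sqrt d))[\ell]\mid\#\Sel_\ell(E^d,K)$ is precisely Theorem~\ref{1.3}, so the plan is to establish only the complementary divisibility $\#\Sel_\ell(E^d,K)\mid \cl_{\widetilde{S}_E,u}(K(\sqrt d))[\ell]\cdot\cl_{S_E}(K')[\ell](\chi_\ell)$. Write $F=K(\sqrt d)$ and let $\chi_d$ be the quadratic character of $\Gal(F/K)=\langle\delta\rangle$. The starting point is the exact sequence of $G_K$-modules furnished by the $K$-rational point $P$: the line $\langle P\rangle\subset E[\ell]$ is $G_K$-stable with trivial action, the Weil pairing gives $E[\ell]/\langle P\rangle\cong\mu_\ell$, and twisting by $\chi_d$ yields
\[
0\longrightarrow(\mZ/\ell)(\chi_d)\longrightarrow E^d[\ell]\xrightarrow{\ \pi\ }\mu_\ell(\chi_d)\longrightarrow 0 .
\]
Taking Galois cohomology gives $i_*\colon \cH^1(G_K,(\mZ/\ell)(\chi_d))\to\cH^1(G_K,E^d[\ell])$ and $\pi_*\colon\cH^1(G_K,E^d[\ell])\to\cH^1(G_K,\mu_\ell(\chi_d))$, and since $\Sel_\ell(E^d,K)\subseteq\cH^1(G_K,E^d[\ell])$ the first isomorphism theorem applied to $\pi_*$ restricted to the Selmer group gives
\[
\#\Sel_\ell(E^d,K)=\#\,\pi_*\bigl(\Sel_\ell(E^d,K)\bigr)\cdot\#\bigl(\Sel_\ell(E^d,K)\cap\ker\pi_*\bigr).
\]
It therefore suffices to bound the first factor by $\cl_{\widetilde{S}_E,u}(F)[\ell]$ and the second by $\cl_{S_E}(K')[\ell](\chi_\ell)$.

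For the $\mu_\ell$-factor I would first use $\gcd(2,\ell)=1$, inflation--restriction, and Hilbert's Theorem~90 to identify $\cH^1(G_K,\mu_\ell(\chi_d))$ with the subspace of $\cH^1(G_F,\mu_\ell)\cong F^\times/(F^\times)^\ell$ on which $\delta$ acts by $-1$. The core of the step is then a prime-by-prime analysis showing that the image under $\pi_*$ of any Selmer class, viewed in $F^\times/(F^\times)^\ell$, is little ramified outside $\widetilde{S}_E$: for $\fp\nmid\ell N(E)$ this is good reduction together with the local Selmer condition; for $\fp\mid N(E)$ with $\fp\notin\widetilde{S}_E$ one feeds the reduction-type dichotomy of Section~\ref{S2} (good reduction over an auxiliary extension in Case~1, a Tate parametrization in Case~2) into Lemma~\ref{2.15} together with the prescribed value of $\pwr{\frac{F/K}{\fp}}$ from hypothesis~(3); at $\fq\mid2$ one invokes hypothesis~(1) and the assumption $N_{K/\mQ}(\fq)=2$; and at $\fl\mid\ell$ one uses hypothesis~(2) and that $P$ avoids the kernel of reduction modulo $\fl$. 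The degree bound $n\le5$ is what keeps the local fields $K_\fq/\mQ_2$ and $K_\fl/\mQ_\ell$ small enough for the wildly ramified cases, and the supplementary hypothesis for $n=5,\ell=5$ rules out the remaining totally ramified configuration at $\ell$. One then concludes, via the class field theory dictionary of Section~\ref{S2} (cf.\ Remark~\ref{2.6}), that $\pi_*(\Sel_\ell(E^d,K))$ lands in a subgroup of order dividing $\cl_{\widetilde{S}_E,u}(F)[\ell]$.

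For the $(\mZ/\ell)(\chi_d)$-factor, a class in $\Sel_\ell(E^d,K)\cap\ker\pi_*$ is $i_*$ of a class in $\cH^1(G_K,(\mZ/\ell)(\chi_d))$ satisfying the pulled-back local conditions, so it suffices to bound the order of the subgroup $V\subseteq\cH^1(G_K,(\mZ/\ell)(\chi_d))$ that these conditions define. I would adjoin $\zeta_\ell$ and decompose under $\Gal(K(\sqrt d,\zeta_\ell)/K)$, which is legitimate because the relevant degrees divide $2(\ell-1)$ and hence are prime to $\ell$. The key observation is that over the index-$2$ subfield $K'\subset K(\sqrt d,\zeta_\ell)$ containing neither $\zeta_\ell$ nor $\sqrt d$ the characters $\chi_d$ and $\chi_\ell$ restrict to the same quadratic character of $\Gal(K(\sqrt d,\zeta_\ell)/K')$, so $(\mZ/\ell)(\chi_d)\cong\mu_\ell$ as $G_{K'}$-modules and $V$ is computed by Kummer classes over $K'$ lying in the $\chi_\ell$-conjugation eigenspace $H(\chi_\ell)$ of Section~\ref{S2} (applied with $L=K'$, which is normal over $K$ with cyclic Galois group). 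Rerunning the prime-by-prime analysis for these classes, the admissible ramification shrinks from $\widetilde{S}_E$ to $S_E$: at a prime of $\widetilde{S}_E\setminus S_E$ the Case~1 (good-reduction) analysis forces the class to be unramified there, whereas at a prime of $S_E$ the Case~2 Tate parametrization permits it. Hence $V$ embeds into $H_{S_E}(K')(\chi_\ell)$ and $\#\bigl(\Sel_\ell(E^d,K)\cap\ker\pi_*\bigr)\mid\cl_{S_E}(K')[\ell](\chi_\ell)$. Combining the two bounds with Theorem~\ref{1.3} yields~\eqref{eqn1}.

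The main obstacle, exactly as in Frey's original argument, is the exhaustive local case analysis underlying both factors: for each bad prime $\fp$ one has to pin down the decomposition and inertia of $\fp$ in the compositum $F\cdot K(E[\ell])$ --- equivalently, understand $E^d$ over $K_\fp$ and over $K_\fp(\sqrt d)$ in every reduction type --- and check that the Artin symbol condition imposed on $d$ is precisely what makes the Selmer image unramified at $\fp$; the archimedean, $2$-adic, and $\ell$-adic places, with their wild ramification and the $n=5,\ell=5$ exception, are the most delicate, and correctly isolating the auxiliary field $K'$ together with the cyclotomic eigenspace in the second factor is the most intricate bookkeeping in the argument.
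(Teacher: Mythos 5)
The cohomological packaging via the short exact sequence $0\to(\mZ/\ell)(\chi_d)\to E^d[\ell]\to\mu_\ell(\chi_d)\to 0$ is a clean reformulation of the paper's approach (the paper instead builds the splitting fields $\overline M_1(\phi)$, $\overline M_2(\phi)$ from the cocycle values $\phi(\alpha_1)=P$, $\phi(\alpha_2)=Q$, and the two constructions are secretly the same), but you have matched the two pieces of your decomposition to the wrong class-group quantities. In the paper's Section~\ref{4.1}, $\overline M_2(\phi)$ is cut out by killing $\alpha_2$ (the $Q$-direction) and retains $\alpha_1$ (with $\phi(\alpha_1)=P$), so it is the splitting field attached to $\Sel_\ell(E^d,K)\cap\ker\pi_*$; it is a dihedral-over-$K$ extension of $K(\sqrt d)$ and gets bounded by $\cl_{\widetilde{S}_E,u}(K(\sqrt d))[\ell]$. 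Dually, $\overline M_1(\phi)$ retains $\alpha_2$ (with $\phi(\alpha_2)=Q$ mapping to a generator of $\mu_\ell$), so it is the splitting field attached to $\pi_*(\Sel_\ell(E^d,K))$; it is a degree-$\ell$ extension of $K'$ with the $\chi_\ell$-conjugation eigenvalue and gets bounded by $\cl_{S_E}(K')[\ell](\chi_\ell)$. You have assigned $\pi_*(\Sel)$ to $\cl_{\widetilde{S}_E,u}(F)[\ell]$ and $\Sel\cap\ker\pi_*$ to $\cl_{S_E}(K')[\ell](\chi_\ell)$, which is the reverse.

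This is not just a relabeling, because the analysis you sketch for each factor is tailored to the wrong one. For the $\mu_\ell$-factor you propose identifying $\cH^1(G_K,\mu_\ell(\chi_d))$ inside $F^\times/(F^\times)^\ell$ and reading off a bound by $\cl_{\widetilde{S}_E,u}(F)[\ell]$; but since $\zeta_\ell\notin F=K(\sqrt d)$, Kummer classes in $F^\times/(F^\times)^\ell$ do not classify cyclic degree-$\ell$ extensions of $F$, so they cannot land in the $S$-ray class group $\cl_{\widetilde{S}_E,u}(F)[\ell]$ by Remark~\ref{2.6}; the module $\mu_\ell(\chi_d)$ only becomes trivial over $K(\sqrt d,\zeta_\ell)$, and after taking the correct eigenspace under $\Gal(K(\sqrt d,\zeta_\ell)/K)$ these are exactly classes in $\Hom(G_{K'},\mZ/\ell)$ satisfying the $\chi_\ell$-conjugation condition, i.e.\ $\cl_{S_E}(K')[\ell](\chi_\ell)$. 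Conversely, for the $(\mZ/\ell)(\chi_d)$-factor you descend to $K'$ and invoke $(\mZ/\ell)(\chi_d)\cong\mu_\ell$ as $G_{K'}$-modules; but the natural and shorter route is that $(\mZ/\ell)(\chi_d)$ is already trivial over $F$, so $\cH^1(G_K,(\mZ/\ell)(\chi_d))\hookrightarrow\Hom(G_F,\mZ/\ell)^{\delta=-1}$, i.e.\ dihedral degree-$2\ell$ extensions of $K$ containing $K(\sqrt d)$ exactly as in Lemma~\ref{4.3}, which is what produces the $\cl_{\widetilde{S}_E,u}(K(\sqrt d))[\ell]$ bound. The two class-group quantities are genuinely different, so the swap is a real gap: to repair the argument, interchange the assignments and redistribute the local ramification analysis accordingly (in particular the tighter set $S_E$ and the $\chi_\ell$-eigenspace belong with $\pi_*(\Sel)$ and $K'$, while the looser set $\widetilde{S}_E$ and the $\delta=-\mathrm{id}$ condition belong with $\ker\pi_*$ and $K(\sqrt d)$).
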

\begin{remark}
In words, \eqref{eqn1} states that the order of the $\ell$-torsion of the $S_E$-ray class group of $K(\sqrt{d})$ divides the order of $\Sel_{\ell}(E^d,K)$, and the order of $\Sel_{\ell}(E^d,K)$ divides the order of the $\ell$-torsion of the $\widetilde{S}_E$-ray class group of $K(\sqrt{d})$ times the degree of the maximal abelian extension $K''$ of $K'$ of exponent $\ell$ unramified outside of $S_E \cup \brk{\fl}$ such that the Galois group $\Gal(K'/K)$ acts on $\Gal(K''/K)$ by $\chi_{\ell}\varepsilon_d$, where $\varepsilon_d$ is the character prescribing the Galois action on $\sqrt{d}$.
\end{remark}
Once we have proved Theorems \ref{1.3}, \ref{C}, we can immediately extend the divisibility statements \eqref{eqn2}, \eqref{eqn1} to elliptic curves $E$ defined over $\mQ$ by considering the values of $S_{\mQ}(n)$.
\begin{ccoro}{C}\label{B}
Let $E/\mQ$ be an elliptic curve defined over $\mQ$. For some Galois number field $K$, suppose that $E_K$ attains a $K$-rational point $P$ of order $\ell $ where $\ell \in S_{\mQ}(n)\setminus\brk{2,3} $ such that $\ell \nmid \cl(K)$ and $\zeta_{\ell} \notin K $. In keeping with the notation and assumptions of Theorem \ref{1.3}, we can produce examples of quadratic twists $E_K^d $ that satisfy the divisibility statement \eqref{eqn2}.
\end{ccoro}

\begin{ccoro}{D}\label{D}
Let $E/\mQ$ be an elliptic curve defined over $\mQ$; let $E_K$ denote the base change of this curve to a Galois number field of degree $n\leq 20$ such that $N_{K/\mQ}(\fq) = 2   \text{ for all }\fq|2$. Choose $\ell \in S_{\mQ}(n)\setminus\brk{2,3} $ such that $\ell \nmid \cl(K)$, $\zeta_{\ell} \notin K $, and the ramification index $e_{\fl}(K/\mQ)$ satisfies $1>e_{\fl}(K/\mQ)/(\ell-1) - 1$. Suppose that $E_K$ attains a $K$-rational point $P$ of order $\ell $, then in keeping with the notation and assumptions of Theorem \ref{C}, we can produce examples of quadratic twists $E_K^d $ that satisfy the double divisibility statement \eqref{eqn1}.
\end{ccoro}

We can also generalize \cite[Corollary]{frey1987selmer}, which we stated as Theorem \ref{1.1}.
\begin{ccoro}{E}\label{E}
Let $(E,\, \ell,\,  K,\, d)$ be as in Theorem \ref{C} or in Corollary \ref{D}. If $\widetilde{S}_E = \emptyset$, then $\Sel_{\ell}(E^d,K)$ is non-trivial if and only if the $\ell$-torsion of the class group of $K(\sqrt{d})$ is non-trivial, in particular 
$$\cl(K(\sqrt{d}))[\ell]  \Big{|} \# \Sel_{\ell}(E^d,K)  \Big{|} (\cl(K(\sqrt{d}))[\ell])^2.$$
\end{ccoro}

\begin{remark}
In his Ph.D. thesis \cite{mailhot2003Thesis}, Mailhot was able to recover and sharpen \cite[Theorem]{frey1987selmer} for elliptic curves defined over $\mQ$ using purely cohomological methods. His refinement comes from prescribing a splitting behavior of primes above $K'$ instead of just a non-ramified condition. We remark that our methods and results are disjoint, however, we believe that \cite[Corollary~2.17]{mailhot2003Thesis} can be generalized to elliptic curves defined over number fields $K$, using Theorem \ref{C}.
\end{remark}

\section{Proof of Theorem \ref{1.3}}\label{S3}
In this section, we prove the divisibility statement \eqref{eqn2}. Before we proceed, we make a remark about some of the prime assumptions of Theorem \ref{1.3}.

\begin{remark}[Prime assumptions]
If $\ord_{\fp}(j_{E}) < 0$, then we have that $E/{K_{\fp}}$ has a Tate parametrization. The second condition $\ord_{\fp}(\Delta_{E}) \not\equiv 0 \pmod \ell$ assists us in Lemma \ref{4.3}. In short, it allows us to understand ramification in the $\ell$-division field of $E_{K_{\fp}}$. The final condition $\chi_{\cH}(\fp) \neq 0$ is used in Lemma \ref{4.4} and is an analogue of Frey's condition that $p \equiv -1 \pmod \ell$. Moreover, this condition allows us to deduce, using Remark \ref{2.4}, that for a cyclic extension $M_2/K$ of degree $\ell$, $\fp$ is unramified in $M_2$.
\end{remark}
The first step in the proof is to exhibit an element in $\Sel_{\ell}(E^d,K)$.
\begin{lemma}\label{4.3}
Let $\ell > 3$ be a rational prime; let $M/K$ be a non-abelian Galois extension of degree $2\ell$ containing $K(\sqrt{d})$ that is unramified over this field outside of $S_E $; let $\alpha$ be a generator of $\Gal(M/K(\sqrt{d}))$; and let $\phi$ the element in $\cH^1(\Gal(M/K),E^d(M)[\ell])$ determined by $\phi(\alpha) = P$, where $P$ is a $K$-rational point of order $\ell$. Then $\phi$ is an element of $\Sel_{\ell}(E^d,K)$.
\end{lemma}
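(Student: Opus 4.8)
\emph{Plan.}
The plan is to first check that $\phi$ is a well-defined cohomology class, and then to show, place by place, that its inflation to $G_K$ is everywhere locally trivial after pushing forward along $\alpha$, which is exactly the Selmer condition. For Step 1: a non-abelian group of order $2\ell$ with $\ell$ odd is dihedral, so $\Gal(M/K)=\langle\alpha,\widetilde\delta\rangle$ with $\alpha^\ell=\widetilde\delta^2=1$ and $\widetilde\delta\alpha\widetilde\delta^{-1}=\alpha^{-1}$, where $\langle\alpha\rangle=\Gal(M/K(\sqrt d))$ and $\widetilde\delta$ restricts to the generator $\delta$ of $\Gal(K(\sqrt d)/K)$. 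Fixing the $K(\sqrt d)$-isomorphism $\psi\colon E^d\xrightarrow{\sim}E$, the twisting identity ${}^{\sigma}\psi=[\varepsilon_d(\sigma)]\circ\psi$ shows that $\Gal(M/K)$ acts on $P$ — transported into $E^d(M)[\ell]$ via $\psi^{-1}$ — through the quadratic character $\varepsilon_d$, so $\alpha\cdot P=P$ and $\widetilde\delta\cdot P=-P$. I will take $\phi$ to be the cocycle with $\phi(\alpha)=P$ and $\phi(\widetilde\delta)=0$, extend by the cocycle rule, and verify that the defining relations of the group evaluate to zero: $\phi(\alpha^\ell)=\ell P=0$, $\phi(\widetilde\delta^2)=0$, and $\phi(\widetilde\delta\alpha\widetilde\delta\alpha)=\widetilde\delta\cdot P+(\widetilde\delta\alpha\widetilde\delta)\cdot P=-P+P=0$. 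Thus $\phi\in Z^1(\Gal(M/K),E^d(M)[\ell])$, and the class is independent of the choice $\phi(\widetilde\delta)=0$: any two cocycles with $\phi(\alpha)=P$ differ by one vanishing on $\alpha$, whose value at $\widetilde\delta$ lies in the $(-1)$-eigenspace of $\widetilde\delta$ on $E^d(K(\sqrt d))[\ell]$, which (as $\ell$ is odd) equals $(\widetilde\delta-1)E^d(K(\sqrt d))[\ell]$, so the difference is a coboundary. Inflating along $G_K\twoheadrightarrow\Gal(M/K)$ gives $[\phi]\in\cH^1(G_K,E^d(\overline K)[\ell])$.

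\emph{Reduction to local triviality; the easy places.}
By the definition of $\Sel_\ell(E^d,K)$, it is enough to show that for every place $\fp$ of $K$ the class $\res_\fp[\phi]$ lies in the image of the local Kummer map $E^d(K_\fp)/\ell E^d(K_\fp)\hookrightarrow\cH^1(G_{K_\fp},E^d(\overline{K_\fp})[\ell])$. For archimedean $\fp$ this is automatic, since $\ell$ is odd and $[\overline{K_\fp}:K_\fp]\le2$. For a finite prime $\fp\nmid\ell$ at which $E^d$ has good reduction and at which $M/K$ is unramified — which, since $d$ is coprime to $\fl N(E)$ and $M/K(\sqrt d)$ is unramified outside $S_E\subseteq\{\fp\mid N(E)\}$, is the case for all $\fp\nmid 2\ell N(E)$ — the class $\res_\fp[\phi]$ is unramified, and unramified classes at good-reduction primes away from $\ell$ lie in the Kummer image. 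This leaves only the primes above $2$, the primes above $\ell$, and the primes dividing $N(E)$ (the last subdivided into $S_E$ and its complement in $\{\fp\mid N(E)\}$).

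\emph{The bad primes.}
For the finitely many remaining primes I will run the local computation that the hypotheses of Theorem \ref{1.3} are designed to supply. At $\fq\mid2$, hypothesis (1) guarantees that $K(\sqrt d)/K$ is ramified, which is what forces $\res_{\fq}[\phi]$ into the Kummer image. At $\fl\mid\ell$, hypothesis (2) together with the assumption that $P$ is not in the kernel of reduction modulo $\fl$ determines $\res_{\fl}[\phi]$ through the good-reduction (or Tate) local model of $E^d$. At $\fp\mid N(E)$ with $\fp\notin S_E$, each of the three cases of hypothesis (3) forces either that $E^d$ acquires good reduction over $K_\fp(\sqrt d)$ or that the relevant piece of $\cH^1$ is unramified, whence $\res_\fp[\phi]$ is Kummer-trivial. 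Finally at $\fp\in S_E$, where $E$ is potentially multiplicative with $\ord_\fp(\Delta_E)\not\equiv0\pmod\ell$ and $\chi_{\cH}(\fp)\neq0$, I will use the Tate parametrization — the divisibility condition pins down the ramification of $K_\fp(E^d[\ell])$ — together with Hilbert's Theorem $90$ applied to the $\mu_\ell$-quotient $E^d(\overline K)[\ell]/\langle P\rangle$ to see that $\res_\fp[\phi]$ is locally trivial. These are precisely the computations isolated in Lemma \ref{4.4} and the lemmas following it, so here it is enough to invoke them; assembling all cases gives $[\phi]\in\Sel_\ell(E^d,K)$.

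\emph{Main obstacle.}
The crux is the last step at the primes of potentially multiplicative reduction — both $\fp\in S_E$ and the Tate sub-case of $\fp\mid N(E)$, $\fp\notin S_E$. There the image of the local Kummer map is a proper subgroup (of index $\#E^d(K_\fp)[\ell]$), so local triviality is not automatic and genuinely requires the Tate parametrization, the congruence $\ord_\fp(\Delta_E)\not\equiv0\pmod\ell$, the nonvanishing $\chi_{\cH}(\fp)\neq0$, and Hilbert 90. The primes above $\ell$ — where, in the boundary case $[K:\mQ]=5$, $\ell=5$, the hypothesis that $(\ell)\cO_K$ is not totally ramified (and the analogous bound on $e_{\fl}$ in Corollary \ref{D}) enters — are the other delicate point.
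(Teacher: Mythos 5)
Your structural outline of the argument is sound and close to the paper's own: an explicit dihedral presentation of $\Gal(M/K)$ to produce the cocycle, then a place-by-place check of local triviality. The opening step (constructing $\phi$ and verifying the cocycle relations from $\widetilde\delta\alpha\widetilde\delta^{-1}=\alpha^{-1}$ and $\widetilde\delta\cdot P=-P$) is a more explicit version of the paper's observation that $\cH^1(\Gal(M/K),E^d(M)[\ell])=\cH^1(\Gal(M/K(\sqrt d)),E^d(M)[\ell])^\delta$, and is fine.

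The genuine gap is in the final paragraph. You write that the local computations at the bad primes ``are precisely the computations isolated in Lemma \ref{4.4} and the lemmas following it, so here it is enough to invoke them.'' This is not correct. Lemma \ref{4.4} shows that $\delta$ acts as $-\id$ on $H_{S_E,u}(K(\sqrt d))$; it contains no local computation of any kind. The lemmas that do concern local behaviour (Lemmas \ref{3.1}, \ref{3}, \ref{3.2}, \ref{3.3}) prove the \emph{converse} direction — that the splitting fields of elements already known to lie in $\Sel_\ell(E^d,K)$ have controlled ramification, yielding the upper bound in Theorem \ref{C} — and several of them \emph{cite} Lemma \ref{4.3} in their proofs, so invoking them here would be circular. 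The local triviality case analysis is the actual content of Lemma \ref{4.3} and has to be carried out inside its proof: when $\bigl(\tfrac{K(\sqrt d)/K}{\fp}\bigr)=\delta$ and $\fP_M$ is fully ramified, one shows $\fp\in S_E$, that $E^d/K_\fp(\sqrt d)$ is a Tate curve, that $\ord_\fp(\Delta_E)\not\equiv0\pmod\ell$ forces $\tau^{-1}(P)=\zeta_\ell^\alpha$ (the $q^{\beta/\ell}$ part must vanish), and then applies Hilbert~90 to the cyclic degree-$\ell$ extension $M_\fP/K_\fp(\sqrt d)$ to write $\zeta_\ell=\alpha x/x$; when $\bigl(\tfrac{K(\sqrt d)/K}{\fp}\bigr)=\id$ with $\fp\neq\fl$, one splits over $K_\fp(\zeta_\ell)$ and descends by a degree prime to $\ell$; and at $\fp=\fl$ with $\ord_\fl(j_E)\ge0$ one passes to an extension $N$ of degree prime to $\ell$ where $E$ has good reduction and uses vanishing of $\cH^1$ for unramified extensions with good reduction. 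Your Hilbert~90 remark also misstates the mechanism: the paper does not apply it to the $\mu_\ell$-quotient $E^d[\ell]/\langle P\rangle$ but directly to $M_\fP^\times$ via the Tate parametrisation, where $P$ itself (not a quotient image) is identified with $\zeta_\ell$.

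A smaller point: your claim that $M/K$ is unramified at every $\fp\nmid 2\ell N(E)$ is false as stated, since $K(\sqrt d)/K$ ramifies at primes dividing $d$. This is harmless — at such $\fp$ the class $\res_\fp[\phi]$ dies over $K_\fp(\sqrt d)$ (unramified, good reduction, $\fp\nmid\ell$), and the kernel of restriction along the degree-$2$ extension is $2$-torsion while $[\phi]$ has order $\ell$ — but that corestriction step should be said rather than swept into an incorrect unramifiedness assertion.
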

\begin{proof}
First, we need to show that there exists some element $$\phi \in \cH^1(\Gal(M/K),E^d(M)[\ell])$$ whose restriction $\overline{\phi}$ to $\Gal(M/K(\sqrt{d})) = \langle \alpha \rangle$ is given by $\overline{\phi}(\alpha) = P$. We identify $E^d(M)[\ell]$ with $E(M)[\ell] = \langle P\rangle$. Since $E^d(K(\sqrt{d}))[\ell] = \langle P \rangle$ and $\delta (P) = -P$ where $\langle \delta \rangle = \Gal(K(\sqrt{d})/K)$, we have invariance of $\phi$ under $\delta$ from the fact that $\delta \alpha \delta  = \alpha\inv $. Since $$\cH^1(\Gal(M/K),E^d(M)[\ell]) = \cH^1(\Gal(M/K(\sqrt{d})),E^d(M)[\ell])^{\delta},$$
our assertions follows.

Hence it remains to show that $\overline{\phi}$ is locally trivial when regarded as an element of 
$$\cH^1(\Gal(M/K(\sqrt{d})),E^d(M)).$$
We may restrict ourselves to primes $\fP_M | \fl \cdot N(E)$. By condition (1) of Theorem \ref{1.3}, the divisors of $\fq$ are unramified in $M/K(\sqrt{d})$ if $\fq | N(E)$, and hence we may assume that $\fP_M \nmid \fq$.

Assume that $\pwr{\frac{K(\sqrt{d})/K}{\fp}} = \delta$. In this case, $\fP_M$ is either fully ramified or decomposed (since $M/K$ is non-abelian). So assume that $\fP_M$ is fully ramified and divides $\fp$. Then $\fp \in S_E$ and in particular $\fp \neq \fl$ and $\ord_{\fp}(\Delta_{E_K}) \neq 0 \pmod \ell$. We claim that $E^d/K_{\fp}(\sqrt{d})$ is a Tate curve and that $P$ is contained in the connected component of the unity over $K_{\fp}(\sqrt{d})$ corresponding to an $\ell\tth$ root of unity $\zeta_{\ell}$. 

The fact that $E^d/K_{\fp}(\sqrt{d})$ is a Tate curve follows since $\fp \in S_E$ and so $\ord_{\fp}(j_E) < 0$. Since $\ord_{\fp}(\Delta_{E}) \neq 0 \pmod \ell$, we know that adjoining $q^{1/\ell}$ to $K_{\fp}(\sqrt{d})$, where $q$ is the $\fp$-adic period of $E$, is a non-trivial extension. Under the Tate parametrization $\tau$, we have that torsion points of order $\ell$ in $E^d(\overline{K_{\fp}(\sqrt{d})})[\ell]$ are of the form $\tau(\zeta_{\ell}^{\alpha}q^{\beta/\ell})$ where $\alpha,\beta \in \brk{1,\dots , \ell - 1}$. Since $P$ is a point of order $\ell$ defined over $K_{\fp}(\sqrt{d})$, we know that $\zeta_{\ell}^{\alpha} \in K_{\fp}(\sqrt{d})$ for some $\alpha\in \brk{1,\dots , \ell-1}$ and that $$\tau\inv(P) = \zeta_{\ell}^{\alpha}q^{\beta/\ell} \in K_{\fp}(\sqrt{d}).$$
In order for $\zeta_{\ell}^{\alpha}q^{\beta/\ell} \in K_{\fp}(\sqrt{d})$, we must have that $\beta = 0$ since $q$ is not an $1/\ell\tth$ power. Thus, $\tau\inv(P) = \zeta_{\ell}^{\alpha}$, and hence $P$ is contained in the connected component of the unity over $K_{\fp}(\sqrt{d})$ corresponding to an $\ell\tth$ root of unity $\zeta_{\ell}$. Since $M_{\fP}/K_{\fp}(\sqrt{d})$ is cyclic of degree $\ell$, we have that $\zeta_{\ell} = \alpha x / x$ for some $x\in M_{\fP}$ by Hilbert' Theorem 90, and therefore, $\overline{\phi}$ is trivial when considered in $\cH^1(\Gal(M_{\fP}/K_{\fp}),E^d(M_{\fP}))$.

%Since $M_{\fa_M}/K_{\fa}(\sqrt{d})$ is cyclic of degree $\ell$, there exists some $x\in M_{\fa_M}\unit$ such that $\zeta_{\ell} = \alpha(x)/x$; this is a result from local class field theory using Hilbert's Theorem 90.

Next assume that $\pwr{\frac{K(\sqrt{d})/K}{\fp}} = \id$ and $\fp \neq \fl$. Then $\ord_{\fp}(j_{E}) < 0$ and $E$ is a Tate curve over $K_{\fp}$, and so again $P$ corresponds to some $\ell\tth$ root of unity $\zeta_{\ell}$ under the Tate parametrization of $E = E^d$ over $K_{\fp}(\zeta_{\ell})$ and hence $\overline{\phi}$ is split by $K_{\fp}(\zeta_{\ell})$ as seen above. But since the degree of $K_{\fp}(\zeta_{\ell})$ over $K_{\fp}$ is prime to $\ell$, $\overline{\phi}$ is split over $K_{\fp}$ already, and thus $\overline{\phi}$ is locally trivial.

There is one remaining case: $\fp = \fl$ and $\ord_{\fl}(j_{E}) \geq 0$. Let $\fL_M | \fl$. By the assumption, $M/K$ is unramified at $\fL_M$, and we can find a normal extension $N/K$ of degree prime to $\ell$ such that $E$ has good reduction modulo all primes $\fL_N | \fl$. In particular, we may take $N = K(\zeta_{12},\sqrt[12]{\fl})$. Now $$\cH^1(\Gal(M_{\fL}\cdot N/K_{\fl}\cdot N),E^d(M_{\fL}\cdot N)) = 0$$ since the reduction of $E^d$ modulo $\fL$ is good and $M_{\fL}N/K_{\fl}N$ is unramified, and hence it follows that 
$$\cH^1(\Gal(M_{\fL}/K_{\fl}),E^d(M_{\fL})) = 0.$$
\end{proof}
Next, we look at the action of $\delta$ on $H_{S_E,u}(K(\sqrt{d}))$.
\begin{lemma}\label{4.4}
The generator $\langle \delta \rangle = \Gal(K(\sqrt{d})/K)$ acts as $-\id$ on the Galois group $H_{S_E,u}(K(\sqrt{d}))$.
\end{lemma}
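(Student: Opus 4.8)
The plan is to decompose $H := H_{S_E,u}(K(\sqrt d))$ under the action of $\langle\delta\rangle=\Gal(K(\sqrt d)/K)$ and show that the $+1$--eigenspace is trivial. Writing $L:=K(\sqrt d)$, since $H$ is abelian of exponent $\ell$ with $\ell$ odd, the relation $\delta^2=\id$ splits $H=H^+\oplus H^-$ with $\delta$ acting by $\pm 1$ on $H^{\pm}$, and the assertion is exactly $H^+=0$. I would argue by contradiction: a nontrivial $H^+$ will be shown to produce a nontrivial everywhere-unramified abelian extension of $K$ of exponent $\ell$, which cannot exist because $\ell\nmid\cl(K)$.

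First I descend to $K$. Since $S_E$ is a set of primes of $K$, the field $L_{S_E,u}$ is Galois over $K$, and conjugation by a lift of $\delta$ preserves the eigenspaces, so $H^-$ is normal in $\Gal(L_{S_E,u}/K)$; its fixed field $L^+$ is Galois over $K$ with $\Gal(L^+/L)\cong H^+$ and $\delta$ acting trivially on this quotient. As $[L:K]=2$ is coprime to $|H^+|$, Schur--Zassenhaus splits $1\to H^+\to\Gal(L^+/K)\to\langle\delta\rangle\to 1$, and triviality of the action makes it a direct product; setting $F:=(L^+)^{\langle\delta\rangle}$ gives $L^+=L\cdot F$ with $F\cap L=K$ and $\Gal(F/K)\cong H^+$ abelian of exponent $\ell$. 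Now I would check that $F/K$ is unramified outside $S_E$. Because $L_{S_E,u}/L$ is unramified outside $S_E$ and, as $d$ is coprime to $\fl\cdot N(E)$, $L/K$ is unramified at $\fl$ and along $S_E$, the ramified primes of $L^+/K$ lie over $S_E$ together with the finitely many primes where $L/K$ ramifies; at any such prime $\fp\notin S_E$ one has $I_{\fp}(L^+/L)=1$, so $I_{\fp}(L^+/K)$ embeds into $\Gal(L/K)=\langle\delta\rangle$, has order at most $2$, and meets the $H^+$--factor trivially — hence, $\ell$ being odd, it lies in the $\langle\delta\rangle$--factor $\Gal(L^+/F)$, and $\fp$ is unramified in $F/K$. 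At the archimedean places $F/K$ is unramified since its exponent $\ell$ is odd. So $F/K$ is abelian of exponent $\ell$ and unramified outside $S_E$.

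It remains to kill $F$. If $F\ne K$ it contains a cyclic degree-$\ell$ subextension $F_0/K$, still unramified outside $S_E$, and for $\fp\in S_E$ we have $\fp\nmid\ell$. At this point I would invoke the defining condition $\chi_{\cH}(\fp)\ne 0$ for $\fp\in S_E$ — the analogue of Frey's hypothesis $p\equiv -1\pmod\ell$, read through Remark~\ref{2.4} — to rule out ramification of $F_0/K$ at $\fp$; granting that, $F_0/K$ is everywhere unramified, abelian of exponent $\ell$, so $F_0=K$ because $\ell\nmid\cl(K)$, contradicting $H^+\ne 0$. Hence $H^+=0$. I expect this last local step to be the main obstacle: converting ``$\chi_{\cH}(\fp)\ne 0$'' into a genuine constraint forbidding ramification over $S_E$ in an abelian exponent-$\ell$ extension of $K$ (equivalently, bounding the $\ell$-part of the $S_E$-ray class group of $K$), and separately handling a divisor of $\ell$ lying in $S_E$ (the wildly ramified case), which is where the added hypothesis of Theorem~\ref{C} on $(\ell)\cO_K$ (for $n=\ell=5$) enters; this case does not occur under the hypotheses of Theorem~\ref{1.3}. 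A shortcut avoiding Schur--Zassenhaus: each $x\in H^+$ satisfies $x=2^{-1}j(N(x))$ for the extension map $j$ and norm $N$ relating $H$ to the $S_E$-ray class group of $K$, so $H^+$ injects into the $\ell$-torsion of the latter, which the same input annihilates.
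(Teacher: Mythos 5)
Your proposal is correct and follows essentially the same approach as the paper: decompose $H = H^+ \oplus H^-$ by the $\delta$-action, descend the $H^+$-part to a cyclic degree-$\ell$ extension of $K$ unramified outside $S_E$, use $\chi_{\cH}(\fp)\neq 0$ to kill ramification at the primes of $S_E$, and contradict $\ell\nmid\cl(K)$. The paper descends by taking a cyclic subextension $M_1/K(\sqrt d)$ inside the fixed field of $H^-$ and extracting the odd-degree cyclic $M_2/K$ from the coprime-order structure of $\Gal(M_1/K)$ rather than invoking Schur--Zassenhaus for a complement $F$ — a packaging difference only — and your worry about converting $\chi_{\cH}(\fp)\neq 0$ into a ramification constraint is apt, since the paper treats that step just as tersely via Remark~\ref{2.4}.
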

\begin{proof}
We may write
$$H_{S_E,u}(K(\sqrt{d})) = H^- \oplus H^+$$
where $H^-$ is the part where $\delta$ acts as $-\id$, and $H^+$ the part with $\delta = \id$. Let $\widetilde{M} := M_{S_E,u}^{H^-}$, which is the fixed field of $M_{S_E,u}$ by $H^-$. Assume that $M_1$ is a subfield of $\widetilde{M}$ that is cyclic over $K(\sqrt{d})$. Hence $M_1/K$ is cyclic of degree $2\cdot [M_1 : K(\sqrt{d})]$. Let $M_2$ be the cyclic extension of $K$ with degree $[M_1:K(\sqrt{d})] $ contained in $M_1$. Then $M_2$ is unramifed outside of $S_E$. For $\fp \in S_E$, we have that $\chi_{\cH}(\fp) \neq 0$. Since $[M_2 : K] | \ell$ and $\ell \nmid \cl(K)$, it follows that $M_2$ is not contained in the Hilbert class field of $K$ and is unramified at all primes $K$. Thus, we have that $M_2 = K$, $M_1 = K(\sqrt{d})$ and hence $\widetilde{M} = K(\sqrt{d})$.
\end{proof}

\begin{proof}[Proof of Theorem \ref{1.3}]
The divisibility of $\#\Sel_{\ell}(E^d,K)$ by $\cl_{S_E,u}(K(\sqrt{d}))[\ell]$ follows from Lemmas \ref{4.3}, \ref{4.4} since our element $\phi \in \Sel_{\ell}(E^d,K)$ is induced by $\alpha \in \Gal(M/K(\sqrt{d}))$ and the action of $\langle \delta \rangle$ on $H_{S_E,u}(K(\sqrt{d}))$ does not affect the order of $\alpha$ when considered as an element of $H_{S_E,u}(K(\sqrt{d})).$ 
\end{proof}

\section{Proof of Theorem \ref{C}} \label{S4}
Before we proceed with a proof of Theorem \ref{C}, we wish to shed some light onto our assumptions. In general, our hypotheses allow us to control the ramification in cyclic extensions of $K(\sqrt{d})$.

\begin{remark}[Field assumptions]
We assume that our field $K$ is a number field of degree $n\leq 5$ such that $N_{K/\mQ}(\fq) = 2   \text{ for all }\fq|2$ and that for some $\ell \in S(n)\setminus\brk{2,3} $, $\ell \nmid \cl(K)$ and $\zeta_{\ell} \notin K $. The degree and norm condition appear in Lemma \ref{3.1} and allow us to deduce ramification conditions on prime divisors $\fQ_M|\fq$ where $M_1/K$ is cyclic. The condition that $\ell \nmid \cl(K)$ implies that there does not exist an extension $M_2/K$ of degree $\ell$ contained in the Hilbert class field of $K$; once again this gives us a ramification consequence. The assumption that $\zeta_{\ell} \notin K $ is subtle, but it allows for more ramification possibilities since Kummer theory does not restrict cyclic extensions. The final condition that $e_{\fl}(K/\mQ) \neq 5$ when $[K:\mQ] = 5$ and $\ell = 5$ is due to a deep result of Katz \cite{katz1980galois} concerning the injectivity of $\ell$-torsion under the reduction map; the assumption $1>e_{\fl}(K/\mQ)/(\ell-1) - 1$ from Theorem \ref{D} is the general condition. This assumption allows us to use the fact that prime to 2 torsion will inject under the reduction map.
\end{remark}

To prove Theorem \ref{C}, it suffices to prove the divisibility statement $$\# \Sel_{\ell}(E^d,K) \Big{|} \cl_{\widetilde{S}_E,u}(K(\sqrt{d}))[\ell]\cdot \cl_{S_E}(K')[\ell](\chi_{\ell}).$$
To begin, we discuss the Galois structure of the $\ell$-division field of elliptic curves $E/K$ from Theorem \ref{C}.

\subsection{Galois structure of splitting fields of $\ell$-covers of $E$}\label{4.1}
We want to determine the Galois group structure of splitting fields of elements in $\cH^1(G_K,E(\overline{K})[\ell])$ for elliptic curves having a $K$-rational point $P$ of order $\ell$. Recall that $\zeta_{\ell} \notin K$. Denote the $\ell$-division field by $K(E[\ell])$; this is the field obtained by adjoining the $x,y$ coordinates of all points of order $\ell$ of $E$ to $K$. Then $K(E[\ell])$ is a Galois extension of $K$ containing $K(\zeta_{\ell})$, and it is cyclic over $K(\zeta_{\ell})$ of degree dividing $\ell$. From this point on, we shall abbreviate $E(\overline{K})[\ell]$ with $E[\ell]$, and similarly for $E^d[\ell]$.

\begin{lemma}\label{4.2}
The Galois group $K(E[\ell])/K$ is generated by two elements $\overline{\gamma},\overline{\varepsilon}$ with $\overline{\gamma}^{\ell - 1} = \id$, $\overline{\varepsilon}^{\ell} = \id$, $\overline{ \gamma} | K(\zeta_{\ell})$ generates $K(\zeta_{\ell})/K$, and $\overline{\gamma}\overline{\varepsilon}\overline{\gamma}\inv = \overline{\varepsilon}^{\chi_{\ell}(\overline{\gamma})\inv}$.
\end{lemma}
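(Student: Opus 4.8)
The plan is to analyze the mod-$\ell$ Galois representation $\bar\rho\colon G_K\to\GL_2(\mF_\ell)$ attached to $E[\ell]$ and read off the structure of its image, which is exactly $\Gal(K(E[\ell])/K)$. Since $E$ has a $K$-rational point $P$ of order $\ell$, the line $\langle P\rangle$ is $G_K$-stable, so after choosing a basis $\{P,Q\}$ of $E[\ell]$ the image lies in the Borel subgroup $\begin{psmallmatrix}1 & *\\ 0 & *\end{psmallmatrix}\subset\GL_2(\mF_\ell)$, the top-left entry being $1$ because Galois fixes $P$. The determinant is the cyclotomic character $\chi_\ell$, which accounts for the bottom-right entry; it is surjective onto $\mF_\ell^\times$ precisely because $\zeta_\ell\notin K$, and its kernel cuts out $K(\zeta_\ell)$ with $\Gal(K(\zeta_\ell)/K)$ cyclic of order $\ell-1$. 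I would then let $\overline{\gamma}$ be a lift to $\Gal(K(E[\ell])/K)$ of a generator of $\Gal(K(\zeta_\ell)/K)$, chosen (using that $\ell-1$ is prime to $\ell$, so the Sylow-$p$ and prime-to-$p$ parts split) so that $\overline{\gamma}$ has order exactly $\ell-1$; and I would let $\overline{\varepsilon}$ generate $\Gal(K(E[\ell])/K(\zeta_\ell))$, which by the sentence preceding the lemma is cyclic of order dividing $\ell$, hence $\overline{\varepsilon}^{\ell}=\id$ (it is harmless if $\overline{\varepsilon}=\id$, i.e.\ the order is $1$ rather than $\ell$).

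Next I would verify the commutation relation. Under the basis $\{P,Q\}$, the element $\overline{\varepsilon}$ fixes $\zeta_\ell$ and fixes $P$, so it acts as $\begin{psmallmatrix}1 & t\\ 0 & 1\end{psmallmatrix}$ for some $t\in\mF_\ell$ (and $\overline{\varepsilon}\neq\id\iff t\neq 0$), while $\overline{\gamma}$ acts as $\begin{psmallmatrix}1 & s\\ 0 & \chi_\ell(\overline{\gamma})\end{psmallmatrix}$ for some $s$. A direct matrix computation in the Borel gives
\[
\overline{\gamma}\,\begin{psmallmatrix}1 & t\\ 0 & 1\end{psmallmatrix}\,\overline{\gamma}^{-1}=\begin{psmallmatrix}1 & \chi_\ell(\overline{\gamma})^{-1} t\\ 0 & 1\end{psmallmatrix},
\]
which says exactly $\overline{\gamma}\,\overline{\varepsilon}\,\overline{\gamma}^{-1}=\overline{\varepsilon}^{\chi_\ell(\overline{\gamma})^{-1}}$ in $\Gal(K(E[\ell])/K)$. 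Finally, since $\overline{\gamma}$ and $\overline{\varepsilon}$ together surject onto $\Gal(K(\zeta_\ell)/K)$ and $\Gal(K(E[\ell])/K(\zeta_\ell))$ respectively, they generate $\Gal(K(E[\ell])/K)$, completing the argument.

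The main obstacle is the choice of $\overline{\gamma}$ with order exactly $\ell-1$ rather than merely a lift of order divisible by $\ell-1$: a priori $\Gal(K(E[\ell])/K)$ need not split as a semidirect product with the chosen complement, but because $|\Gal(K(E[\ell])/K(\zeta_\ell))|$ divides $\ell$ and $\gcd(\ell,\ell-1)=1$, any lift of a generator of $\Gal(K(\zeta_\ell)/K)$ has order $\ell-1$ or $\ell(\ell-1)$, and replacing it by its $\ell$-th power (if needed) yields one of order exactly $\ell-1$ still mapping to a generator; one should check this replacement does not disturb the commutation relation, which it does not since raising $\overline{\gamma}$ to a power prime to $\ell-1$ and to $\ell$ only rescales the conjugation exponent by the same power, still a generator of the cyclic action. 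A secondary point worth stating carefully is that $\chi_\ell(\overline{\gamma})$ is well-defined modulo the order of $\overline{\varepsilon}$ (so the exponent $\chi_\ell(\overline{\gamma})^{-1}$ makes sense), which is immediate since $\chi_\ell(\overline{\gamma})\in(\mZ/\ell)^\times$ and $\overline{\varepsilon}$ has order dividing $\ell$.
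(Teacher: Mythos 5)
Your proof is correct and follows essentially the same route as the paper: choose a basis $\{P,Q\}$ so the mod-$\ell$ representation lands in $\begin{psmallmatrix}1&*\\0&*\end{psmallmatrix}$, take $\overline{\gamma}$ a lift of a generator of $\Gal(K(\zeta_\ell)/K)$ and $\overline{\varepsilon}$ a generator of $\Gal(K(E[\ell])/K(\zeta_\ell))$, then verify the relation by a Borel-subgroup matrix computation. The only cosmetic difference is that the paper normalizes $\overline{\gamma}$ to the diagonal matrix $\begin{psmallmatrix}1&0\\0&w\end{psmallmatrix}$, whereas you allow an off-diagonal entry $s$ and check the conjugation formula is unaffected — which also lets you sidestep (but note for the record: any element of the Borel with primitive bottom-right entry automatically has order $\ell-1$, so the $\ell$-th-power correction you worry about is never actually needed).
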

\begin{proof}
Choose a base of the form $\brk{P,Q}$ of $E[\ell]$ such that for $\sigma \in \Gal(K(E[\ell])/K)$ the action of $\sigma$ on $E[\ell]$ induces the matrix 
$$\rho_{\sigma} = \begin{psmallmatrix} 1 & b \\ 0 & a \end{psmallmatrix} \in \GL_2(\mF_{\ell}),$$
with $a = \det (\rho_{\sigma}) \equiv \chi_{\ell}(\sigma) \mod \ell$. Now we choose $\overline{\gamma}$ such that 
$$\rho_{\overline{\gamma}} = \begin{psmallmatrix} 1 & 0 \\ 0 & w \end{psmallmatrix} \in \GL_2(\mF_{\ell}).$$
with $w$ a generator of $\uZZ{\ell}$. Also, we pick $\overline{\varepsilon} = \id$ if $K(E[\ell]) = K(\zeta_{\ell})$. If $K(E[\ell]) \neq K(\zeta_{\ell})$, we choose $\overline{\varepsilon}$ such that 
$$\rho_{\overline{\varepsilon}} = \begin{psmallmatrix} 1 & 1 \\ 0 & 1 \end{psmallmatrix} \in \GL_2(\mF_{\ell}).$$
Then $\overline{\gamma}$ and $\overline{\varepsilon}$ generate $\Gal(K(E[\ell])/K)$ and since 
$$\begin{psmallmatrix} 1 & 0 \\ 0 & w \end{psmallmatrix}\begin{psmallmatrix} 1 & 1 \\ 0 & 1 \end{psmallmatrix}\begin{psmallmatrix} 1 & 0 \\ 0 & w\inv \end{psmallmatrix} = \begin{psmallmatrix} 1 & w\inv \\ 0 & 1 \end{psmallmatrix} = \begin{psmallmatrix} 1 & 1 \\ 0 & 1 \end{psmallmatrix}^{w\inv}$$
we have the relation $\overline{\gamma}\overline{\varepsilon}\overline{\gamma}\inv = \overline{\varepsilon}^{\chi_{\ell}(\overline{\gamma})\inv}$.
\end{proof}

\begin{remark}
The choice of $\overline{\gamma}$ and $\overline{\varepsilon}$ is closely related to the choice of base $\brk{P,Q}$. In particular, we have $\overline{\varepsilon}(Q) = P + Q$ if $\overline{\varepsilon} \neq \id$ and $\overline{\gamma}(Q) = \chi_{\ell}(\overline{\gamma}) Q$. 
\end{remark}

Let $d \in \cO_K\unit/(\cO_K\unit)^2$ be negative and relatively prime to $\fl\cdot  N(E)$. We define $L_d $ to be the quadratic extension of $K(E[\ell])$ given by the compositum $K(\sqrt{d})\cdot K(E[\ell])$. The Galois group $\Gal(L_d/K)$ is generated by three elements $\delta , \gamma , \varepsilon$ with $\delta$ commuting with $\varepsilon$ and $\gamma$ and
\begin{align*}
\delta^2 &= \id, & &\delta(\sqrt{d}) = -\sqrt{d}, \\
 \gamma^{\ell- 1} &= \id, & & \gamma | K(E[\ell]) = \overline{\gamma}, \\
\varepsilon^{\ell} &= \id, & & \varepsilon | K(E[\ell]) = \overline{\varepsilon}, \\
\gamma^i\varepsilon^j | K(\sqrt{d})& = \id,  & & \gamma\varepsilon\gamma\inv = \varepsilon^{\chi_{\ell}(\gamma)\inv}.
\end{align*}
In particular, we have that $\delta$ operates as $-\id$ on $E^d[\ell]$, the points of order $\ell$ of $E^d$. The fixed field of $\varepsilon$ is $K(\sqrt{d},\zeta_{\ell})$ and the fixed field of $\langle \varepsilon , \delta \gamma^{(\ell - 1)/2} \rangle$ is $K'$ as defined in Theorem \ref{C}. Thus, we have the following field diagram:
$$
\xymatrixrowsep{.2in}
\xymatrixcolsep{.2in}
\xymatrix{
{} & L_d & {} \\
{} & {} & K(E[\ell])\ar@{-}[lu] \\
K'\ar@{-}[ruu] & K(\sqrt{d}) \ar@{-}[uu]   & K(\zeta_{\ell}) \ar@{-}[u]_{\ell} \\
{} & K\ar@{-}[ru]_{\ell-1} \ar@{-}[lu]\ar@{-}[u]^{2}
}
$$

We now describe the elements in $\cH^1(G_K,E^d[\ell])$. We have the exact inflation-restriction sequence
$$0 \lrra \cH^1(\Gal(L_d/K),E^d[\ell]) \overset{\text{inf.}}{\lrra } \cH^1(G_K,E^d[\ell]) \overset{\text{res.}}{\lrra } \cH^1(\Gal(\overline{K}/L_d),E^d[\ell]) ,$$
where $\cH^1(\Gal(\overline{K}/L_d),E^d[\ell]) = \Hom_{\Gal(L_d/K)}(\Gal(\overline{K}/L_d),E^d[\ell]).$

\begin{lemma}
The group $\cH^1(G_K,E^d[\ell])$ injects into $\Hom_{\Gal(L_d/K)}(\Gal(\overline{K}/L_d),E^d[\ell])$.
\end{lemma}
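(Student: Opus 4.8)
The plan is to read the claim off the inflation--restriction sequence displayed immediately above the statement. Its last term is precisely $\Hom_{\Gal(L_d/K)}(\Gal(\overline{K}/L_d),E^d[\ell])$: one uses here that $\Gal(\overline{K}/L_d)$ acts trivially on $E^d[\ell]$, because over $K(\sqrt{d})$ the twist $E^d$ is isomorphic to $E$ and $E[\ell]$ is rational over $K(E[\ell])$, so all of $E^d[\ell]$ is defined over $L_d = K(\sqrt{d})\cdot K(E[\ell])$. Consequently, to prove that restriction is the desired injection it suffices to show that the inflation term $\cH^1(\Gal(L_d/K),E^d[\ell])$ vanishes.

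To prove $\cH^1(\Gal(L_d/K),E^d[\ell]) = 0$, I would exploit the subgroup $N := \langle\delta\rangle$ of $G := \Gal(L_d/K)$. By the relations recorded before the statement $\delta$ commutes with $\gamma$ and $\varepsilon$, so $N$ is central, hence normal, of order $2$. Inflation--restriction for $1 \to N \to G \to G/N \to 1$ gives exactness of $0 \to \cH^1(G/N,(E^d[\ell])^N) \to \cH^1(G,E^d[\ell]) \to \cH^1(N,E^d[\ell])$, so the vanishing reduces to two elementary checks. First, $(E^d[\ell])^N = 0$, since $\delta$ acts as $-\id$ on $E^d[\ell]$ and $\ell$ is odd; hence the leftmost term is $0$. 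Second, $\cH^1(N,E^d[\ell]) = 0$: the order $2$ of $N$ is coprime to the order $\ell^2$ of the module, and concretely $\cH^1(\mZ/2\mZ,E^d[\ell]) = \ker(1+\delta)/\im(\delta-1) = E^d[\ell]/2E^d[\ell] = 0$ because $2$ is invertible modulo $\ell$. Together these force $\cH^1(G,E^d[\ell]) = 0$, and feeding this back into the sequence over $K$ yields the injection.

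I do not expect a genuine obstacle: once the presentation of $\Gal(L_d/K)$ from the preceding paragraph is in hand, the argument is formal. The only place requiring mild care is keeping straight the Galois action on $E^d[\ell]$ versus $E[\ell]$ — they differ by the quadratic twisting character $\varepsilon_d$ — and in particular confirming that $\delta$ really acts by $-\id$ on $E^d[\ell]$; but this is one of the relations already listed, so the bookkeeping is done for us.
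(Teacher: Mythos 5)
Your argument is correct, and it takes a genuinely different route from the paper. The paper splits into two cases according to whether $\varepsilon$ is trivial: if $\varepsilon = \id$ then $[L_d:K]$ is prime to $\ell$ and the cohomology vanishes for order reasons, and if $\varepsilon$ has order $\ell$ the paper applies inflation--restriction with respect to the $\ell$-Sylow subgroup $\langle \varepsilon \rangle$, writes down the explicit generating cocycle $\psi(\varepsilon) = Q_d$ of $\cH^1(\langle\varepsilon\rangle, E^d[\ell])$, and checks directly that it fails to be $\delta$-invariant. You instead apply inflation--restriction with respect to the central involution $\langle\delta\rangle$: since $\delta$ acts as $-\id$ on a module of odd order, both $(E^d[\ell])^{\langle\delta\rangle}$ and $\cH^1(\langle\delta\rangle, E^d[\ell])$ vanish for purely arithmetic reasons, so the middle term vanishes with no case distinction and no explicit cocycle. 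Your route is cleaner and more conceptual --- it is essentially the standard observation that a central element acting as $-1$ forces $2$ to kill all cohomology, which then must vanish because the module is $\ell$-torsion --- and it buys you uniformity across the two cases the paper treats separately. The paper's computation, on the other hand, exhibits the obstructing cocycle explicitly, which ties in with the hands-on cocycle manipulations used throughout Section~\ref{4.1}, but for the purpose of this lemma your argument gives more for less.
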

\begin{proof}
We need to show that $\cH^1(\Gal(L_d/K),E^d[\ell]) = 0$. If $\varepsilon = \id$, the degree of $L_d/K$ is prime to $\ell$, and the assertion follows. Now let $\varepsilon$ be of order $\ell$. Using the inflation-restriction sequence, one has that $$\cH^1(\Gal(L_d/K),E^d[\ell]) = \cH^1(\langle \varepsilon \rangle,E^d[\ell])^{\langle \delta, \gamma \rangle}.$$
Let $P_d,Q_d$ be the points of order $\ell$ of $E^d[\ell]$ corresponding to $P,Q \in E[\ell]$ . Then $P_d  = \varepsilon Q_d - Q_d,$
and hence $\cH^1(\langle \varepsilon \rangle , E^d[\ell])$ is generated by the class of cocycle $\psi$ which sends $\varepsilon$ to $Q_d$. Since $\delta\varepsilon\delta = \varepsilon$ and $\delta Q_d = -Q_d$, we have that $\psi \notin \cH^1(\langle \varepsilon \rangle,E^d[\ell])^{\langle \delta \rangle}$, and thus $\cH^1(\Gal(L_d/K),E^d[\ell]) = \cH^1(\langle \varepsilon \rangle,E^d[\ell])^{\langle \delta, \gamma \rangle} = 0$.
\end{proof}

Take an element $\widetilde{\Phi} \in \cH^1(G_K,E^d[\ell])$ with 
$$\res \widetilde{\Phi} = \phi \in \Hom_{\Gal(L_d/K)}(\Gal(\overline{K}/L_d),E^d[\ell])$$
and denote by $M$ the fixed field of the kernel of $\phi$. $M/K$ is normal and $\Gal(M/L_{d})$ is possibly generated by two elements $\alpha_1,\alpha_2$ with $\alpha_i^{\ell} = \id$, which we may choose in such a way that 
$$\phi(\alpha_1) = \mu_1P \quad \aad \quad \phi(\alpha_2) = \mu_2Q .$$
We may also assume that $\mu_i = 1$ if $\alpha_i \neq \id$.

We extend $\delta,\gamma,\varepsilon \in \Gal(L_{d}/K)$  to elements $\widetilde{\delta}, \widetilde{\gamma}, \widetilde{\varepsilon} \in \Gal(M/K)$ and compute that the actions of these elements on $\alpha_i$. We assume that $\widetilde{\delta}^2 = \widetilde{\gamma}^{\ell-1} = \id$. Since 
$$\phi(\beta\alpha_i\beta\inv) = \beta \phi(\alpha_i) \quad \forall \, \beta \in \Gal(M/K)$$
via the fact that $\phi$ is a group homomorphism and the cocycle condition we get:
\begin{align*}
\widetilde{\delta}\alpha_i\widetilde{\delta}&= \alpha_i\inv & &(\because \, \widetilde{\delta}| E^d[\ell] = -\id) \\
\widetilde{\gamma}\alpha_1\widetilde{\gamma}\inv &= \alpha_1 && (\because \,\widetilde{\gamma}P = P), \\
\widetilde{\gamma}\alpha_2\widetilde{\gamma}\inv &= \alpha_2^{\chi_{\ell}(\widetilde{\gamma})} && (\because \,\widetilde{\gamma}Q = \chi_{\ell}(\widetilde{\gamma})Q), \\
\widetilde{\varepsilon}\alpha_1\widetilde{\varepsilon}\inv &= \alpha_1 &&(\because\, \widetilde{\varepsilon}P = P), \\
\widetilde{\varepsilon}\alpha_2\widetilde{\varepsilon}\inv &= \alpha_1\alpha_2 & &\mif \, \varepsilon \neq \id \aad \alpha_2\neq \id (\because \text{ then } \varepsilon\phi(\alpha) = \\
& & & \varepsilon P = P+Q = \phi(\alpha_1\alpha_2); \text{ necessarily } \alpha_1 \neq \id \\
& & & \text{in this case}).
\end{align*}
In particular, it follows that $\langle \alpha_1 \rangle$ is a normal subgroup of $\Gal(M/K)$ and that $\langle \alpha_2 \rangle$ is normal if either $\alpha_2 = \id$ or $\widetilde{\varepsilon} = \id$.

Now we distinguish between two cases:\nl

\hspace*{-1em}\textbf{Case 1.} $\widetilde{\varepsilon} = \id$. In this case $\langle \alpha_1 \rangle$ and $\langle \alpha_2 \rangle$ are both normal in $\Gal(M/K)$ and hence
$$M_i := M^{\langle \alpha_i \rangle}$$
are normal extensions of $K$. The Galois group of $M_2/K(\sqrt{d})$ is abelian and generated by the restriction of $\langle \widetilde{\gamma},\alpha_1 \rangle$ to $M_2$. Hence
$$\overline{M}_2 := M^{\langle \alpha_2,\widetilde{\gamma} \rangle}$$
is Galois over $K$ containing $K(\sqrt{d})$ and if $\alpha_1 \neq \id$, then $\Gal(\overline{M}_2/K)$ is non-abelian of order $2\ell$. Since
$$\widetilde{\delta}\widetilde{\gamma}^{(\ell-1)/2}\alpha_2(\widetilde{\delta} \widetilde{\gamma}^{(\ell-1)/2})\inv = \alpha_2,$$
it follows that $M_1$ is abelian over $K'$ and hence 
$$\overline{M}_1 := M^{\langle \alpha_1,\widetilde{\delta}\widetilde{\gamma}^{(\ell-1)/2}\rangle}$$
is normal over $K$. Its Galois group is generated by 
$$\overline{\alpha}_2 = \alpha_2 | \overline{M}_1 \quad \aad\quad \overline{\gamma} = \widetilde{\gamma} | \overline{M}_1,$$
and its order is equal to $|\alpha_2|\cdot (\ell-1)$. Also one has the relation $\overline{\gamma}\overline{\alpha_2}\overline{\gamma}\inv = \overline{\alpha}_2^{\chi_{\ell}(\overline{\gamma})}.$
To summarize, we have that
\begin{align*}
\overline{M}_1(\phi) &:= M^{\langle \alpha_1,\widetilde{\delta}\widetilde{\gamma}^{(\ell-1)/2}\rangle}, \\
\overline{M}_2(\phi) &:= M^{\langle \alpha_2,\widetilde{\gamma} \rangle}.
\end{align*}

\hspace*{-1em}\textbf{Case 2.} $|\widetilde{\varepsilon} | = \ell$. In this case, we may assume that $\alpha_1 \neq \id$ for $\alpha_1 = \id$ implies that $\alpha_2 = \id$, as well.\vspace*{.5em}

\hspace*{-1em}\textbf{Subcase (i).} $\alpha_2 = \id$. We assert that $\Gal(M/K(\zeta_{\ell},\sqrt{d}))$ is not cyclic. Otherwise $\widetilde{\varepsilon}$ would be an element of order $\ell^2$ with $\widetilde{\varepsilon}^{\,\ell} = \alpha_1$ (without lose of generality). So $\widetilde{\delta}\widetilde{\varepsilon}^{\,\ell}\widetilde{\delta} = \widetilde{\varepsilon}^{-\ell}$ and hence
$$\widetilde{\delta}\widetilde{\varepsilon}\widetilde{\delta} = \widetilde{\varepsilon}^{\,k} \quad \text{with }k\equiv -1\mod {\ell}.$$
But since $\delta\varepsilon\delta = \varepsilon$, we would get 
$\delta\widetilde{\varepsilon}\delta = \widetilde{\varepsilon}\cdot (\widetilde{\varepsilon}^{\ell})^n = \widetilde{\varepsilon}^{\,1+\ell^n}$
which gives a contradiction. Hence, we can choose $\widetilde{\varepsilon}$ so that 
$$\widetilde{\varepsilon}^{\,\ell} = \widetilde{\alpha}_1^{\,\ell} = \id \quad \aad \quad \widetilde{\delta}\widetilde{\varepsilon}\widetilde{\delta} = \widetilde{\varepsilon},$$
which determines $\widetilde{\varepsilon}$ uniquely. Thus, $\overline{M}_2 := M^{\langle \widetilde{\varepsilon} , \widetilde{\gamma}\rangle}$ is normal over $K$ and contains $K(\sqrt{d})$ and its Galois group is dihedral of order $2\ell$ and generated by $\langle \alpha_1,\widetilde{\delta} \rangle$. To summarize, we say that
\begin{align*}
\overline{M}_1(\phi) &:= M^{\langle \alpha_1,\widetilde{\delta}\widetilde{\gamma}^{(\ell-1)/2}\rangle}, \\
\overline{M}_2(\phi) &:= M^{\langle \widetilde{\varepsilon} , \widetilde{\gamma}\rangle}.
\end{align*}

\vspace*{.5em}
\hspace*{-1em}\textbf{Subcase (ii).} $\alpha_2 \neq \id$. We have that $M_1 := M^{\langle \alpha_1\rangle}$ is normal over $K$ and of degree $\ell$ over $L_d$. Since $\widetilde{\delta}\alpha_2\widetilde{\delta} = \alpha_2\inv$, we conclude as above that $\varepsilon$ has an extension $\widetilde{\varepsilon}$ to $M_1$ of order $\ell$ with $\widetilde{\delta}\widetilde{\varepsilon}\widetilde{\delta} = \widetilde{\varepsilon}$. Since $\widetilde{\delta}\widetilde{\gamma}^{(\ell-1)/2}$ acts trivially on $\alpha_2$ and $\widetilde{\varepsilon}$ acts trivially on $\alpha_2 | M_1$, we have that $\langle \widetilde{\delta}\widetilde{\gamma}^{(\ell-1)/2},\widetilde{\varepsilon} \rangle$ is a normal subgroup of $\Gal(M_1/K)$. Hence 
$$\overline{M}_1 := M_1^{\langle \widetilde{\delta}\widetilde{\gamma}^{(\ell-1)/2},\widetilde{\varepsilon} \rangle}$$
is normal over $K$ containing $K'$, and its Galois group over $K'$ is generated by $\overline{\alpha}_2 =\alpha_2 | \overline{M}_1 $, which is of order $\ell$ and satisfies the relation
$$\overline{\gamma}\overline{\alpha}_2\overline{\gamma}\inv = \overline{\alpha}_2^{\chi_{\ell}(\overline{\gamma})} \quad \text{with }\overline{\gamma} = \widetilde{\gamma} | K'.$$
In order to simplify notation, we define $\overline{M}_2(\phi) := K(\sqrt{d})$ if either $\varepsilon \neq \id $ or $\alpha_2 \neq \id$. To summarize, we say that
\begin{align*}
\overline{M}_1(\phi) &:= M_1^{\langle \widetilde{\delta}\widetilde{\gamma}^{(\ell-1)/2},\widetilde{\varepsilon} \rangle}, \\
\overline{M}_2(\phi) &:= K(\sqrt{d}).
\end{align*}

Hence for a given 
$$\widetilde{\Phi} \in \cH^1(G_K,E^d[\ell])$$
we have a field $M = M(\phi)$ which determines $\langle \phi \rangle$ completely where $\phi = \res(\widetilde{\Phi})$. We want to study the information we attain from the pair $(\overline{M}_1(\phi),\overline{M}_2(\phi))$. If $\varepsilon = \id$ or $\alpha_2 = \id$, then we get back $M(\phi)=M$ from $(\overline{M}_1(\phi),\overline{M}_2(\phi))$. In these cases, we shall say that $\phi$ is of \cdef{first type}. What happens if $\varepsilon \neq \id$ and $\alpha_2 \neq \id$? Assume that 
$$\phi \neq \psi \in \cH^1(G_K,E^d[\ell])$$
have fields $M(\phi)$ and $M(\psi)$ with Galois groups $\langle \alpha_1,\alpha_2 \rangle$ and $\langle \beta_1 , \beta_2\rangle$ as above such that 
$$M(\phi)^{\alpha_1} = M(\psi)^{\beta_1}. $$
Let $N$ be the composite of $M(\phi)$ and $M(\psi)$. Then the Galois group $\Gal(N/L_d)$ is generated by three elements $\langle \sigma_1,\sigma_2,\sigma_3 \rangle$, which we can choose in such a way that
$$\sigma_1 | M(\phi) = \alpha_1 ,\gap  \sigma_1 |M(\psi) = \beta_1^{\lambda}$$
$$\sigma_2 | M(\phi) = \alpha_2 ,\gap  \sigma_2 |M(\psi) = \beta_2^{\lambda}$$
where $\lambda \in \brk{1,\dots , \ell -1}$. $N$ is a splitting field of $\phi$ and $\psi$, and
$$(\phi - \lambda^{\ell - 1}\psi)(\sigma_1) = (\phi - \lambda^{\ell - 1}\psi)(\sigma_2) = 0.$$
Hence the fixed field of the kernel of $\phi - \lambda\psi$ is a cyclic extension of $L_d$ which is normal over $K$, and $\phi - \lambda\inv\psi$ is of first type. 

Thus, $\overline{M}_1(\phi)$ determines $\langle \phi \rangle$ up to elements of first type, and in order to determine all elements in $\cH^1(G_K,E^d[\ell])$, it is enough to determine all dihedral extensions of $K$ of degree $2\ell$ containing $K(\sqrt{d})$ and all extensions $M_1$ of degree $\ell$ over $K'$ which are normal over $K$ such that conjugation by $\overline{\gamma}$ on $\Gal(\overline{M}_1,K')$ is equal to $\chi_{\ell}(\overline{\gamma})$. 

Therefore to prove the double divisibility, one has to show that for $\phi \in \Sel_{\ell}(E^d,K)$, the field $\overline{M}_2(\phi)$ is unramified over $K(\sqrt{d})$ outside $\widetilde{S}_E $, and $\overline{M}_1(\phi)$ is unramifed over $K'$ outside $S_E$ and little ramified at divisors of $\fl$.

\subsection{Splitting fields of elements in $\Sel_{\ell}(E^d,K)$}\label{S5}
We shall continue to use the assumptions and the notations of the Theorem \ref{C} and Section \ref{4.1}.
\begin{lemma}\label{3.1}
Let $\phi$ be an element in $\Sel_{\ell}(E^d,K)$. Then $\overline{M}_1(\phi) =: \overline{M}_1$ is unramified at $\fq$ over $K'$ and $\overline{M}_2(\phi) =: \overline{M}_2$ is unramified at $\fq$ over $K(\sqrt{d})$.
\end{lemma}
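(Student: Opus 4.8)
The plan is to treat $\overline{M}_1:=\overline{M}_1(\phi)$ and $\overline{M}_2:=\overline{M}_2(\phi)$ separately, using that in each of the cases of Section~\ref{4.1} both are either the base field ($K'$, resp.\ $K(\sqrt{d})$) or a cyclic Galois extension of it of degree $\ell$. Since $\fq\mid 2\neq\ell$, any ramification of such an extension at a prime over $\fq$ is tame and, the degree being a prime, totally ramified, so it would force a primitive $\ell$-th root of unity into the relevant $\fq$-adic completion of the base; equivalently $\ell\mid q_0-1$, where $q_0$ is the order of the residue field of that prime. This reduces the lemma to controlling residue fields, which is easy for $\overline{M}_2$: since $N_{K/\mQ}(\fq)=2$ and $[K(\sqrt{d}):K]=2$, a prime of $K(\sqrt{d})$ above $\fq$ has residue field of order at most $4$, so $\ell\mid q_0-1$ would give $\ell\mid 3$, contradicting $\ell\notin\{2,3\}$; hence $\overline{M}_2/K(\sqrt{d})$ is unramified at $\fq$.

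For $\overline{M}_1/K'$ the residue field of $K'$ at $\fq$ can be as large as $\mF_{2^{\ell-1}}$, so the count above is useless, and I would split into two cases. If $\fq\nmid N(E)$, then $E$ and hence $E^d$ has potentially good reduction at $\fq$ and acquires good reduction over $F:=K_{\fq}(\sqrt{d})$ (which is $K_{\fq}$ itself when $\fq$ is unramified in $K(\sqrt{d})$). Since $\phi\in\Sel_{\ell}(E^d,K)$, the restriction to $F$ of the lift $\widetilde{\Phi}\in\cH^1(G_K,E^d[\ell])$ lies in the image of the local Kummer map for $E^d/F$, which consists of unramified classes because $E^d/F$ has good reduction and $\ell$ is a unit in the residue field; as $E^d[\ell]$ is then an unramified inertia module, $\widetilde{\Phi}$ vanishes on the inertia subgroup at $\fq$ over $F$. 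Therefore $M(\phi)$, which contains $K(\sqrt{d})$, $F$, the division field $F(E^d[\ell])$ (unramified over $F$), and $\overline{M}_1(\sqrt{d})$, is unramified over $K(\sqrt{d})$ at $\fq$; hence so is its subextension $\overline{M}_1(\sqrt{d})/K'(\sqrt{d})$, and since $[\overline{M}_1(\sqrt{d}):\overline{M}_1]\le 2$ is coprime to $\ell=[\overline{M}_1:K']$, we conclude that $\overline{M}_1/K'$ is unramified at $\fq$.

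If $\fq\mid N(E)$, I would argue by contradiction from the Galois structure of Section~\ref{4.1}: $\overline{M}_1/K$ is Galois with group $\Gal(\overline{M}_1/K')\rtimes\langle\overline{\gamma}\rangle$, where $\Gal(\overline{M}_1/K')\cong C_{\ell}$ and $\langle\overline{\gamma}\rangle\cong C_{\ell-1}$ acts on it through $\chi_{\ell}(\overline{\gamma})$, a generator of $\mF_{\ell}^{\times}$; in particular the action is faithful, i.e.\ this is a Frobenius group. If $\overline{M}_1/K'$ were ramified below a prime $\fP\mid\fq$ of $\overline{M}_1$, then (prime degree) $C_{\ell}$ would lie inside the inertia group $I(\fP/\fq)\subseteq C_{\ell}\rtimes C_{\ell-1}$; and in fact $I(\fP/\fq)=C_{\ell}$, because its wild part is a $2$-subgroup of a complement, is normal in the decomposition group, and conjugation of such a subgroup by $C_{\ell}$ moves it out of itself unless it is trivial (here the faithfulness of the $C_{\ell-1}$-action is what is used). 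Hence $\fq$ is tamely and totally ramified in $\overline{M}_1/K$ with ramification index $\ell$, forcing $\fq$ to be \emph{unramified} in $K'/K$. But $K'$ contains $K(\zeta_{\ell})^{+}$ with $[K':K(\zeta_{\ell})^{+}]=2$, and $K(\zeta_{\ell})/K$ — hence $K(\zeta_{\ell})^{+}/K$ and $K(\zeta_{\ell})/K(\zeta_{\ell})^{+}$ — is unramified at $\fq$ (being unramified away from $\fl$), so condition (1) of Theorem~\ref{C}, which makes $\fq$ ramify in $K(\sqrt{d})/K$, forces $K'/K(\zeta_{\ell})^{+}$ and therefore $K'/K$ to ramify at $\fq$: a contradiction. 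Thus $\overline{M}_1/K'$ is unramified at $\fq$ in this case too.

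The hard part is exactly this last case: the residue-field count gives nothing there, and one must combine the Frobenius-group shape of $\Gal(\overline{M}_1/K)$ with the ramification theory of tame and wild inertia and with condition (1); keeping straight the ramification bookkeeping in the tower $K\subseteq K(\zeta_{\ell})^{+}\subseteq K'\subseteq\overline{M}_1$ — which is where $N_{K/\mQ}(\fq)=2$ and the small-degree hypotheses on $K$ enter — is the delicate point.
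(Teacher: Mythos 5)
Your proof of the $\overline{M}_2$ statement follows the same idea as the paper (tame ramification in residue characteristic $2$ plus a residue--field count), with a small simplification: the paper first uses condition (1) to conclude that $\fq$ ramifies in $K(\sqrt{d})/K$, so that the residue field of $\fQ$ is $\mF_2$ exactly, and then invokes $|\mF_2^\times|=1$; you instead just bound the residue field by $\mF_4$ (since $N_{K/\mQ}(\fq)=2$ and $[K(\sqrt{d}):K]=2$) and use $\ell\ge 5 > 3$, which makes the argument independent of whether $\fq\mid\Delta_{K(\sqrt{d})/K}$.

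Your treatment of $\overline{M}_1$ takes a genuinely different route from the paper's, and it is the more interesting comparison. The paper proceeds prime by prime in $\ell$: for $\ell\in\{5,11,13,19,37\}$ it argues via the residue field of $\fQ_{K'}$ (using condition (1) to bound the residue degree $f$ by $(\ell-1)/2$, and then the listed non-divisibilities $\ell\nmid 2^{f}-1$); for $\ell\in\{7,17\}$ that count fails (since $7\mid 2^3-1$ and $17\mid 2^8-1$) and the paper instead produces a tamely ramified subfield $M_t$ and derives a contradiction from the would-be abelianness of $\Gal(\overline{M}_1/K(\zeta_\ell+\zeta_\ell^{-1}))$. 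Your argument replaces all of this with a single group-theoretic step: the wild inertia $P(\fP/\fq)$ is a $2$-group, normal in $D(\fP/\fq)$, and $[C_\ell,P]\subseteq C_\ell\cap P=1$; since $C_{\ell-1}$ acts faithfully on $C_\ell$, $C_G(C_\ell)=C_\ell$, so $P=1$ and $I(\fP/\fq)$ is cyclic; a cyclic subgroup containing $C_\ell$ lies in $C_G(C_\ell)=C_\ell$, so $I(\fP/\fq)=C_\ell$, forcing $K'/K$ unramified at $\fq$, which contradicts condition (1) via the inertia computation in $K(\sqrt{d},\zeta_\ell)/K$. This is a cleaner, uniform argument. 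Note, however, that your final paragraph's claim that $N_{K/\mQ}(\fq)=2$ and the small-degree hypotheses enter into the $\overline{M}_1$ bookkeeping does not seem to be reflected in your own argument: as written, your $\overline{M}_1$ step never uses $N_{K/\mQ}(\fq)=2$, only the Frobenius-group structure of $\Gal(\overline{M}_1/K)$ and condition (1). If that is right, your proof would also cover $\ell=73$, whereas the paper's remark after the lemma singles out $\ell=73$ as the obstruction to extending Theorem \ref{C} to degree $6$. That discrepancy is worth double-checking against the cases of Section \ref{4.1}, in particular Subcase 2(i), where the structure $\Gal(\overline{M}_1/K)\cong C_\ell\rtimes C_{\ell-1}$ is not obviously established by the paper's definition of $\overline{M}_1$.

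Finally, you add an explicit case $\fq\nmid N(E)$ handled via good reduction and the local Kummer image, whereas the paper's written proof opens with ``Since $\fq\mid\Delta_{K(\sqrt{d})/K}$'' and therefore only literally covers $\fq\mid N(E)$ (where condition (1) applies); your extra case is either filling a small gap in the paper or making explicit what the paper leaves implicit.
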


\begin{proof}
We first prove the latter statement. Since $\fq | \Delta_{K(\sqrt{d})/K}\,$, we have that $K(\sqrt{d})$ and $K'$ are ramified at $\fq$ over $K$. Hence the norm of $\fQ|\fq$ in $K(\sqrt{d})$ is equal to $\fq$, and by assumption the norm of $\fQ|2$ is equal to 2. Suppose that $K(\sqrt{d})$ had a cyclic extension of degree $\ell$ in which $\fQ$ is ramified. Then the completion $K(\sqrt{d})_{\fQ}$ admits a cyclic extension of degree $\ell$ ramified at $\fQ$. Since $\ell$ is odd and $\fQ$ has residue characteristic two, this extension is tamely ramified. By local class field theory, the tamely ramified cyclic extensions of a local field $K(\sqrt{d})_{\fQ}$ all have degree dividing $|\kappa\unit|$, where $\kappa$ is the residue field. Since $\kappa = \mF_2$, we have that there are no tamely ramified and ramified extensions of $K(\sqrt{d})_{\fQ}$. Thus, $K(\sqrt{d})$ has no cyclic extension of degree $\ell$ in which $\fQ$ ramifies, and hence $\overline{M}_2$ is unramified at $\fq$ over $K(\sqrt{d})$.

To prove the former statement, we shall utilize the proof of \cite[Lemma~3]{frey1987selmer} and look prime by prime. For $\ell = 5$, the same argument as above can be applied to $\fQ_{K'}|\fq$. For $\ell = 7$, there is only one extension $\fQ|\fq$ to $K'$ which is ramified of order $2$ and has norm $8$. Assume that $\fQ_{K'}$ is ramified in $\overline{M}_1/K'$ and let $\fQ_{\overline{M}_1}$ be the unique extension of $\fQ_{K'}$ to $\overline{M}_1$. Let $M_t$ be the subfield of $\overline{M}_1$ in which $\fQ_{\overline{M}_1}$ is tamely ramified. Then $M_t$ is a cyclic extension of degree $7$ over $K(\zeta_{7} + \zeta_{7}\inv)$, and $\overline{M}_1$ is the compositum of $M_t$ with $K'$ over $K(\zeta_{7} + \zeta_{7}\inv)$. Thus, $\Gal(\overline{M}_1/K(\zeta_{7} + \zeta_{7}\inv))$ is abelian. But this contradicts the fact that 
$$\overline{\gamma}^{3}\overline{\alpha}\overline{\gamma}^{3} = \overline{\alpha}^{\chi_{7}(\overline{\gamma}^{3})} = \overline{\alpha}\inv,$$
where $\langle \overline{\alpha}\rangle = \Gal(\overline{M}_1/K')$ and $\langle \overline{\gamma}\rangle = \Gal(K'/K)$. 

For $\ell = 11,13,19,37$, we can use the same proof as the first statement since 
\begin{align*}
11 &\nmid (2^5 - 1)   & 13 &\nmid  (2^2 - 1) & 37 &\nmid (2^{3} - 1) & 37 &\nmid (2^{18} - 1) \\
13 &\nmid (2^6 - 1)   & 19 &\nmid (2^9 - 1)   & 37 &\nmid (2^{9} - 1) &  37 &\nmid (2^{2} - 1). \\
13 &\nmid  (2^3 - 1)  & 19 &\nmid (2^3 - 1)  & 37 &\nmid (2^{6} - 1) &  & 
\end{align*}
For $\ell = 17$, there there is only one extension $\fQ|\fq$ to $K'$ which is ramified of order $2$ and has norm $2^8$ (note that $17|(2^8 - 1)$). If we assume that $\fQ_{K'}$ is ramified in $\overline{M}_1/K'$, then we can use the above argument to construct the same contradiction.
\end{proof}

\begin{remark}
Since $73 | (2^{36} - 1),$ $73 | (2^9 - 1)$, and $73 | (2^{18} - 1)$, we may not assume that there is a unique cyclic extension of $K'$ with degree $73$ in which $\fQ$ is ramified, and hence the above argument does work for $\ell = 73$. This precludes us from extending Theorem \ref{C} to number fields $K$ of degree $6$.
\end{remark}

Therefore, we can assume that $\fp \nmid \fq \cdot \fl$, but $\fp | N(E)$.
\begin{lemma}\label{3}
Let $\phi$ be an element in $\Sel_{\ell}(E^d,K)$. Then $ \overline{M}_1/K'$ is unramified outside of $S_E \cup \brk{\fl}$ and $ \overline{M}_2/K(\sqrt{d})$ is unramified outside $\widetilde{S}_E \cup \brk{\fl} $.
\end{lemma}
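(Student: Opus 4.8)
The plan is to turn the statement into a prime-by-prime local analysis and then follow the pattern of the proof of Lemma~\ref{4.3}. By the observation immediately preceding the statement it suffices to treat primes $\fp \mid N(E)$ with $\fp \nmid \fq\cdot\fl$: the divisors of $2$ are dealt with in Lemma~\ref{3.1}, and divisors of $\ell$ are permitted in the conclusion. So fix such a $\fp$ and a prime $\fP$ of $L_d$ above it. Since $\fp\nmid\ell$, every $\ell$-power subextension in sight is tamely ramified at $\fp$, and $K(\sqrt d)/K$, $K'/K$, $K(E^d[\ell])/K$ can only contribute prime-to-$\ell$ ramification there. Because $\phi\in\Sel_\ell(E^d,K)$, its image in $\cH^1(\Gal(\overline{K_\fp}/K_\fp),E^d(\overline{K_\fp}))$ vanishes, so by the Kummer sequence for $E^d/K_\fp$ the localized cocycle is a coboundary $\sigma\mapsto\sigma R-R$ for some $R\in E^d(\overline{K_\fp})$ with $\ell R\in E^d(K_\fp)$; hence the local splitting field of $\phi$, and with it the completions of $\overline M_1$ and $\overline M_2$ at $\fp$, lie inside $L_{d,\fP}(R)$.

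First I would dispose of the primes with $\ord_\fp(j_E)\ge 0$. Here $E^d$ has potentially good reduction, and since $\ell\ge 5$ it acquires good reduction over an extension $N/K_\fp$ of degree prime to $\ell$. Over $N$ the multiplication-by-$\ell$ map on $E^d$ is finite \'etale at the prime above $\fp$, and $E^d[\ell]$ is unramified over $N$ by N\'eron--Ogg--Shafarevich, so $N(R)/N$ is unramified; as $[N:K_\fp]$ is prime to $\ell$ while $\overline M_1/K'$ and $\overline M_2/K(\sqrt d)$ are cyclic of degree dividing $\ell$, tameness forces both unramified at $\fp$. Such primes contribute nothing.

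The heart of the matter is the case $\ord_\fp(j_E)<0$, where after a quadratic extension $E^d$ becomes a Tate curve. If $\fp\in S_E$ there is nothing to prove, since $S_E\subseteq\widetilde S_E$ is already allowed; so assume $\fp\notin S_E$. Then condition (3) of Theorem~\ref{C} pins down $\pwr{\frac{K(\sqrt d)/K}{\fp}}$, which together with the reduction type of $E$ determines which quadratic twist $E^d$ presents over each of $K_\fp$, $K(\sqrt d)_\fP$ and $K'_\fP$. If $E^d/K_\fp$ is not itself a Tate curve, the computation in the proof of Lemma~\ref{4.3} shows that $P$ corresponds under the Tate parametrization to an $\ell$-th root of unity over $K_\fp(\zeta_\ell)$, so $L_{d,\fP}(R)$ sits inside the unramified extension $K_\fp(\zeta_\ell)$ and $\overline M_1,\overline M_2$ are unramified at $\fp$. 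If $E^d$ is a Tate curve with parameter $q$ over the relevant completion, then writing $\ell R=\tau(v)$ gives $R=\tau(\zeta_\ell^a(vq^m)^{1/\ell})$, and the ramified part of $L_{d,\fP}(R)$ over the unramified extension $K_\fp(\zeta_\ell)$ is governed by $\ord_\fp(vq^m)$ modulo $\ell$; since $\ord_\fp(q)=-\ord_\fp(j_E)=\ord_\fp(\Delta_E)$ and powers of $q$ can be absorbed into $R$, this reduces to $\ord_\fp(\Delta_E)\bmod \ell$ — exactly the ramification behaviour of $K_\fp(E^d[\ell])$. So when $\ell\mid\ord_\fp(\Delta_E)$ the extension is unramified at $\fp$; and when $\ell\nmid\ord_\fp(\Delta_E)$, the hypothesis $\chi_\cH(\fp)\ne 0$ (Frey's analogue being $p\equiv -1\bmod\ell$) enters, via the dictionary of Remark~\ref{2.4} and exactly as in the proof of Lemma~\ref{4.4}, to control which primes can ramify in a cyclic degree-$\ell$ extension of $K(\sqrt d)$ (resp. $K'$): at primes with $\chi_\cH(\fp)=0$ no such ramification can occur, while otherwise $\fp\in\widetilde S_E$, and since $\ord_\fp(j_E)<0$ in fact $\fp\in S_E$ — the permitted locus for both $\overline M_1$ and $\overline M_2$. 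Assembling the cases yields the lemma.

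The step I expect to be the main obstacle is the bookkeeping in this last case: reading off from Theorem~\ref{C}'s Artin-symbol conditions (which are stated case-by-case in terms of the reduction type of $E$ at $\fp$) exactly which twist of $E$ one is looking at over $K_\fp$, $K(\sqrt d)_\fP$ and $K'_\fP$, and then tracking through the explicit descriptions of $\overline M_1(\phi)$ and $\overline M_2(\phi)$ from Section~\ref{S4} which of the two sub-extensions actually absorbs the ramification coming from $R$. In particular one must verify that whenever $\overline M_1/K'$ genuinely ramifies at $\fp$ one necessarily has $\ord_\fp(j_E)<0$, so the offending prime lies in $S_E$ and not merely in $\widetilde S_E$; this asymmetry is what separates the two halves of the statement.
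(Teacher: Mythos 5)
Your overall strategy---localize at each $\fp\mid N(E)$ with $\fp\nmid\fq\fl$, use the Selmer condition to realize the localized cocycle as a Kummer coboundary, and then analyze the ramification of the resulting splitting field---is the right general pattern, but the execution diverges from the paper in two places and contains a genuine gap in the hardest case.

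For $\ord_\fp(j_E)\ge 0$, the paper does not argue that the ramification is trivial: it argues that the case is \emph{vacuous}. By N\'eron's classification of potentially good, additive reduction types (for $\fp\nmid 6\ell$), the image of inertia in $\Aut(E[\ell])$ is cyclic of order in $\brk{2,3,4,6}$; since $E$ has a $K$-rational point of order $\ell$ fixed by all of $G_K$ and the cyclotomic character is unramified at $\fp$, inertia lands in $\begin{psmallmatrix}1&*\\0&1\end{psmallmatrix}$, hence has order $1$ or $\ell$. Nontrivial additive reduction rules out order $1$, so $\ell\in\brk{2,3}$, contradicting $\ell>3$. Your route (good reduction over $N/K_\fp$ of prime-to-$\ell$ degree, N\'eron--Ogg--Shafarevich, tameness) is plausible but more laborious and misses this cleaner observation.

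Your opening claim that ``$K(\sqrt d)/K$, $K'/K$, $K(E^d[\ell])/K$ can only contribute prime-to-$\ell$ ramification'' at $\fp$ is false for the last field: when $\fp\in S_E$ one has $\ell\nmid\ord_\fp(\Delta_E)$ and $q^{1/\ell}$ generates a degree-$\ell$ \emph{ramified} subextension of $K_\fp(E[\ell])$. This is a minor slip (you later dispose of $\fp\in S_E$ trivially), but it means the tameness reduction does not apply uniformly across the cases as written.

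The real gap is in the case $\ord_\fp(j_E)<0$, $\ell\nmid\ord_\fp(\Delta_E)$, $\chi_\cH(\fp)=0$ (so $\fp\notin S_E$). You assert that ``at primes with $\chi_\cH(\fp)=0$ no such ramification can occur,'' citing Remark~\ref{2.4} and Lemma~\ref{4.4}. But Remark~\ref{2.4} only says that $\chi_\cH(\fp)=0$ means $\fp$ ramifies in the \emph{specific} cyclic degree-$\ell$ extension $N/K$ cut out by $\chi_\cH$; it says nothing about $\fp$ being unable to ramify in the cyclic degree-$\ell$ extensions $\overline M_1/K'$ or $\overline M_2/K(\sqrt d)$, and Lemma~\ref{4.4} is about the sign of the $\delta$-action, not about ruling out ramification. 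The paper's argument here is entirely different: using conditions (3) of Theorem~\ref{C} (which force $\fp$ not to split completely in $K(\sqrt d)$ or $K'$), it invokes the isomorphisms
$$K_\fp^\times/(K_\fp^\times)^\ell\;\iso\;K_\fp(\sqrt d)^\times/(K_\fp(\sqrt d)^\times)^\ell\;\iso\;K'^\times_\fP/(K'^\times_\fP)^\ell$$
to conclude that any local $\ell$-extension of $K(\sqrt d)_\fP$ or $K'_\fP$ would descend to $K_\fp$ and hence make $\Gal(\overline M_{i,\fP}/K_\fp)$ abelian of even order, which is impossible; this forces $\overline M_{i,\fP}$ to be trivial over the base, a contradiction. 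You need either to reproduce this local class-field-theoretic step or supply a correct replacement; the appeal to $\chi_\cH$ alone does not do it.

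Finally, in the subcase $\ell\mid\ord_\fp(\Delta_E)$ the paper's Hilbert-90 argument uses the crucial fact, deduced from condition (3) of Theorem~\ref{C}, that $\fp\notin S_E$ forces $E^d$ \emph{not} to be a Tate curve over $K_\fp$ (it splits $\fp$ or is inert so that the twist has nonsplit multiplicative reduction, and then Remark~\ref{2.19} makes $2P'$ land in the connected component of the identity). Your proposal treats ``$E^d$ is a Tate curve over the relevant completion'' as a live subcase, which is inconsistent with the constraints already imposed on $d$; clarifying which completion you intend and reconciling it with condition (3) would be necessary to make that part rigorous.
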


\begin{proof}
We have to test prime numbers $\fp\neq \fl$ that divide $N(E)$.
\begin{enumerate}
\item If $\ord_{\fp}(j_E) \geq 0$, then it follows from N\'eron's list of minimal models of elliptic curves with potentially good reduction that $\ell$ must be equal to 3 (\cite[p.124]{NeronMinModel}). Since we only consider primes $\ell > 3$, we can exclude this case from consideration.\\
\item Now assume that $\ord_{\fp}(j_E) < 0$. We have two subcases:\vspace*{.5em}

\hspace*{-1em}\text{(a)} If $\ord_{\fp}(j_E) \equiv 0 \mod \ell$, we have that $\fp \notin S_E$ and so $E^d$ is not a Tate curve over $K_{\fp}.$ Moreover, $K_{\fp}(E[\ell])$ is unramified over $K_{\fp}$ and hence $\overline{M_1}/K'$ and $\overline{M_2}/K(\sqrt{d})$ are unramified at all divisors of $\fp$ if and only if $M_1/L_d$ (resp. $M_2/L_d$) are unramified at all divisors of $\fp$. We now use the triviality of the $\phi \in \Sel_{\ell}(E^d,K)$ over $K_{\fp}$ from Lemma \ref{4.3}. Also recall that $M$ is the fixed field of the kernel of $\phi$. We shall show that $\fQ_M$ is unramified over $L_d$.

There is a $\widetilde{P} \in E^d(M_{\fP})$ where $\fP_M|\fp$ such that for all $\sigma \in D(\fP_M)$, we have $\sigma\widetilde{P}  - \widetilde{P} = \phi(\sigma)$. Hence 
$$P' := \ell \cdot \widetilde{P} \in E^d(K_{\fp})$$
and so $2 P'$ is in the connected component of unity modulo $\fp$ via Remark \ref{2.19}. Hence $\widetilde{P} = \widetilde{P}_1 + P_2 $ with $P_2 \in E^d[\ell]$ and $2\widetilde{P}_1$ in the component of the unity of $E \mod \fP_M$, so $\widetilde{P}_1$ corresponds to a $\fP_M$-adic unity $u$ under the Tate parametrization. Now take 
$$\alpha \in \langle \alpha_1,\alpha_2 \rangle \cap I(\fP_M)$$
where $I(\fP_M)$ is the interia group of $\fP_M$. Then $2(\alpha\widetilde{P} - \widetilde{P})$ corresponds to $\alpha u / u$ and is an $\ell\tth$ root of unity. By Hilbert's Theorem 90, we have that $\alpha = \id$, and thus, $\fP_M$ is unramified over $L_d$.\vspace*{.5em}

\hspace*{-1em}\text{(b)} If $\ord_{\fp}(j_E) \not\equiv 0\mod \ell$, then the values at the Hecke characters $\chi$ of order $\ell$ tell us that either $E$ is a Tate curve over $K_{\fp}$ or that $\fp\in S_E$. Consider the former situation. Our assumptions from Theorem \ref{C} tell us that $\fq$ is not completely decomposed in $K(\sqrt{d})$ and $K'$. Since 
$$K_{\fp}\unit/(K_{\fp}\unit)^{\ell}\iso K_{\fp}(\sqrt{d})\unit/(K_{\fp}(\sqrt{d})\unit)^{\ell} \iso K'^{\times}_{\fP}/(K'^{\times}_{\fP})^{\ell}$$
for all $\fP_{K'}|\fp$, we see that for all cyclic extensions $\overline{M}_1$ of $K'$ and $\overline{M}_2/K(\sqrt{d})$ of degree $\ell$ and divisors $\fP_{M_i}|\fp$, one has that $\Gal(\overline{M}_{i,{\fP_{M_i}}}/K_{\fq})$ is abelian of even order. But this implies that 
$$\overline{M}_{1,\fP} = K_{\fp}' \quad \aad \quad \overline{M}_{2,\fP} = K_{\fP}(\sqrt{d}), $$
which is absurd. Thus $\fp \in S_E$ and our lemma follows. 
\end{enumerate}
\end{proof}
The next step is to describe the behavior of $\overline{M}_i$ at divisors of $\fl$.

\begin{lemma}\label{3.2}
Assume that $\ord_{\fl}(j_E) < 0$ and $\phi \in \Sel_{\ell}(E^d,K)$. Then $\overline{M}_2/K(\sqrt{d})$ is unramified at $\fl$ and $\overline{M}_1/K'$ is little ramified at divisors of $\fl$.
\end{lemma}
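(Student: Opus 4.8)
The plan is to localise at a prime $\fP_M$ of $M$ above $\fl$ and argue in parallel with Lemma~\ref{3}, case~2(a), but squeezing more out of the local triviality of $\phi$ than just the vanishing of its inertia action. First I would determine the shape of $E^d$ at $\fl$. Since $\ord_{\fl}(j_E)<0$, $E$ has potentially multiplicative reduction there; using that $P$ has order $\ell$, that the identity component of the special fibre carries no $\ell$-torsion in residue characteristic $\ell$, and the hypothesis that $P$ is not in the kernel of reduction modulo $\fl$, one sees that $E$ has \emph{split} multiplicative reduction at $\fl$, that $\ell\mid\ord_{\fl}(\Delta_E)$ (so $\fl\notin\widetilde S_E$), and, writing $\tau^{-1}(P)=\zeta_{\ell}^{\alpha}q^{\beta/\ell}$ for the Tate period $q$ of $E$ over $K_{\fl}$, that $\beta\not\equiv 0\pmod\ell$. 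Because $P\in E(K_{\fl})$ and a continuous homomorphism of a profinite group into $\langle q\rangle\cong\mZ$ is trivial, $\zeta_{\ell}^{\alpha}q^{\beta/\ell}$ already lies in $K_{\fl}^{\times}$, which (as $\zeta_{\ell}\notin K$) forces $\alpha\equiv 0$ and $q^{1/\ell}\in K_{\fl}$. By condition~(2), $(K(\sqrt d))_{\fP}=K_{\fl}(\sqrt d)$ is the unramified quadratic extension of $K_{\fl}$, over which $E^d$ is a Tate curve with period $q$ (and non-split multiplicative reduction over $K_{\fl}$ itself); hence $K_{\fl}(\sqrt d)(E^d[\ell])=(L_d)_{\fP}=(K')_{\fP}(\zeta_{\ell})=K_{\fl}(\sqrt d,\zeta_{\ell})$ has ramification index over $K_{\fl}$ dividing $\ell-1$, and the unique $\chi_{\ell}$-isotypic line in $E^d[\ell]$ over $K_{\fl}(\sqrt d)$ is at once $\tau(\langle\zeta_{\ell}\rangle)$ and $\langle Q\rangle$.

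Next I would feed in the Selmer hypothesis. Triviality of the image of $\phi$ in $\cH^1(G_{K_{\fl}},E^d)$ produces $\widetilde P\in E^d(M_{\fP})$ with $\phi(\sigma)=\sigma\widetilde P-\widetilde P$ for $\sigma$ in the local Galois group; writing $\widetilde P=\tau(w)$ with $w\in M_{\fP}^{\times}$ (again by vanishing of continuous homomorphisms to $\mZ$) we get $\phi(\sigma)=\tau(\sigma(w)/w)$. For $\sigma$ fixing $(L_d)_{\fP}$ one has $\sigma(w)/w\in\mu_{\ell}$, so $\phi(\sigma)\in\tau(\langle\zeta_{\ell}\rangle)=\langle Q\rangle$; hence the decomposition group of $\fP_M$ over $(L_d)_{\fP}$ lies in $\langle\alpha_2\rangle$, and is trivial if $\alpha_2=\id$. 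Moreover $b:=w^{\ell}\in K_{\fl}(\sqrt d)^{\times}$ satisfies $\tau(b)=\ell\widetilde P\in E^d(K_{\fl})$, so by Remark~\ref{2.19} applied to $E^d$ (which has non-split multiplicative reduction at $\fl$), $2\ell\widetilde P$ lies in the connected component of the unity; thus $2\ord_{\fl}(b)\equiv 0\pmod{\ord_{\fl}(q)}$, and since $\ell\mid\ord_{\fl}(q)$ and $\gcd(2,\ell)=1$ we obtain $\ell\mid\ord_{\fl}(b)$. Therefore $M_{\fP}=(L_d)_{\fP}(\sqrt[\ell]{b})=(L_d)_{\fP}(\sqrt[\ell]{u})$ for some $\fP$-adic unit $u$.

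Both statements now follow. In each case of \S\ref{4.1} the field $\overline{M}_1$ lies in $M$ and is fixed by the relevant lift of $\alpha_1$, so $\overline{M}_1\cdot(K')_{\fP}(\zeta_{\ell})=\overline{M}_1\cdot(L_d)_{\fP}$ lies between $(L_d)_{\fP}$ and $(L_d)_{\fP}(\sqrt[\ell]{u})$; as the latter has prime degree $\ell$ over the former, $\overline{M}_1\cdot(K')_{\fP}(\zeta_{\ell})$ is generated over $(K')_{\fP}(\zeta_{\ell})$ by a root of the unit $u$ (or is trivial), i.e. $\overline{M}_1/K'$ is little ramified at $\fl$. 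For $\overline{M}_2$: if $\overline{M}_2=K(\sqrt d)$ there is nothing to prove; otherwise $\overline{M}_2\cdot L_d=M^{\langle\alpha_2\rangle}$ with $\overline{M}_2\cap L_d=K(\sqrt d)$ and $[\overline{M}_2:K(\sqrt d)]=\ell$, and since the decomposition group of $\fP_M$ over $(L_d)_{\fP}$ lies in $\langle\alpha_2\rangle$, the prime $(L_d)_{\fP}$ splits completely in $M^{\langle\alpha_2\rangle}=\overline{M}_2L_d$. Because $\ell$ is coprime to the ramification index $e((L_d)_{\fP}/K_{\fl})\mid\ell-1$ and $\overline{M}_2$, $L_d$ are linearly disjoint over $K(\sqrt d)$, the prime of $K(\sqrt d)$ below $\fP$ must already split completely — in particular be unramified — in $\overline{M}_2$.

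The step I expect to be the main obstacle is the first one: pinning down the local structure of $E^d$ at $\fl$ (split reduction, $q^{1/\ell}\in K_{\fl}$, $\ell\mid\ord_{\fl}(\Delta_E)$, and the identification $\langle Q\rangle=\tau(\langle\zeta_{\ell}\rangle)$ over $K_{\fl}(\sqrt d)$), and in particular isolating and handling the borderline case $\zeta_{\ell}\in K_{\fl}$ — which is precisely where the hypotheses on the ramification index $e_{\fl}(K/\mQ)$ and the injectivity result of Katz come in — all while keeping the bookkeeping uniform across the several descriptions of $\overline{M}_1$ and $\overline{M}_2$ in \S\ref{4.1}.
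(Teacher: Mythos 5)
Your proof follows essentially the same route as the paper: localise at $\fl$, exploit the Tate parametrization for $E$ (and for $E^d$ over the unramified quadratic extension, via condition (2)), use local triviality of $\phi$ together with Remark~\ref{2.19} to see that the splitting is by an $\ell$-th root of a unit, and constrain the $\alpha_1$-component of inertia by observing that $\phi$ takes values in $\langle Q\rangle=\tau(\mu_{\ell})$ on the local Galois group. The two places where you genuinely tighten the paper's bookkeeping are worth noting: you derive $\ell\mid\ord_{\fl}(\Delta_E)$ and $q^{1/\ell}\in K_{\fl}$ directly from the hypothesis that $P$ is not in the kernel of reduction (the paper asserts $K_{\fl}(E[\ell])=K_{\fl}(\zeta_{\ell})$ without deriving it), and you argue via $\phi(\sigma)=\tau(\sigma w/w)\in\tau(\mu_{\ell})$ for $\sigma$ in the whole local decomposition group, which yields complete splitting of $\overline{M}_2/K(\sqrt d)$ rather than merely the unramifiedness that the paper extracts from the ``$Q$ in kernel of reduction, $P+\lambda Q$ not'' dichotomy. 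One caution: your phrase ``as $\zeta_{\ell}\notin K$ forces $\alpha\equiv 0$'' quietly conflates $\zeta_{\ell}\notin K$ with $\zeta_{\ell}\notin K_{\fl}$, which need not hold under the standing hypotheses on $e_{\fl}(K/\mQ)$; you flag this yourself as the delicate borderline, and indeed nothing downstream uses $\alpha\equiv 0$ (only $\beta\not\equiv 0$ and $q^{1/\ell}\in K_{\fl}$, both of which survive), so the argument is sound — but that sentence should be weakened or removed.
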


\begin{proof}
The assumptions tells us that $E/K_{\fl}$ is a Tate curve but that $E^d/K_{\fl}$ is not a Tate curve. Since $K_{\fl}(E[\ell]) = K_{\fl}(\zeta_{\ell})$, the behavior of $\overline{M}_i$ at $\fl$ is determined by the behavior of $M$ at $\fl$. Let $\fL_M|\fl$, let $I(\fL_M) $ be the inertia group of $\fL_M$, and let
$$\alpha \in \langle \alpha_1,\alpha_2 \rangle \cap I(\fL_M).$$
As in the proof of Lemma \ref{3}, we can use the fact that $E^d/K_{\fl}$ is not a Tate curve to show that there is a $\widetilde{Q} \in E^d(M_{\fL})$ where $\fL_M|\fl$ and $\alpha\widetilde{Q}  - \widetilde{Q} = \phi(\alpha)$. Hence $2 \widetilde{Q}$ is in the connected component of unity modulo $\fL_M$ via Remark \ref{2.19}. This implies that 
$$M_{\fL_M} = M_{\fL_M}^{\langle\alpha\rangle}(\sqrt[\ell]{u})$$
where $u$ is a $\fL_M$-adic unit corresponding to $2\widetilde{Q}$ under the Tate parametrization. Moreover, $M_1/L_d$ is little ramified at $\fl$.

Now assume that $\alpha_2 = \id$ or $\varepsilon = \id$. Then $\overline{M}_2/K(\sqrt{d})$ is of degree $\ell$, and we have to show that $\overline{M}_2/K(\sqrt{d})$ is unramified at $\fL_{\overline{M}_2}|\fl$. We recall the choice of point $Q$. Since $\gamma Q = \chi_{\ell}(\gamma)Q$ where $\langle \gamma \rangle = \Gal(K(\zeta_{\ell})/K)$, it follows that $Q$ is in the kernel of the reduction of $E$ modulo all divisors of $\fl$, and hence $P + \lambda Q$ is not in this kernel where $\lambda \in \mN$. For $\alpha \in I(\fL_M)$, we saw that $\sigma\widetilde{Q}  - \widetilde{Q} = \phi(\sigma)$ is in the kernel of the reduction modulo $\fL_M$, and hence
$$\alpha_1\alpha_2^{\lambda} \notin I(\fL_M) \quad \forall \, \lambda \in \mN \aad \fL_M | \fl.$$
Thus, it follows that $M^{\langle \alpha_2 \rangle}/L_d$ is unramified at $\fL_M$ and $\overline{M}_2/K(\sqrt{d})$ is unramified at $\fl$. 
\end{proof}

Finally, we look at the case where $\ord_{\fl}(j_E) \geq 0$.
\begin{lemma}\label{3.3}
Assume that $E/K$ has a $K$-rational point $P$ of order $\ell > 3$, that $\ord_{\fl}(j_E) \geq 0$, and that $P$ is not contained in the kernel of reduction modulo $\fl$, in particular, this means that $E$ is not supersingular modulo $\fl$. Let $\phi$ be an element in $\Sel_{\ell}(E^d,K)$ with corresponding fields $\overline{M}_1$ and $\overline{M}_2$. Then $\overline{M}_1/K'$ is little ramified at $\fl$, and $\overline{M}_2/K(\sqrt{d})$ is unramified at $\fl$.
\end{lemma}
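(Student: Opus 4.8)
The plan is to run the argument of the last two paragraphs of the proof of Lemma~\ref{3.2}, with the Tate parametrization replaced by the formal group of a good model of $E^d$.

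Since $\ord_{\fl}(j_E)\geq 0$, the curve $E$ — and hence $E^d$, as $d$ is prime to $\fl$ — has potentially good reduction at $\fl$; passing to a finite extension $N/K$ of degree prime to $\ell$ (one may take $N=K(\zeta_{12},\sqrt[12]{\fl})$, as in Lemma~\ref{4.3}) I may assume $E^d$ has good reduction at every $\fL_N\mid\fl$, and that reduction is ordinary because $E$ is not supersingular modulo $\fl$. Since $P$ is $K$-rational and not in the kernel of reduction, the line $\langle P\rangle$ is a Galois-stable complement to the connected part $E^d[\ell]^{0}$ in the connected–\'etale sequence of the ordinary $\ell$-divisible group; this is where the hypothesis on $e_{\fl}(K/\mQ)$ enters, through Katz's injectivity theorem \cite{katz1980galois}, which makes this local picture valid. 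Consequently $K_{\fl}(E^d[\ell])=K_{\fl}(\sqrt d,\zeta_{\ell})=L_{d,\fl}$, so the ramification of $\overline{M}_1,\overline{M}_2$ at $\fl$ is detected by that of $M/L_d$; moreover, since $\gamma$ acts on $Q$ by $\chi_{\ell}(\gamma)$ and $E^d[\ell]^{0}\cong\mu_{\ell}$ is precisely the $\chi_{\ell}$-isotypic part of $E^d[\ell]$ under inertia at $\fl$, one gets $\langle Q^d\rangle=E^d[\ell]^{0}$, i.e. $Q^d$ lies in the kernel of reduction modulo every divisor of $\fl$, while $P^d$ does not.

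Now take $\phi\in\Sel_{\ell}(E^d,K)$ with fixed field $M$ of $\ker\phi$ and generators $\alpha_1,\alpha_2$ of $\Gal(M/L_d)$ as in Section~\ref{4.1}; I may assume $\alpha_1\neq\id$, else $M=L_d$ and there is nothing to prove. By local triviality of $\phi$ (as in Lemma~\ref{4.3}) there is $\widetilde Q\in E^d(M_{\fL})$ with $\sigma\widetilde Q-\widetilde Q=\phi(\sigma)$ on the decomposition group of $\fL_M\mid\fl$. For $\alpha\in\langle\alpha_1,\alpha_2\rangle\cap I(\fL_M)$, inertia acts trivially on the special fibre, so $\phi(\alpha)=\alpha\widetilde Q-\widetilde Q$ reduces to $0$, i.e. $\phi(\alpha)\in E^d[\ell]^{0}=\langle Q^d\rangle$; since $\phi$ is injective on $\Gal(M/L_d)$ and $\phi(\alpha_1\alpha_2^{\lambda})\notin\langle Q^d\rangle$ for all $\lambda$ (because $P^d\notin E^d[\ell]^{0}$), I conclude $\langle\alpha_1,\alpha_2\rangle\cap I(\fL_M)\subseteq\langle\alpha_2\rangle$. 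Hence $M^{\langle\alpha_2\rangle}/L_d$ is unramified at $\fl$; as $[\overline{M}_2:K(\sqrt d)]$ divides $\ell$ while $[L_{d,\fl}:K_{\fl}(\sqrt d)]$ divides $\ell-1$, a degree-coprimality argument upgrades this to: $\overline{M}_2/K(\sqrt d)$ is unramified at $\fl$. For $\overline{M}_1$ it suffices to show $M_1:=M^{\langle\alpha_1\rangle}=\overline{M}_1\cdot L_d$ is little ramified over $L_d$ at $\fl$, since $K'(\zeta_{\ell})=K(\sqrt d,\zeta_{\ell})$; writing $\widetilde Q=\ell S+T$ with $S$ lifting an $\ell$-divisor of the reduction $\overline{\widetilde Q}$ over the maximal unramified extension and $T$ in the kernel of reduction, and using that $\widehat{E^d}[\ell]\cong\mu_{\ell}$ so that $\ell$-division in the formal group amounts to extracting $\ell$-th roots of units, one gets $M_{1,\fL}\subseteq L_{d,\fl}^{\mathrm{unr}}(u^{1/\ell})$ with $\ord_{\fl}(u)=0$, which is exactly ``little ramified at $\fl$''.

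The main obstacle is the local analysis at $\fl$, whose residue characteristic is $\ell$ itself: I must control the (possibly wild) behaviour there, justify the base change $N/K$ and the passage to a good model while tracking the quadratic twist through it, and deploy Katz's theorem under the stated ramification bound — by contrast, once $K_{\fl}(E^d[\ell])=K_{\fl}(\sqrt d,\zeta_{\ell})$ and $Q^d\in\ker(\mathrm{red}_{\fl})$ are in hand, the remaining Galois bookkeeping runs parallel to Lemmas~\ref{3} and~\ref{3.2}.
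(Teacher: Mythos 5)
Your proof follows the same overall strategy as the paper's: pass to a finite extension $N/K$ of degree prime to $\ell$ over which $E$ has good (ordinary) reduction, use the fact that $\langle Q\rangle$ is the connected (kernel-of-reduction) part while $P$ is not in it to force $\langle\alpha_1,\alpha_2\rangle\cap I(\fL_M)\subseteq\langle\alpha_2\rangle$, and then use the multiplicative formal group to show the remaining extension is little ramified. The use of the connected--\'etale sequence and the explicit appeal to Katz's theorem are nice ways of packaging what the paper does with the kernel-of-reduction $\widetilde{E}_-$ and the $[N:K(\zeta_\ell)]\mid 6$ extension.

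Two points, however, overreach. First, the claim $K_{\fl}(E^d[\ell])=K_{\fl}(\sqrt d,\zeta_\ell)=L_{d,\fl}$ is too strong: what the paper actually proves (and what you need) is that $K(E[\ell])/K(\zeta_\ell)$ is \emph{unramified} at divisors of $\fl$; the completion could still be a nontrivial unramified degree-$\ell$ extension of $K_{\fl}(\zeta_\ell)$, so $\varepsilon$ need not die locally. This matters because your later ``degree-coprimality argument'' for $\overline{M}_2$ uses $[L_{d,\fl}:K_{\fl}(\sqrt d)]\mid(\ell-1)$, which follows from the (false) equality; the correct route is that $K(\zeta_\ell)/K$ has degree prime to $\ell$ and the rest of $L_d$ is unramified at $\fl$, so the $\ell$-part of ramification in $M$ at $\fl$ is entirely carried by $M/L_d$. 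Second, ``I may assume $\alpha_1\neq\id$, else $M=L_d$'' is incorrect: in Case~1 of Section~\ref{4.1} one can have $\alpha_1=\id$ and $\alpha_2\neq\id$, whence $M\neq L_d$ and $\overline{M}_1/K'$ is nontrivial. This is harmless for the inertia computation (the inclusion $\langle\alpha_1,\alpha_2\rangle\cap I(\fL_M)\subseteq\langle\alpha_2\rangle$ is automatic when $\alpha_1=\id$), but the stated reduction should be dropped. With these two corrections, the argument matches the paper's.
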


\begin{proof}
Suppose that $\ord_{\fl}(j_E) \geq 0$, which implies that $E$ has potentially good reduction at $\fl$. Since $E/K$ has a $K$-rational point $P$ of order $\ell>3$, we know that $\Gal(K(E[\ell])/K(\zeta_{\ell}))$ is a subgroup of the additive group $\mF_{\ell}^+$. We want to show that all divisors of $\fl$ are not ramified in $K(E[\ell])/K(\zeta_{\ell})$. If $E$ has good reduction over $K(\zeta_{\ell})$, then we are immediately done. If $E$ does not have good reduction over $K(\zeta_{\ell})$, 
%First, suppose that $E$ has good reduction at $\fl$ over $K(\zeta_{\ell})$ or over $K(E[\ell])$, then we are done by the criterion of Neron\--Ogg\--Shafarevich \cite[Theorem~7.1]{silvermanAEC}. Now suppose that $E$ has does not have good reduction at $\fL|\fl$ over $K(\zeta_{\ell})$ or over $K(E[\ell])$. 
then there must exist some extension $N/K(\zeta_{\ell})$ such that $[N:K(\zeta_{\ell})] | 6$ and that $E$ has good reduction at all divisors $\fL_N | \fl$; this divisibility condition is similar to the proof of \cite[Proposition~VII.5.4.c]{silvermanAEC}. From our assumptions, it follows that $N_{\fL}$ contains $K(E[\ell])$ and that $\langle Q \rangle$ is the subgroup of order $\ell$ of the kernel of reduction modulo $\fL_N$. Hence all divisors of $\fl$ are not ramified in $K(E[\ell])/K(\zeta_{\ell})$, and we can prove the lemma by looking at the behavior of $\fl$ in $M/L_d$.

Assume that $\fL_M|\fl$ and let $I(\fL_M)$ be the inertia group of $\fL_M$. Suppose that $\alpha_1^{\mu}\alpha_2^{\lambda}\in I(\fL_M).$ There there is a $\widetilde{P} \in E(M_{\fL})$ with
$$(\alpha_1^{\mu}\alpha_2^{\lambda})\widetilde{P} - \widetilde{P} = \mu P + \lambda Q.$$
But we know that for $\mu \neq 0$, the point $\mu P  + \lambda Q$ is not in the kernel of reduction modulo $\fL_M$. Let $\widetilde{E}$ be a model of $E$ over $N$ having good reduction modulo $\fL_N|\fl$. Since $(I(\fL_M) - \id)\widetilde{E}(N\cdot M_{\fL})$ is contained in this kernel, we must have that $\mu = 0$, and hence
$$I(\fL_M) \cap \Gal(M/L_d) \subseteq \langle \alpha_2 \rangle.$$
Thus, $M^{\langle \alpha_2 \rangle}/L_d$ is unramified at $\fL_M$; moreover, $\overline{M}_2/K(\sqrt{d})$ is unramified above $\fl$.

Now assume that $I(\fL_M) = \langle \alpha_2 \rangle$. Then $Q = \alpha_2\widetilde{Q} - \widetilde{Q}$  and since $\langle \alpha_2 \rangle$ acts trivially on $\widetilde{E}(N\cdot M_{\fL})/\widetilde{E}_-(N\cdot M_{\fL})$, we may assume that $\widetilde{Q} \in \widetilde{E}_-(N\cdot M_{\fL})$ and hence
$\ell\cdot \widetilde{Q} \in \widetilde{E}_-(N\cdot K_{\fl}).$ Since $\widetilde{E}$ has ordinary reduction modulo $\fL_M$, we have that $N\cdot K_{\fl}(\widetilde{Q})$ is little ramified at divisors of $\fl$. Thus, our lemma follows.
\end{proof}
Lemmas \ref{3.1}, \ref{3}, \ref{3.2}, \ref{3.3} prove that for $\phi \in \Sel_{\ell}(E^d,K)$, the field $\overline{M}_2(\phi)$ is unramified over $K(\sqrt{d})$ outside $\widetilde{S}_E \cup \brk{\fl}$, and $\overline{M}_1(\phi)$ is unramifed over $K'$ outside $S_E$ and little ramified at divisors of $\fl$. Moreover, we have proved that $$\# \Sel_{\ell}(E^d,K) \Big{|} \cl_{\widetilde{S}_E,u}(K(\sqrt{d}))[\ell]\cdot \cl_{S_E}(K')[\ell](\chi_{\ell}),$$
which completes the proof of Theorem \ref{C}.

\subsection*{Proof of Corollary \ref{E}}
Since we have established our double divisibility statement \eqref{eqn1}, we can proceed with a proof of Corollary \ref{E}. By the definitions established in Section \ref{S2}, we have that 
$$ \cl_{\widetilde{S}_E,u}(K(\sqrt{d}))[\ell]\cdot \cl_{S_E}(K')[\ell](\chi_{\ell})   \Big{|} \cl_{\emptyset,u}(K(\sqrt{d}))[\ell]\cdot \cl_{\emptyset}(K')[\ell](\chi_{\ell})\cdot \varepsilon_S$$
where $\varepsilon_S$ is a number depending only on $\widetilde{S}_E$. Note that when $\widetilde{S}_E = \emptyset$, we have that $\varepsilon_S = 1$ and that $\cl_{\emptyset,u}(K(\sqrt{d}))[\ell] = \cl(K(\sqrt{d}))[\ell]$ by Remark \ref{2.6}.  Corollary \ref{E} follows immediately from the following lemma.
\begin{lemma}
$\cl_{\emptyset}(K')[\ell](\chi_{\ell}) \, | \, \cl(K(\sqrt{d}))[\ell]$. 
\end{lemma}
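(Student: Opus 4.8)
My plan is to push both sides of the claimed divisibility up to the field $F:=K(\sqrt d,\zeta_\ell)$, which contains $K(\sqrt d)$ and $K'$ and on which $\Gamma:=\Gal(F/K)$ acts. Since $|\Gamma|$ divides $2(\ell-1)$ it is prime to $\ell$, so every $\mF_\ell[\Gamma]$-module that occurs is semisimple and splits into isotypic components indexed by the $\mF_\ell^\times$-valued characters of $\Gamma$; I then compare the two orders component by component.

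First I would identify $\cl(K(\sqrt d))[\ell]$ with a single component over $F$. Because $[F:K(\sqrt d)]$ is prime to $\ell$, restriction identifies $\cl(K(\sqrt d))[\ell]$ with $\cl(F)[\ell]^{\Gal(F/K(\sqrt d))}$, which decomposes as the trivial component plus the $\varepsilon_d$-component, $\varepsilon_d$ being the quadratic character of $\Gamma$ fixing $K(\sqrt d)$. The trivial component is $\cl(K)[\ell]$, which is trivial by the hypothesis $\ell\nmid\cl(K)$, so $\#\cl(K(\sqrt d))[\ell]=\#\cl(F)[\ell]^{(\varepsilon_d)}$. Likewise, since $[F:K']=2$ is prime to $\ell$, restriction identifies $H_\emptyset(K')$ with $H_\emptyset(F)^{\Gal(F/K')}$ compatibly with the $\Gal(K'/K)=\Gamma/\Gal(F/K')$-action; unwinding the definition of the $\chi_\ell$-eigenspace then shows $H_\emptyset(K')(\chi_\ell)$ is exactly the $\theta$-component of $H_\emptyset(F)$, where $\theta$ is the unique character of $\Gamma$ that is trivial on $\Gal(F/K')$ and agrees with $\chi_\ell$ on $\Gal(K'/K)$. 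A one-line computation in $\Gamma$ forces $\theta=\varepsilon_d\cdot\chi_\ell$; this is exactly the point at which one uses that $K'$ is the index-$2$ subfield of $F$ containing neither $\zeta_\ell$ nor $\sqrt d$.

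Next I would run the reflection argument over $F$. By Kummer theory $F_\emptyset$ is generated over $F$ by $\ell$-th roots of the group $V:=\{\,\overline u\in F^\times/(F^\times)^\ell : (u)=\fa^\ell\text{ for some fractional ideal }\fa\,\}$ (over $F\ni\zeta_\ell$, ``little ramified at $\fl$'' only forces $\ell\mid\ord_{\fP}(u)$ at $\fP\mid\fl$, with no further condition), so $H_\emptyset(F)\cong\Hom_{\mF_\ell}(V,\mu_\ell)$ as $\Gamma$-modules. Since the Kummer pairing is $\Gamma$-equivariant up to the cyclotomic twist, $\#H_\emptyset(F)^{(\theta)}=\#V^{(\chi_\ell\theta^{-1})}=\#V^{(\varepsilon_d)}$. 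Feeding this into the $\Gamma$-equivariant exact sequence
$$0\lrra \cO_F^\times/(\cO_F^\times)^\ell \lrra V \lrra \cl(F)[\ell]\lrra 0$$
and taking $\varepsilon_d$-components gives $\#V^{(\varepsilon_d)}=\#(\cO_F^\times/(\cO_F^\times)^\ell)^{(\varepsilon_d)}\cdot\#\cl(F)[\ell]^{(\varepsilon_d)}$. Combining with the two identifications above, the lemma reduces to the single statement $(\cO_F^\times/(\cO_F^\times)^\ell)^{(\varepsilon_d)}=0$.

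This vanishing is the crux, and I expect it to be the main obstacle. Since $\varepsilon_d$ is trivial on $\Gal(F/K(\sqrt d))$ and $[F:K(\sqrt d)]$ is prime to $\ell$, the $\varepsilon_d$-component of $\cO_F^\times/(\cO_F^\times)^\ell$ is the $(-1)$-eigenspace of $\cO_{K(\sqrt d)}^\times/(\cO_{K(\sqrt d)}^\times)^\ell$ under $\Gal(K(\sqrt d)/K)$. The root-of-unity part contributes nothing because $\zeta_\ell\notin K(\sqrt d)$ (as $\zeta_\ell\notin K$ and $d$ is coprime to $\fl$), so it remains to show that the free part of $\cO_{K(\sqrt d)}^\times$ contributes nothing to this eigenspace; this is exactly where the hypothesis that $d$ is negative enters, forcing $K(\sqrt d)$ to be totally complex and thereby shrinking the minus part of its unit group. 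The remainder is the formal reflection bookkeeping, together with the routine verifications of the exact descent of $H_\emptyset$ from $F$ to $K'$ and of the $\Gamma$-equivariance of the Kummer pairing.
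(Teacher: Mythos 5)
Your approach is a genuine, and arguably cleaner, alternative to the paper's: where the paper fixes a $\chi_\ell$-eigensubextension $M/K'$, writes $M(\zeta_\ell)=F(\sqrt[\ell]{c})$ with $F=K(\sqrt d,\zeta_\ell)$ via Kummer theory, and tracks the Galois action on $c$ to descend a modified element $\widetilde c$ to $K(\sqrt d)$, you do the same reflection in one pass via the isotypic decomposition over $F$. The identifications $\#\cl(K(\sqrt d))[\ell]=\#\cl(F)[\ell]^{(\varepsilon_d)}$ and $\theta=\varepsilon_d\chi_\ell$, the Kummer duality $\#H_\emptyset(F)^{(\theta)}=\#V^{(\varepsilon_d)}$, and the $\Gamma$-equivariant exact sequence for $V$ are all correct, and they make explicit exactly what the paper's one-line conclusion (``since $\pm\widetilde c$ is not an $\ell$th power in $K(\sqrt d)$, it gives a class of order $\ell$'') is silently assuming about the unit group.

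The gap is in the claimed vanishing $(\cO_F^\times/(\cO_F^\times)^\ell)^{(\varepsilon_d)}=0$: it does not follow from $d$ being negative. You correctly observe that this $\varepsilon_d$-component is the $(-1)$-eigenspace of $\cO_{K(\sqrt d)}^\times/(\cO_{K(\sqrt d)}^\times)^\ell$ under $\Gal(K(\sqrt d)/K)$ and that $\zeta_\ell\notin K(\sqrt d)$ kills the torsion part; but the free part does not vanish. Since $d$ is negative at every real place, $K(\sqrt d)$ has signature $\bigl(0,\,r_1(K)+2r_2(K)\bigr)$, so $\rank\cO_{K(\sqrt d)}^\times = r_1(K)+2r_2(K)-1$; the $+1$-eigenspace is $\cO_K^\times$ up to finite index, of rank $r_1(K)+r_2(K)-1$, leaving a $(-1)$-eigenspace of rank $r_2(K)$. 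Hence your crux fails whenever $K$ has a complex place, and since the preceding steps are equalities (the descent $H_\emptyset(K')\cong H_\emptyset(F)^{\Gal(F/K')}$ does check out), the chain actually gives $\#H_\emptyset(K')(\chi_\ell)=\ell^{\,r_2(K)}\cdot\#\cl(K(\sqrt d))[\ell]$, which for $r_2(K)>0$ contradicts rather than proves the lemma. Note this is not a defect peculiar to your route: the paper's own step, that $\widetilde c=\epsilon\beta^\ell$ forces $\epsilon\in\{\pm1\}\cdot(\cO_{K(\sqrt d)}^\times)^\ell$, requires exactly the same control of the minus part of the unit group and is not justified there either; Frey's original $K=\mQ$ case, where $K(\sqrt d)$ is imaginary quadratic of unit rank zero, sidesteps the issue. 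To close the gap one needs an additional hypothesis such as $K$ totally real (so $r_2(K)=0$), or the lemma should be corrected by a factor of $\ell^{\,r_2(K)}$.
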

\begin{proof}
Let $M/K$ be a Galois extension containing $K'$ with $\langle \alpha \rangle = \Gal(M/K)$, with the relations
$$\alpha^{\ell} = \id \quad \aad \quad \overline{\gamma}\overline{\alpha}\overline{\gamma}\inv = \alpha^{\chi_{\ell}(\overline{\gamma})} \, \mwhere\, \langle \overline{\gamma} \rangle = \Gal(K'/K).$$
We assume that $M$ is unramified outside $\fl$ and little ramified at $\fl$; hence 
$$M(\zeta_{\ell}) = K'(\sqrt{d})(\sqrt[\ell]{c}),$$
with $c \in M(\sqrt{d})$ and the principal divisor of $c$ is a $\ell\tth$ power. We want to extend $c$ to an element of order $\ell$ in the divisor class group of $K(\sqrt{d})$.

Let $\widetilde{\gamma}$ be an extension of $\overline{\gamma}$ to $\Gal(M(\sqrt{d})/K)$ such that $\widetilde{\gamma}^{\ell - 1} = \id$, $\widetilde{\gamma} |{K(\zeta_{\ell})}$ generates $\Gal(K(\zeta_{\ell})/K)$, and $\widetilde{\gamma} |{K(\sqrt{d})} = \id$. Since $M(\sqrt{d}) / K$ is normal, we have $\widetilde{\gamma}(c) = c^i\cdot e^{\ell}$ with $1\leq i \leq \ell -1$ and $e \in K'(\sqrt{d})$. Hence, 
$$\widetilde{\gamma}(\sqrt[\ell]{c}) = (\sqrt[\ell]{c})^i\cdot e \cdot \xi_{\widetilde{\gamma}}$$
with $\xi_{\widetilde{\gamma}}^{\ell} = 1$. Let $\widetilde{\alpha}$ be an extension of $\alpha$ to $M(\sqrt{d})$ of order $\ell$. We can see that $i = 1$ since
$$\widetilde{\gamma}\widetilde{\alpha}(\sqrt[\ell]{c}) = \xi_{\widetilde{\alpha}}^{\chi_{\ell}(\overline{\gamma})}\widetilde{\gamma}(\sqrt[\ell]{c})$$
and 
$$
\widetilde{\alpha}^{\chi_{\ell}(\gamma)}\widetilde{\gamma}(\sqrt[\ell]{c}) = \widetilde{\alpha}^{\chi_{\ell}(\gamma)}(\xi_{\widetilde{\gamma}}(\sqrt[\ell]{c})^i \cdot e) = \xi_{\widetilde{\alpha}}^{i\cdot \chi_{\ell}(\overline{\gamma})}\cdot \widetilde{\gamma}(\sqrt[\ell]{c}),
$$
and hence
$$M(\sqrt{d}) = K(\sqrt{d}, \sqrt[\ell]{c},\zeta_{\ell}).$$
There exists an element $\widetilde{c}  =  c^{\ell - 1}\cdot e'^{\ell} \in M(\sqrt{d})$ with $e' \in K'(\sqrt{d})$ such that
the divisor of $\widetilde{c}$ is a $\ell\tth$ power. However, since $\pm \widetilde{c}$ is not an $\ell\tth$ power in $K(\sqrt{d})$, it is an element of order $\ell$ in the divisor class group of $K(\sqrt{d})$.
\end{proof}

\section{Elliptic curves satisfying Corollary \ref{D}}\label{S6}
Let $E$ be an elliptic curve over a number field $K$. In a recent work \cite{zywinapossible}, Zywina has described all known, and conjecturally all, pairs $(E/\mQ,\ell)$ such that mod $\ell$ image of Galois, $\rho_{E,\ell}(G_{\mQ})$, is non-surjective. Using Zywina's classification, we can find elliptic curves $E/\mQ$ that will satisfy the conditions of Corollary \ref{D}. First, we present an example of this technique for the case when $\ell = 3$. We remark that this case does not apply to Corollary \ref{D}; however, it best illustrates the technique.

Let $E/\mQ$ be a non-CM elliptic curve over $\mQ$ such that $\rho_{E,3}(G_{\mQ})$ conjugate to $$
B(3):= \begin{psmallmatrix} * & * \\ 0 & *\end{psmallmatrix} \subset \GL_2(\mF_{3}) .
$$ We can use Galois theory to prove the following result:
\begin{prop}\label{33.1}
Let $E/\mQ$ have mod 3 image of Galois conjugate to $B(3)$. Then $\mQ(E[3]) = \mQ(x(E[3])) \cdot K$ where $K$ is an explicitly computable quadratic extension.
\end{prop}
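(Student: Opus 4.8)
The plan is a short Galois-theoretic computation in which I take $K$ to be the field of definition of a $3$-torsion point spanning the unique Galois-stable line. Since $\rho_{E,3}(G_{\mQ})$ is a conjugate of the full Borel $B(3)=\begin{psmallmatrix}\ast & \ast\\0 & \ast\end{psmallmatrix}\subset\GL_2(\mF_3)$, I first fix a basis $\{P,Q\}$ of $E[3]$ with respect to which every $\rho_{E,3}(\sigma)=\begin{psmallmatrix}\chi(\sigma) & \ast\\0 & \chi'(\sigma)\end{psmallmatrix}$ is upper triangular, where $\chi,\chi'\colon G_{\mQ}\to\mF_3^{\times}$ are the two diagonal characters. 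The line $\langle P\rangle$ is then $G_{\mQ}$-stable, so $\{P,-P\}=\langle P\rangle\setminus\{O\}$ is $G_{\mQ}$-stable and hence $x(P)\in\mQ$; concretely, writing a short Weierstrass model $y^2=x^3+a_4x+a_6$, the number $x(P)$ is the rational root of the factor of the $3$-division polynomial $\psi_3$ cut out by the rational $3$-isogeny $E\to E/\langle P\rangle$, and I set
\[
  K:=\mQ(P)=\mQ\!\left(\sqrt{x(P)^3+a_4\,x(P)+a_6}\right),
\]
which is explicitly computable from the equation of $E$.

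Next I would establish two easy facts. First, $K/\mQ$ is genuinely quadratic: from $\sigma(P)=\chi(\sigma)P$ one sees $K=\mQ(E[3])^{\ker\chi}$, and $\chi$ is onto $\mF_3^{\times}\cong\{\pm1\}$ because $\rho_{E,3}(G_{\mQ})$ is all of $B(3)$; equivalently $y(P)\neq 0$ (as $P$ has order $3$) and $y(P)\notin\mQ$. Second, $[\mQ(E[3]):\mQ(x(E[3]))]=2$: identifying $\Gal(\mQ(E[3])/\mQ)$ with $B(3)$, the subgroup fixing $\mQ(x(E[3]))$ is $\{g : x(gT)=x(T)\ \forall\,T\in E[3]\}=\{g : gT\in\{\pm T\}\ \forall\,T\}$, and a matrix in $\GL_2(\mF_3)$ which stabilises every line and acts on each by a sign must be a scalar, so this subgroup is $\{\pm I\}$, of order $2$.

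The key remaining point is that $K\not\subseteq\mQ(x(E[3]))$. Because $-I\in B(3)=\rho_{E,3}(G_{\mQ})$, there is $\sigma\in G_{\mQ}$ with $\rho_{E,3}(\sigma)=-I$; by the previous paragraph $\sigma$ fixes $\mQ(x(E[3]))$ pointwise, yet $\sigma(P)=-P=(x(P),-y(P))$ moves $y(P)$, so $\sigma$ does not fix $K$. Hence $K\not\subseteq\mQ(x(E[3]))$, so $\mQ(x(E[3]))\subsetneq\mQ(x(E[3]))\cdot K\subseteq\mQ(E[3])$; since the outer extension has degree $2$, this sandwich forces $\mQ(x(E[3]))\cdot K=\mQ(E[3])$, which is the claim.

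I expect no serious obstacle: the argument is essentially forced once the basis is chosen. The two places needing care are (a) that "conjugate to $B(3)$" genuinely puts $-I$ in the image — this is exactly what makes both $[\mQ(E[3]):\mQ(x(E[3]))]=2$ and $K\neq\mQ$ work — and (b) making the phrase "explicitly computable" precise by exhibiting $x(P)$ as the rational root of $\psi_3$ attached to the rational $3$-isogeny. I would also remark that the choice of $K$ is not canonical: the other diagonal character $\chi'$ gives a second quadratic field with the same property, whereas the third quadratic subfield of $\mQ(E[3])$, namely $\mQ(\zeta_3)$ (the fixed field of $\ker\det\rho_{E,3}$, which contains $-I$), already lies inside $\mQ(x(E[3]))$ and cannot serve as $K$; accordingly one simply fixes $K=\mQ(P)$.
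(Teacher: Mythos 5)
Your proposal is correct and follows essentially the same route as the paper: identify the rational $x$-coordinate $\alpha = x(P)$ of a $3$-torsion point spanning the Galois-stable line, set $K = \mQ(\sqrt{f(\alpha)}) = \mQ(P)$, and show $K$ is quadratic, lies in $\mQ(E[3])$, and is not contained in $\mQ(x(E[3]))$, forcing the compositum identity from a degree count. The only notable difference is cosmetic: you deduce $x(P)\in\mQ$ directly from Galois-stability of $\langle P\rangle$ rather than (as the paper does) from the fact that $S_4$ has no transitive subgroup of order $6$ forcing $\psi_3$ to have a rational root, and you spell out the fixing subgroup $\{\pm I\}$ and the role of $-I$ in showing $K\not\subseteq\mQ(x(E[3]))$, a step the paper simply labels ``clear.''
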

Before we prove Proposition \ref{33.1}, we prove the following lemma which tells us over which extension $E$ obtains a $3$-torsion point.
\begin{lemma}\label{33.2}
For $E/\mQ$ from Proposition \ref{33.1}, there exists some quadratic extension $K$ such that $E$ has a $K$-rational $3$-torsion point. In particular, $E(K)[3] = \langle P \rangle$. 
\end{lemma}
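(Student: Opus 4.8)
The plan is to exploit the fact that $\rho_{E,3}(G_{\mQ})$ is conjugate to the Borel $B(3)$, so after a choice of basis $\{P_0, Q_0\}$ of $E[3]$ the image consists of upper-triangular matrices $\begin{psmallmatrix} * & * \\ 0 & * \end{psmallmatrix} \in \GL_2(\mF_3)$. The line $\langle P_0 \rangle$ spanned by the first basis vector is then $G_{\mQ}$-stable, so the Galois action on $\langle P_0 \rangle \cong \mF_3$ is given by a character $\varepsilon\colon G_{\mQ} \to \mF_3^\times \cong \{\pm 1\}$. First I would identify this character: since $\mF_3^\times$ has order $2$, $\varepsilon$ cuts out (at worst) a quadratic extension $K := \overline{\mQ}^{\ker \varepsilon}$ of $\mQ$, explicitly $K = \mQ(\sqrt{D})$ for an explicitly computable $D$ (read off, e.g., from the division polynomial of $E$, namely $D$ can be taken to be the discriminant of the cubic $\psi_3(x)/(\text{linear factor})$ or the appropriate resolvent — this is exactly the ``explicitly computable'' quadratic field in Proposition \ref{33.1}).

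Next I would argue that $P_0$ (or a suitable $\mF_3$-multiple of it, which does not matter since $\langle P_0 \rangle = \langle 2P_0 \rangle$) becomes $K$-rational. Indeed, for $\sigma \in G_K = \ker\varepsilon$ we have $\sigma(P_0) = \varepsilon(\sigma) P_0 = P_0$, so $P_0 \in E(K)$, and $P_0$ has order $3$ since it is a nonzero point of $E[3]$. Setting $P := P_0$ gives $\langle P \rangle \subseteq E(K)[3]$.

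For the ``in particular'' claim $E(K)[3] = \langle P \rangle$, I would rule out $E(K)[3] = E[3]$: if all of $E[3]$ were $K$-rational then $\mQ(E[3]) \subseteq K$ would be (at most) quadratic, forcing $\rho_{E,3}(G_{\mQ})$ to have order dividing $2$; but a Borel-type image containing a point of order $3$ with nontrivial mod-$3$ cyclotomic character (equivalently, $\zeta_3 \notin \mQ$, so the determinant character is surjective onto $\mF_3^\times$) has order divisible by $2$ and, generically for non-CM $E$, the full Borel has order $12$ — in any case $\rho_{E,3}(G_{\mQ})$ is strictly larger than a group of order $2$ whenever $E$ does not already have full rational $3$-torsion, which is impossible over $\mQ$ by Mazur. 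Hence $E(K)[3]$ is the cyclic group $\langle P \rangle$ of order $3$.

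The main obstacle is the bookkeeping in pinning down the explicit quadratic field $K = \mQ(\sqrt{D})$ and checking it is independent of the (non-canonical) choice of stable line when the image is not the full Borel — i.e. verifying that the character $\varepsilon$ giving the action on the stable line is well-defined up to the data we want. Everything else (the vanishing $\sigma(P_0) = P_0$ on $G_K$, the order count, and the Mazur-type exclusion of full rational $3$-torsion) is routine.
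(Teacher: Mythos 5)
Your proof is correct and reaches the same conclusion, but by a genuinely different route from the paper. The paper works entirely with the $3$-division polynomial $\psi_3(x) = 3x^4 + 6Ax^2 + 12Bx - A^2$: it argues that since $\Gal(\mQ(x(E[3]))/\mQ)$ has order $6$ and $S_4$ has no transitive subgroup of order $6$, the quartic $\psi_3$ must split off a rational linear factor $(x-\alpha)$, producing the explicit $3$-torsion point $P = (\alpha,\sqrt{f(\alpha)})$ defined over $K = \mQ(\sqrt{f(\alpha)})$. You instead argue purely representation-theoretically: the Borel image fixes a unique line $\langle P_0\rangle$, the action on that line is a character $\varepsilon\colon G_{\mQ}\to\mF_3^\times$, and $K$ is the fixed field of $\ker\varepsilon$. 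Interestingly, the paper itself remarks immediately afterward (in the paragraph preceding Example \ref{6.3.1}) that examining the upper-left entry of the Borel ``gives a less explicit proof of Proposition \ref{33.1}'' — that is precisely your argument. What the paper's approach buys is an explicit equation for $K$ directly from the Weierstrass data; what yours buys is a cleaner conceptual picture that generalizes verbatim to other $\ell$ and to proper subgroups of $B(\ell)$, which is exactly the template the paper uses later in Section \ref{S6}. Two small remarks on your write-up: (i) your worry about the stable line being non-canonical is moot here, since the hypothesis is that the image equals the full Borel $B(3)$, which has a unique invariant line; (ii) the appeal to Mazur in the ``in particular'' step is superfluous — the contradiction $|B(3)| = 12 > 2$ already suffices, and Mazur's theorem concerns torsion over $\mQ$, not over a quadratic field, so it is not quite the right tool even as a secondary argument.
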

\begin{proof}
Let $E\colon y^2 = x^3 - Ax - B$ for $A,B\in \mQ$. Via the Weil-pairing, we know that $\mQ(\zeta_3) \subseteq \mQ(E[3])$. It is also a well known fact that $B(3) \iso S_3 \times \ZZ{2}$. Combining these results with our assumptions, we have the following diagram of Galois sub-fields of $\mQ(E[3])$:
\begin{figure*}[h!]
$$
\xymatrix{
\mQ(E[3])  \\
\mQ(x(E[3])) \ar@{-}[u]^{2}\\
\mQ(\zeta_3)\ar@{-}[u]^{3}\\
\mQ \ar@{-}[u]^{2} 
}
$$
\end{figure*}
where the extension $\mQ(x(E[3]))$ is the index $2$ sub-field of $\mQ(E[3])$ generated by the $x$-coordinates of points in $E(\overline{\mQ})[3]$. Recall that the roots of the $3$-division polynomial 
$$\psi_3(x) = 3x^4 + 6Ax^2 + 12Bx -A^2$$
correspond to $x$-coordinates of $E(\overline{\mQ})[3]$. In particular, $\psi_3(x)$ is the minimal polynomial of the degree $6$, Galois extension $\mQ(x(E[3]))$. 

Since $S_4$ does not contain any transitive subgroups of order 6, we know that $\psi_3(x)$ must have a linear factor, so we write $\psi_3(x)  = (x-\alpha)g(x)  $ where $\alpha \in \mQ $ and $g(x)$ is an irreducible cubic. This implies that there exists some $P\in E(\overline{\mQ})[3]$ with $\mQ$-rational $x$-coordinate given by $\alpha$. Moreover, we see that there is a 3-torsion point 
$$P = (\alpha , \sqrt{f(\alpha)}).$$
that is defined over the quadratic extension $\mQ(\sqrt{f(\alpha)})$.
\end{proof}
\begin{remark}\label{6.3}
From the above proof, one can easily see that $\Gal(\mQ(x(E[3]))/\mQ) \iso S_3$. Indeed, since $\mQ(x(E[3]))$ is Galois, we showed that the Galois group of $\psi_3(x)$ is actually the Galois group of the cubic $g(x)$. Since $[\mQ(x(E[3])) : \mQ] = 6$, we know $g(x)$ must be an irreducible cubic with non-square discriminant, which immediately implies our claim. 
\end{remark}
\begin{proof}[Proof of Proposition \ref{33.1}]
Let $K$ denote the quadratic extension from Lemma \ref{33.2}. It is clear that $K \subset \mQ(E[3])$ and that $K \nsubseteq \mQ(x(E[3]))$, so we have $\mQ(E(3))$ is the compositum of $\mQ(x(E[3]))$ and $K$.
\end{proof}

The idea behind finding elliptic curves over $\mQ$ such that $E(\mQ)[\ell] = \brk{\cO}$ and $E(K)[\ell] = \langle P \rangle$ is to consider $E/\mQ$ with $\rho_{E,\ell}(G_{\mQ})$ conjugate to a subgroup $H$ such that
$$  
\begin{psmallmatrix} 1 & * \\ 0 & *\end{psmallmatrix} \subsetneq H \subseteq 
\begin{psmallmatrix} * & * \\ 0 & *\end{psmallmatrix} =: B(\ell) .
$$
We can see that $E$ will attain an $\ell$ torsion point over an extension $K$ where the degree of $K/\mQ$ is determined the cardinality of the upper left entry. For $\ell = 3$, we saw that $H = B(3)$ and thus the upper left entry has order 2, which gives a less explicit proof of Proposition \ref{33.1}.

Let $\ell \in \brk{5,13}$. Below, we provide examples of elliptic curves $E/\mQ$ that do not have a $\mQ$-rational point of order $\ell$ but attain a $K$-rational point $P$ of order $\ell$ over some extension of small degree $K$ that satisfies the conditions of Corollary \ref{D}. The final step in our verification is showing $P$ is not contained in the kernel of reduction modulo $\fl$; in particular, this means that $E/K$ is not supersingular modulo $\fl$ if $\ord_{\fl}(j_E) \geq 0$. This condition is computable via the \textsc{Magma} command \texttt{IsSupersingular}. 

In order to conduct a thorough search, we consider all subgroups $H$ which can occur as an image of Galois for a non-CM $E/\mQ$ and satisfy the above containment. In particular, we run through a large list elliptic curves $E/\mQ$ with prescribed non-surjective mod $\ell$ image of Galois coming from the modular curves $X_H$ of Zywina \cite{zywinapossible}. Since this list is comprehensive, we also give examples of elliptic curves over $\mQ$ that do not satisfy and potentially satisfy Corollary \ref{D}, modulo some computations.

For $\ell = 5$, we only have one example.

\begin{exam}[$\ell = 5$]\label{6.3.1}
Let $E/\mQ$ be the elliptic curve
$$E\colon y^2 = f(x) = x^3 - \frac{185193}{185193}x + \frac{185193}{149}.$$
$E $ has mod 5 image of Galois conjugate to $B(5) \subset \GL_2(\mF_5)$, and hence $E$ attains a $K$-rational point of order $5$ over a bi-quadratic extension $K$ of $\mQ$. The first quadratic extension $L/\mQ$ is given by adjoining the quadratic root $\alpha$ of the $5$-division polynomial $\psi_5$, and then the second quadratic is given by adjoining the square root of the $f(\alpha)$. For $E$ defined above, we compute that $\cl(K) = 8$, $\zeta_5\notin K$, $2$ is ramified in $\cO_K$, and that $E/K$ is not supersingular modulo $\fl$ if $\ord_{\fl}(j_E) \geq 0$ where $\fl | 5$. Therefore, the elliptic curve $E$ and the number field $K$ satisfy the conditions of Corollary \ref{D}.
\end{exam}

For $\ell = 7$, we have two possibilities.

\begin{almexam}[$\ell = 7$]\label{6.3.2}
Let $E/\mQ$ be the elliptic curve
$$E\colon y^2 = f(x) = x^3 - \frac{81469949623875}{3017401762489}x + \frac{162939899247750}{3017401762489},$$
which has mod $7$ image conjugate to $B(7)$. $E$ attains a $K$-rational point of order $7$ over an extension $K$ of degree 6. The extension $K$ is given by first adjoining the root $\alpha$ of the cubic factor of $\psi_7$ and then adjoining the square root of $f(\alpha)$. We verify almost all of the conditions from Corollary \ref{D} for $E$ and $K$; however, we are not able to verify that $7 \nmid \cl(K) $.
\end{almexam}

\begin{nonexam}[$\ell = 7$]\label{6.3.3}
Suppose that $E/\mQ$ has $\rho_{E,7}(G_{\mQ})$ conjugate to
$$
H:= \begin{psmallmatrix} a^2 & * \\ 0 & *\end{psmallmatrix} \quad \mwhere a\in \mF_7 .
$$
Since $\#(\mF_7\unit)^2 = 3$, we have that $E$ attains a $K$-rational point of order $7$ over a cubic extension $K$. Moreover, this extension is given adjoining the root of the cubic factor of the $7$-division polynomial $\psi_7$. In our search, we find that all $E/K$ are supersingular modulo $\fl$ if $\ord_{\fl}(j_E) \geq 0$ where $\fl | 7$.

%As an example, consider the elliptic curve
%$$E\colon y^2 = x^3 - 62165022121601446635x + 158527810195002005873274264294.$$
%Using \textsc{Magma}, we compute that $[\mQ(E[7]) : \mQ] = 126$ and that the degree of the splitting field of $\psi_7$ is $126$. This implies that all of the $7$-division points of $E$ are determined by their $x$-coordinates. Now let $K/\mQ$ denote the number field defined by the cubic factor of $\psi_7$. For notations sake, we shall write $K =\mQ(\alpha)$ where $\alpha$ is the primitive element of $K$. By base changing to $K$, we find that $E_K = E\times_{\mQ} K$ has $K$-rational $7$-torsion point. We also check that $\cl (K) = 1$, $\zeta_{7} \notin F$, $2$ and $7$ split in $\cO_K$, and that  Therefore, the elliptic curve $E$ and the number field $K$ satisfy the conditions of Theorem \ref{D}.
\end{nonexam}

For $\ell = 11$, there do not exist any subgroups coming from \cite{zywinapossible} that have our desired condition. For $\ell = 13$, we find a few examples of curves satisfying Corollary \ref{D}.

\begin{exam}[$\ell = 13$]
Suppose that $E/\mQ$ has $\rho_{E,13}(G_{\mQ})$ conjugate to 
$$
H =\begin{psmallmatrix} a^3 & * \\ 0 & *\end{psmallmatrix} \quad \mwhere a\in \mF_{13},
$$
then $E$ attains a $K$-rational point of order 13 over a bi-quadratic extension $K/\mQ$ since $\#(\mF_{13}\unit)^3 = 4$. As an example, consider the elliptic curve
$$E\colon y^2 = x^3 - \frac{2248091}{180353}x + \frac{4496182}{180353},$$
which has mod 13 image conjugate to $H$. $E$ attains a $K$-rational point of order $13$ over a bi-quadratic extension $K$ of $\mQ$. The first quadratic extension $L/\mQ$ is given by adjoining a quadratic root $\alpha$ of the $13$-division polynomial $\psi_{13}$, and then the second quadratic is given by adjoining the square root of the $f(\alpha)$. We compute that $\cl(K) = 2$, $\zeta_{13}\notin K$, $(2)$ splits in $\cO_K$, and $E/K$ is not supersingular modulo $\fl$ if $\ord_{\fl}(j_E) \geq 0$ where $\fl | 13$. Therefore, the elliptic curve $E$ and the number field $K$ satisfy the conditions of Corollary \ref{D}.
\end{exam}

\begin{exam}[$\ell = 13$]\label{6.3.6}
Suppose that $E/\mQ$ has $\rho_{E,13}(G_{\mQ})$ conjugate to 
$$
H := \begin{psmallmatrix} a^4 & * \\ 0 & *\end{psmallmatrix} \quad \mwhere a\in \mF_{13} .
$$
Since $\#(\mF_{13}\unit)^4 = 3$, $E$ attains a $K$-rational point of order 13 over cubic extension $K/\mQ$. For example, consider the elliptic curve $$E\colon y^2 = x^3 + 13674069x + 324405221670.$$
Using \cite{zywinapossible}, $E$ has mod 13 image conjugate to $H$. Now let $K/\mQ$ denote the number field defined by the cubic factor of $\psi_{13}$. For notational purposes, we shall write $K =\mQ(\alpha)$ where $\alpha$ is the primitive element of $K$. By base changing to $K$, we find that $E_K = E\times_{\mQ} K$ has $K$-rational $13$-torsion point. We also compute that $\cl (F) = 1$, $2$ splits in $\cO_K$, $\zeta_{13} \notin K$, and that $E/K$ is not supersingular modulo $\fl$ if $\ord_{\fl}(j_E) \geq 0$ where $\fl | 13$. Therefore, the elliptic curve $E$ and the number field $K$ satisfy the conditions of Corollary \ref{D}.
\end{exam}

\begin{exam}[$\ell = 13$]
Suppose that elliptic curve with $\rho_{E,13}(G_{\mQ})$ conjugate to
$$
\begin{psmallmatrix} a^2 & * \\ 0 & *\end{psmallmatrix} \quad \mwhere a\in \mF_{13} .
$$
Since $\#(\mF_{13}\unit)^2 = 6$, $E$ will attain a $K$-rational point of order $13$ over an extension of degree 6. As an example, consider the elliptic curve
$$E\colon y^2 = x^3 - \frac{12096}{529}x + \frac{24192}{529},$$
which satisfies the above property. $E$ attains a $K$-rational point of order $13$ over a sextic extension $K$ of $\mQ$. The first cubic extension $L/\mQ$ is given by adjoining a cubic root $\alpha$ of the $13$-division polynomial $\psi_{13}$, and then the second quadratic is given by adjoining the square root of the $f(\alpha)$. We also compute that $\cl (F) = 4$, $2$ splits in $\cO_K$, $\zeta_{13} \notin K$, and that $E/K$ is not supersingular modulo $\fl$ if $\ord_{\fl}(j_E) \geq 0$ where $\fl | 13$. Therefore, the elliptic curve $E$ and number field $K$ satisfy the conditions of Corollary \ref{D}.
\end{exam}

\begin{almexam}[$\ell = 13$]\label{6.3.4}
Suppose that elliptic curve $E/\mQ$ with mod 13 image conjugate to $B(13)$ will attain a $K$-rational point of order 13 over an extension of degree 12. The difficultly in verifying the conditions of Corollary \ref{D} is computing the class number and ramification indicies for the duodecic extension $K$.
\end{almexam}

Finally for $\ell = 37$, there is only one $E/\overline{\mQ}$ that we need to consider.
\begin{almexam}[$\ell = 37$]\label{6.3.7}
Suppose that $E/\mQ$ is the elliptic curve with $j$-invariant $-7\cdot 11^3$, which has affine equation
$$E\colon y^2 = x^3 - \frac{251559}{11045}x + \frac{503118}{11045}.$$
From \cite[Theorem~1.10.(ii)]{zywinapossible}, we know that the mod 37 image of $E$ is conjugate to 
$$
H := \begin{psmallmatrix} a^3 & * \\ 0 & *\end{psmallmatrix} \quad \mwhere a\in \mF_{37} .
$$
Since $\#(\mF_{37}\unit)^3 = 12$, $E$ attains a $K$-rational point of order 37 over a duodecic extension $K/\mQ$. 
%Let 
%\begin{align*}
%g(x) = \, & x^6 - 6930/47x^5 + 19278567/11045x^4 - 4034000124/519115x^3 + \\
 %& 1822275535311/121992025x^2 - 11494299736458/1146725035x - \\  & 1209380973422391/1347401916125
%\end{align*}
%denote a sextic factor of $\psi_{37}$ and let $\beta$ denote a root of $g(x)$. Then the extension $K$ is given by quadratic extension of $\mQ(\beta)$ defined by $\sqrt{f(\beta)}$. Moreover, the $K$-rational point of order 37 is given by
%$$(\beta , \sqrt{f(\beta)}) \in E(K)[37].$$
As before, the difficultly in verifying the conditions of Corollary \ref{D} is computing the class number and ramification indicies for the duodecic extension $K$.
\end{almexam}

\bibliographystyle{amsalpha}
\def\bibfont{\small}
\bibliography{Morrow_SelmerTwist.bbl} 
\end{document}